\def\maketimestamp{\count255=\time
\divide\count255 by 60\relax
\edef\thetime{\the\count255:}%
\multiply\count255 by-60\relax
\advance\count255 by\time
\edef\thetime{\thetime\ifnum\count255<10 0\fi\the\count255}
\edef\thedate{\number\day-\ifcase\month\or Jan\or Feb\or Mar\or
             Apr\or May\or Jun\or Jul\or Aug\or Sep\or Oct\or
             Nov\or Dec\fi-\number\year}
\def\timstamp{\hbox to\hsize{\tt\hfil\thedate\hfil\thetime\hfil}}}
\numberwithin{equation}{section}  
\newtheorem{theorem}{Theorem}[section]
\newtheorem{lemma}[theorem]{Lemma}
\newtheorem{proposition}[theorem]{Proposition}
\theoremstyle{definition}
\newtheorem{definition}{Definition}[section]
\newtheorem{example}{Example}
\newtheorem{setup}{Setup} 
\theoremstyle{remark}
\newtheorem{remark}{Remark}
\newcommand{\bitem}{\begin{itemize}}
\newcommand{\eitem}{\end{itemize}}
\newcommand{\benum}{\begin{enumerate}}
\newcommand{\eenum}{\end{enumerate}}
\newcommand{\beq}{\begin{equation}}
\newcommand{\eeq}{\end{equation}}
\newcommand{\ip}[2]{\langle#1,#2\rangle}
 \def\NN{\mathbb{N}}
 \def\RR{\mathbb{R}}
\newcommand\cC{{\mathcal{C}}}
\newcommand\cD{{\mathcal{D}}}
\newcommand\cE{{\mathcal{E}}}
\newcommand\cP{{\mathcal{P}}}
\newcommand\cH{\mathcal{H}}
\newcommand\cL{{\mathcal{L}}}
\newcommand\cQ{{\mathcal{Q}}}
\newcommand\cS{{\mathcal{S}}}
\newcommand\SH{SH}
\DeclareMathOperator{\Span}{span} %
\DeclareMathOperator{\supp}{supp} %
\DeclareMathOperator{\diagonal}{diag} %
\DeclareMathOperator{\area}{area} %
\newcommand{\diag}[1]{\diagonal{(#1)}}
\newcommand{\expo}[1]{\mathrm{e}^{#1}} 
\newcommand{\are}[1]{\area{(#1)}}
\newcommand{\card}[1]{\# \abs{#1}}
\newcommand{\cardsmall}[1]{\# \abssmall{#1}}
\newcommand{\eps}{\ensuremath{\varepsilon}}
\newcommand*{\numbersys}[1]{\ensuremath{\mathbb{#1}}}
\newcommand*{\C}{\numbersys{C}}
\newcommand*{\R}{\numbersys{R}}
\newcommand*{\Z}{\numbersys{Z}}
\newcommand*{\N}{\numbersys{N}}
\newcommand{\itvcc}[2]{\ensuremath{\left[{#1},{#2}\right]}} %
\newcommand{\itvccs}[2]{\ensuremath{\lbrack{#1},{#2}\rbrack}} %
\newcommand{\abs}[1]{\ensuremath{\left\lvert#1\right\rvert}}
\newcommand{\abssmall}[1]{\ensuremath{\lvert#1\rvert}}
\newcommand{\absbig}[1]{\ensuremath{\bigl\lvert#1\bigr\rvert}}
\newcommand{\absBig}[1]{\ensuremath{\Bigl\lvert#1\Bigr\rvert}}
\newcommand{\norm}[2][]{\ensuremath{\left\lVert#2\right\rVert_{#1}}}
\newcommand{\normBig}[2][]{\ensuremath{\Bigl\lVert#2\Bigr\rVert_{#1}}}
\newcommand{\normsmall}[2][]{\ensuremath{\lVert#2\rVert_{#1}}}
\newcommand{\innerprod}[3][]{\ensuremath{\left\langle #2,#3\right\rangle_{\! #1}}}
\newcommand{\innerprods}[2]{\ensuremath{\langle #1,#2\rangle}}
\newcommand{\setprop}[2]{\ensuremath{\left\lbrace{#1} : {#2}\right\rbrace}}
\newcommand{\setpropbig}[2]{\ensuremath{\bigl\lbrace{#1} :
      {#2}\bigr\rbrace}}
\newcommand{\ceil}[1]{\left\lceil #1 \right\rceil}
\newcommand{\ceilsmall}[1]{\lceil #1 \rceil}
\newcommand{\pa}{\partial}
\newcommand{\ie}{i.e.,\xspace} 
\newcommand{\eg}{e.g.,\xspace} 
\newcommand{\scp}{\ensuremath\alpha}
\newcommand{\cp}{\ensuremath\nu}
\newcommand{\D}{\ensuremath\mathrm{d}}
\newcommand{\E}{\ensuremath\mathrm{e}}
\def\blfootnote{\xdef\@thefnmark{}\@footnotetext} 
\def\subjclass{\xdef\@thefnmark{}\@footnotetext}
\long\def\symbolfootnote[#1]#2{\begingroup%
\def\thefootnote{\fnsymbol{footnote}}\footnote[#1]{#2}\endgroup} 
  \renewenvironment{abstract}{%
      \titlepage
      \null\vfil
      \@beginparpenalty\@lowpenalty
      \begin{center}%
        \bfseries \abstractname
        \@endparpenalty\@M
      \end{center}}%
     {\par\vfil\null\endtitlepage}
  \renewenvironment{abstract}{%
      \if@twocolumn
        \section*{\abstractname}%
      \else
        \small
        \list{}{%
          \settowidth{\labelwidth}{\textbf{\abstractname:}}
          \setlength{\leftmargin}{50pt}
          \setlength{\rightmargin}{50pt}
          \setlength{\itemindent}{\labelwidth}
          \addtolength{\itemindent}{\labelsep}
        }
        \item[\textbf{\abstractname:}]

      \fi}
      {\if@twocolumn\else\endlist\fi}
\begin{document}

\title{Shearlets and Optimally Sparse Approximations\footnote{book chapter in \emph{Shearlets: Multiscale Analysis for Multivariate Data}, Birkh\"auser-Springer.}}
\author{Gitta Kutyniok\footnote{Institute of Mathematics, University
    of Osnabr\"uck, 49069 Osnabr\"uck, Germany, E-mail:
\url{kutyniok@math.uni-osnabrueck.de}}, Jakob Lemvig\footnote{Institute of Mathematics, University of Osnabr\"uck,
    49069 Osnabr\"uck, Germany, E-mail:
    \url{jlemvig@math.uni-osnabrueck.de}}, and Wang-Q
    Lim\footnote{Institute of Mathematics, University of Osnabr\"uck,
49069 Osnabr\"uck,~Germany, E-mail: \url{wlim@math.uni-osnabrueck.de}}}

\date{\today}

\maketitle 
\subjclass{2010 {\it Mathematics Subject Classification.} Primary 42C40; Secondary 42C15, 41A30, 94A08.}

\thispagestyle{plain}
\begin{abstract} 
Multivariate functions are typically governed by an\-iso\-tro\-pic features such as edges
in images or shock fronts in solutions of transport-dominated equations. One major goal both
for the purpose of compression as well as for an efficient analysis is the provision of optimally
sparse approximations of such functions. Recently, cartoon-like images were introduced in 2D and
3D as a suitable model class, and approximation properties were measured by considering the
decay rate of the $L^2$ error of the best $N$-term approximation. Shearlet systems are to date
the only representation system, which provide optimally sparse approximations of this model class
in 2D as well as 3D. Even more, in contrast to all other directional representation systems, a
theory for compactly supported shearlet frames was derived which moreover also satisfy this
optimality benchmark. This chapter shall serve as an introduction to and a survey about sparse
approximations of cartoon-like images by band-limited and also compactly supported shearlet frames
as well as a reference for the state-of-the-art of this research field.
\end{abstract}

\section{Introduction}
\label{sec:introduction}

Scientists face a rapidly growing deluge of
data, which requires highly sophisticated methodologies for analysis
and compression. Simultaneously, the complexity of the data is
increasing, evidenced in particular by the observation that data
becomes increasingly high-dimensional. One of the most prominent
features of data are singularities which is justified, for instance,
by the observation from computer visionists that the human eye is
most sensitive to smooth geometric areas divided by sharp edges.
Intriguingly, already the step from univariate to multivariate data
causes a significant change in the behavior of singularities.
Whereas one-dimensional
  (1D) functions can only exhibit point singularities, singularities
of two-dimensional (2D) functions
can already be of both point as well as curvilinear type. Thus, in contrast to {\em isotropic} features -- point singularities --,
suddenly {\em anisotropic} features -- curvilinear singularities -- are possible. And, in fact, multivariate functions are
typically governed by {\em anisotropic phenomena}. Think, for instance, of edges in digital images or evolving shock fronts in
solutions of transport-dominated equations. These two exemplary situations also show that such phenomena occur even for both
explicitly as well as implicitly given data.

One major goal both for the purpose of compression as well as for an efficient analysis is the introduction of representation
systems for `good' approximation of anisotropic phenomena, more precisely, of multivariate functions governed by anisotropic
features. This raises the following fundamental questions:
\begin{enumerate}
\item[(P1)] What is a suitable model for functions governed by anisotropic features?
\item[(P2)] How do we measure `good' approximation and what is a benchmark for optimality?
\item[(P3)] Is the step from 1D to 2D already the crucial step or how does this framework scale with increasing dimension?
\item[(P4)] Which representation system behaves optimally?
\end{enumerate}

Let us now first debate these questions on a higher and more intuitive level, and later on delve into the precise mathematical formalism.

\subsection{Choice of Model for Anisotropic Features}

Each model design has to face the trade-off between closeness to the true situation versus sufficient simplicity to enable
analysis of the model. The suggestion of a suitable model for functions governed by anisotropic features in \cite{Don99}
solved this problem in the following way. As a model for an image, it first of all requires the $L^2(\R^2)$ functions
serving as a model to be supported on the unit square $[0,1]^2$. These functions shall then consist of the minimal
number of smooth parts, namely two. To avoid artificial problems with a discontinuity ending at the boundary of $[0,1]^2$,
the boundary curve of one of the smooth parts is entirely contained in $(0,1)^2$. It now remains to decide upon the
regularity of the smooth parts of the model functions and of the boundary curve, which were chosen to both be $C^2$.
Thus, concluding, a possible suitable model for functions governed by anisotropic features are 2D functions which are
supported on $[0,1]^2$ and $C^2$ apart from a closed $C^2$ discontinuity curve; these are typically referred to as
{\em cartoon-like images} (cf. chapter \cite{Intro}). This provides an answer to (P1). Extensions of this 2D model to
piecewise smooth curves were then suggested in \cite{CD04}, and extensions to 3D as well as to different types of
regularity were introduced in \cite{GL10_3d,KLL10}.

\subsection{Measure for Sparse Approximation and Optimality}

The quality of the performance of a representation system with respect
to cartoon-like images is typically measured by taking a non-linear
approximation viewpoint. More precisely, given a cartoon-like image
and a representation system which forms an orthonormal basis, the
chosen measure is the asymptotic behavior of the $L^2$ error of the
best $N$-term (non-linear) approximation in the number of terms $N$.
This intuitively measures how fast the $\ell^2$ norm of the tail of
the expansion decays as more and more terms are used for the
approximation. A slight subtlety has to be observed if the
representation system does not form an orthonormal basis, but a frame.
In this case, the $N$-term approximation using the $N$ largest
coefficients is considered which, in case of an orthonormal basis, is
the same as the best $N$-term approximation, but not in general.
The term `optimally sparse approximation' is then awarded to those
representation systems which deliver the fastest possible decay rate
in $N$ for all cartoon-like images, where we consider log-factors as
negligible, thereby providing an answer to (P2).

\subsection{Why is 3D the Crucial Dimension?}
\label{subsec:3D}

We already identified the step from 1D to 2D as crucial for the appearance of anisotropic features at all. Hence one
might ask: Is is sufficient to consider only the 2D situation, and higher dimensions can be treated similarly? Or:
Does each dimension causes its own problems? To answer these questions, let us consider the step from 2D to 3D which
shows a curious phenomenon. A 3D function can exhibit point (= 0D), curvilinear (= 1D), and surface (= 2D) singularities.
Thus, suddenly anisotropic features appear in two different dimensions: As one-dimensional and as two-dimensional
features. Hence, the 3D situation has to be analyzed with particular care. It is not at all clear whether two different
representation systems are required for optimally approximating both types of anisotropic features simultaneously, or
whether one system will suffice. This shows that the step from 2D to 3D can justifiably be also coined `crucial'.
Once it is known how to handle anisotropic features of different dimensions, the step from 3D to 4D can be dealt
with in a similar way as also the extension to even higher dimensions. Thus, answering (P3), we conclude that the
two crucial dimensions are 2D and 3D with higher dimensional situations deriving from the analysis of those.

\subsection{Performance of Shearlets and Other Directional Systems}

Within the framework we just briefly outlined, it can be shown that wavelets do not provide optimally sparse approximations
of cartoon-like images. This initiated a flurry of activity within the applied harmonic analysis community with the aim to
develop so-called {\em directional} representation systems which satisfy this benchmark, certainly besides other desirable
properties depending in the application at hand. In 2004, Cand\'{e}s and Donoho were the first to introduce with the
tight curvelet frames a directional representation system which provides provably optimally sparse approximations
of cartoon-like images in the sense we discussed. One year later, contourlets were introduced by Do and Vetterli \cite{DV05},
which similarly derived an optimal approximation rate. The first analysis of the performance of (band-limited) shearlet
frames was undertaken by Guo and Labate in \cite{GL07}, who proved that these shearlets also do satisfy this benchmark. In the
situation of (band-limited) shearlets the analysis was then driven even further, and very recently Guo and Labate proved a
similar result for 3D cartoon-like images which in this case are defined as a function which is $C^2$ apart from a
$C^2$ discontinuity surface, i.e., focusing on only one of the types of anisotropic features we are facing in 3D.

\subsection{Band-Limited Versus Compactly Supported Systems}

The results mentioned in the previous subsection only concerned band-limited systems. Even in the contourlet
case, although compactly supported contourlets seem to be included, the proof for optimal sparsity only works
for band-limited generators due to the requirement of infinite directional vanishing moments. However, for various
applications compactly supported generators are inevitable, wherefore already in the wavelet case the introduction
of compactly supported wavelets was a major advance. Prominent examples of such applications are imaging sciences, when
an image might need to be denoised while avoiding a smoothing of the edges, or in the theory of partial differential
equations as a generating system for a trial space in order to ensure fast computational realizations.

So far, shearlets are the only system, for which a theory for compactly supported generators has been developed
and compactly supported shearlet frames have been constructed \cite{KKL10a}, see also the survey paper \cite{KLL10_2}.
It should though be mentioned that these frames are somehow close to being tight, but at this point it is not clear
whether also compactly supported {\em tight} shearlet frames can be constructed. Interestingly, it was proved in
\cite{KL10} that this class of shearlet frames also delivers optimally sparse approximations of the 2D cartoon-like
image model class with a very different proof than \cite{GL07} now adapted to the particular nature of compactly
supported generators. And with \cite{KLL10} the 3D situation is now also fully understood, even taking the two
different types of anisotropic features -- curvilinear and surface singularities -- into account.

\subsection{Outline}

In Sect.~\ref{sec:cartoon-images}, we introduce the 2D and 3D cartoon-like image model class. Optimality of
sparse approximations of this class are then discussed in Sect.~\ref{sec:linear-non-linear}. Sect.~\ref{sec:pyram-adapt-shearl}
is concerned with the introduction of  3D shearlet systems with both band-limited and compactly supported generators,
which are shown to provide optimally sparse approximations within this class  in the final Sect.~\ref{sec:optim-sparse-appr}.

\section{Cartoon-like Image Class}
\label{sec:cartoon-images}

We start by making the in the introduction of this chapter already intuitively derived definition of cartoon-like
images mathematically precise. We start with the most basic definition of this class which was also historically
first stated in \cite{Don99}. We allow ourselves to state this together with its 3D version from \cite{GL10_3d}
by remarking that $d$ could be either $d=2$ or $d=3$.

For fixed $\mu > 0$, the {\em class $\cE^2(\R^d)$ of cartoon-like image} shall be the set of
functions $f:\mathbb{R}^d \to \mathbb{C}$ of the form
    \begin{equation*}
  f = f_0 + f_1 \chi_{B},
\end{equation*}
where $B \subset \itvcc{0}{1}^d$ and $f_i \in C^2(\mathbb{R}^d)$ with $\supp{ f_0} \subset \itvcc{0}{1}^d$ and
$\norm[C^2]{f_i} \leq \mu$ for each $i=0,1$. For dimension $d=2$, we assume that $\partial B$ is a closed $C^2$-curve
with curvature bounded by $\nu$, and, for $d=3$, the discontinuity $\partial B$ shall be a closed $C^2$-surface with
principal curvatures bounded by $\nu$.
An indiscriminately chosen cartoon-like function $f = \chi_B$, where the discontinuity surface $\pa B$ is a deformed sphere in $\R^3$,
is depicted in Fig.~\ref{fig:cartoon}.
\begin{figure}[ht!]
\centering
\includegraphics{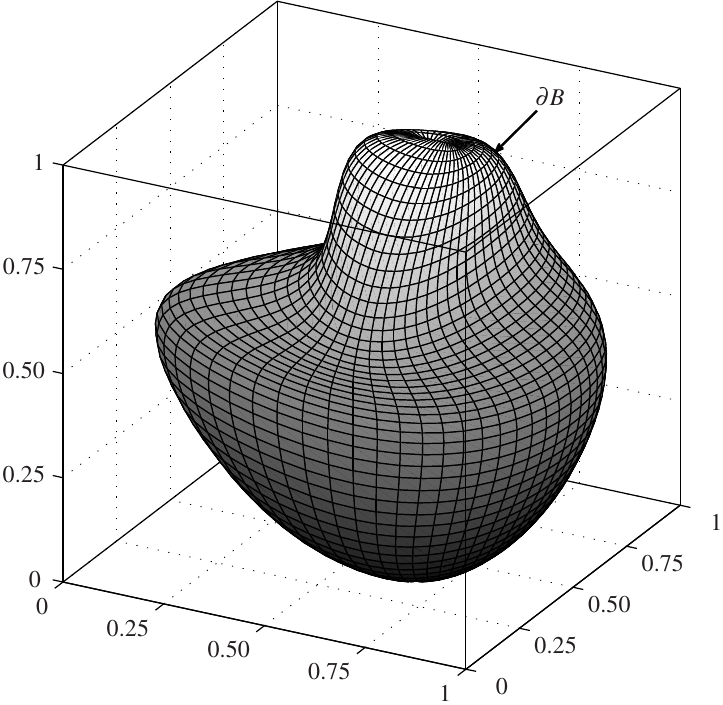}
\caption{A simple cartoon-like image $f = \chi_B \in \cE^2_L(\R^3)$
  with $L=1$ for dimension $d=3$, where the
  discontinuity surface $\pa B$ is a deformed sphere.} \label{fig:cartoon}
\end{figure}

Since `objects' in images often have sharp corners, in \cite{CD04}
for 2D and in \cite{KLL10} for 3D also less regular images were
allowed, where $\pa B$ is only assumed to be \emph{piecewise}
$C^2$-smooth. We note that this viewpoint is also essential for being
able to analyze the behavior of a system with respect to the two
different types of anisotropic features appearing in 3D; see the
discussion in Subsection \ref{subsec:3D}. Letting $L \in \N$ denote the number
of $C^2$ pieces, we speak of the extended class of {\em cartoon-like
  images $\cE^2_L(\R^d)$} as consisting of cartoon-like images having
$C^2$-smoothness apart from a piecewise $C^2$ discontinuity curve in
the 2D setting and a piecewise $C^2$ discontinuity surface in the 3D
setting. Indeed, in the 3D setting, besides the $C^2$ discontinuity
surfaces, this model exhibits curvilinear $C^2$ singularities as well
as point singularities, \eg the cartoon-like image $f = \chi_B$ in
Fig.~\ref{fig:cartoon-piecewise} exhibits a discontinuity surface $\pa
B \subset \R^3$ consisting of \emph{three} $C^2$-smooth surfaces with
point and curvilinear singularities where these surfaces meet.

\begin{figure}[ht]
\centering
\includegraphics{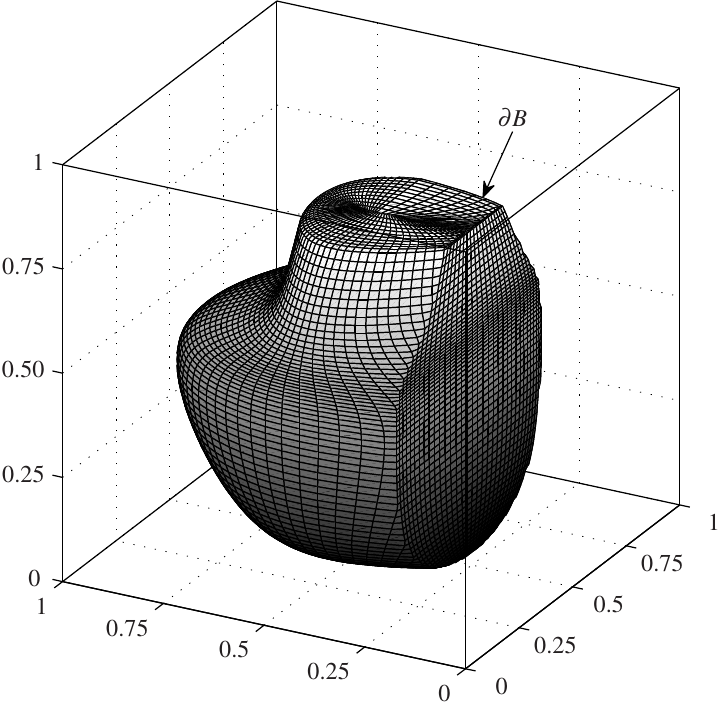}
\caption{A cartoon-like image $f = \chi_B \in \cE_L^2(\R^3)$ with $L=3$, where the
  discontinuity surface $\pa B$ is piecewise $C^2$ smooth.}
 \label{fig:cartoon-piecewise}
\end{figure}

The model in \cite{KLL10} goes even one step further and considers a
different regularity for the smooth parts, say being in $C^\beta$,
and for the smooth pieces of the discontinuity, say being in
$C^\alpha$ with $1 \le \alpha \le \beta \le 2$. This very general
class of cartoon-like images is then denoted by
$\cE^\beta_{\alpha,L}(\R^d)$, with the agreement that $\cE^2_L(\R^d)
= \cE^\beta_{\alpha,L}(\R^d)$ for $\alpha=\beta=2$.

For the purpose of clarity, in the sequel we will focus on the first
most basic cartoon-like model where $\alpha=\beta=2$, and add hints on
generalizations when appropriate (in particular, in Sect.~\ref{sec:some-extensions}).

\section{Sparse Approximations}
\label{sec:linear-non-linear}

After having clarified the model situation, 
we will now discuss which measure for the accuracy of approximation by
representation systems we choose, and what optimality means in this
case.

\subsection{(Non-Linear) $N$-term Approximations}
\label{sec:non-line-appr}

Let $\cC$ denote a given class of elements in a separable Hilbert
space $\cH$ with norm $\norm{\cdot}=\innerprod{\cdot}{\cdot}^{1/2}$ and $\Phi=(\phi_i)_{i \in I}$ a dictionary for $\cH$, \ie
$\overline{\Span}\Phi=\cH$, with indexing set $I$. The dictionary
$\Phi$ plays the role of our representation system.
Later $\cC$ will be chosen to be the class of cartoon-like images and
$\Phi$ a shearlet frame, but for now we will assume this more general
setting. We now seek to approximate each single element of $\cC$ with
elements from $\Phi$ by `few' terms of this system. Approximation
theory provides us with the concept of best $N$-term approximation
which we now introduce; for a general introduction to approximation theory, we
refer to \cite{ConApp}.

For this, let $f \in \cC$ be arbitrarily chosen. Since $\Phi$ is a
complete system, for any $\eps >0 $ there exists a finite linear
combination of elements from $\Phi$ of the form
\[g = \sum_{i \in F} c_i \phi_i \quad \text{with $F \subset I$ finite,
  \ie $\card{F}<\infty$}
\]
such that $\norm{f-g} \le \eps$. Moreover, if $\Phi$ is a frame
with countable indexing set $I$, there exists a sequence $(c_i)_{i \in
  I} \in \ell_2(I)$ such that the representation
\[
f = \sum_{i \in I} c_i \phi_i  
\]
holds with convergence in the Hilbert space norm $\norm{\cdot}$.
The reader should notice that, if $\Phi$ does not form a basis, this
representation of $f$ is certainly not the only possible one. Letting
now $N \in \NN$, we aim to approximate $f$ by only $N$ terms of
$\Phi$, i.e., by
\[
\sum_{i \in {I}_N} {c}_i \phi_i \quad \text{with } {I}_N \subset I, \, \card{{I}_N} = N, 
\]
which is termed {\em $N$-term approximation} to $f$. This approximation is typically
{\em non-linear} in the sense that if $f_N$ is an $N$-term
approximation to $f$ with indices $I_N$ and $g_N$ is an $N$-term
approximation to some $g \in \cC$ with indices $J_N$, then $f_N + g_N$
is only an $N$-term approximation to $f+g$ in case $I_N=J_N$.

But certainly we would like to pick the `best' approximation with the
accuracy of approximation measured in the Hilbert space norm. We
define the {\em best $N$-term approximation} to $f$ by the $N$-term
approximation
\[
f_N = \sum_{i \in {I}_N} {c}_i \phi_i,
\]
which satisfies that, for all $I_N \subset I$, $\card{I_N} = N$, and
for all scalars $(c_i)_{i \in I}$, 
\[
\norm{f-f_N} \le \normBig{f - \sum_{i \in I_N} c_i \phi_i}.
\]

Let us next discuss the notion of best $N$-term approximation for the
special cases of $\Phi$ forming an orthornomal basis, a tight frame,
and a general frame alongside an error estimate for the accuracy of
this approximation.

\subsubsection{Orthonormal Bases}
\label{subsubsec:ONB}

Let $\Phi$ be an orthonormal basis for $\cH$. In this case, we can
actually write down the best $N$-term approximation $f_N = \sum_{i \in
  {I}_N} {c}_i \phi_i$ for $f$. Since in this case
\[
f = \sum_{i \in I} \ip{f}{\phi_i} \phi_i,
\]
and this representation is unique, we obtain
\begin{align*}
  \|f-f_N\|_\cH &=  \Big\|\sum_{i \in I} \ip{f}{\phi_i} \phi_i - \sum_{i \in {I}_N} {c}_i \phi_i\Big\|\\
  &=  \Big\|\sum_{i \in {I}_N} [\ip{f}{\phi_i} -{c}_i] \phi_i + \sum_{i \in I \setminus {I}_N} \ip{f}{\phi_i} \phi_i\Big\|\\
  &= \|(\ip{f}{\phi_i} -{c}_i)_{i \in {I}_N}\|_{\ell^2} + \|(
  \ip{f}{\phi_i})_{i \in I \setminus {I}_N}\|_{\ell^2}.
\end{align*}
The first term $\|(\ip{f}{\phi_i} -{c}_i)_{i \in {I}_N}\|_{\ell^2}$
can be minimized by choosing ${c}_i = \ip{f}{\phi_i}$ for all $i \in
{I}_N$. And the second term $\|( \ip{f}{\phi_i})_{i \in I \setminus
  {I}_N}\|_{\ell^2}$ can be minimized by choosing ${I}_N$ to be the indices
of the $N$ largest coefficients $\ip{f}{\phi_i}$ in magnitude. Notice
that this does not uniquely determine $f_N$ since some coefficients
$\ip{f}{\phi_i}$ might have the same magnitude. But it characterizes
the set of best $N$-term approximations to some $f \in \cC$ precisely.
Even more, we have complete control of the error of best $N$-term
approximation by
\begin{equation}
 \label{eq:errorONB} \|f-f_N\| = \|( \ip{f}{\phi_i})_{i \in I
  \setminus {I}_N}\|_{\ell^2}.
\end{equation}

\subsubsection{Tight Frames}
\label{sec:tight-frames}

Assume now that $\Phi$ constitutes a tight frame with bound $A=1$ for $\cH$. In this
situation, we still have
\[
f = \sum_{i \in I} \ip{f}{\phi_i} \phi_i,
\]
but this expansion is now not unique anymore. Moreover, the frame
elements are not orthogonal. Both conditions prohibit an analysis of
the error of best $N$-term approximation as in the previously
considered situation of an orthonormal basis. And in fact, examples can be provided
to show that selecting the $N$ largest coefficients $\ip{f}{\phi_i}$
in magnitude does not always lead to the {\em best} $N$-term
approximation, but merely to {\em an} $N$-term approximation. To
be able to still analyze the approximation error, one typically -- as will
be also our choice in the sequel -- chooses the $N$-term approximation
provided by the indices ${I}_N$ associated with the $N$ largest
coefficients $\ip{f}{\phi_i}$ in magnitude with these coefficients,
i.e.,
\[
f_N = \sum_{i \in {I}_N} \ip{f}{\phi_i} \phi_i.
\]
This selection also allows for some control of the approximation in
the Hilbert space norm, which we will defer to the next subsection
in which we consider the more general case of arbitrary frames.

\subsubsection{General Frames}
\label{sec:general-frames}

Let now $\Phi$ form a frame for $\cH$ with frame bounds $A$ and $B$,
and let $(\tilde\phi_i)_{i \in I}$ denote the canonical dual frame. We
then consider the expansion of $f$ in terms of this dual frame, i.e.,
\begin{equation}\label{eq:first} f = \sum_{i \in I} \ip{f}{\phi_i}
  \tilde{\phi}_i.
\end{equation} Notice that we could also consider
\[
f = \sum_{i \in I} \ip{f}{\tilde{\phi}_i} \phi_i.
\]
Let us explain, why the first form is of more interest to us in this
chapter. By definition, we have $(\ip{f}{\tilde{\phi}_i})_{i \in I} \in \ell^2(I)$ as well
as $(\ip{f}{\phi_i})_{i \in I} \in \ell^2(I)$. Since we only consider expansions
of functions $f$ belonging to a subset $\cC$ of $\cH$, this can, at
least, potentially improve the decay rate of the coefficients so that
they belong to $\ell^p(I)$ for some $p<2$. This is exactly what is
understood by {\em sparse approximation} (also called {\em
  compressible approximations} in the context of inverse problems). We
hence aim to analyze shearlets with respect to this behavior, i.e., the decay
rate of shearlet coefficients. This then naturally leads to form
\eqref{eq:first}. We remark that in case of a tight frame, there is no
distinction necessary, since then $\tilde \phi_i = \phi_i$ for all $i
\in I$.

As in the tight frame case, it is not possible to derive a usable, explicit form
for the best $N$-term approximation. We therefore again crudely
approximate the best $N$-term approximation by choosing the $N$-term
approximation provided by the indices ${I}_N$ associated with the $N$
largest coefficients $\ip{f}{\phi_i}$ in magnitude with these
coefficients, i.e.,
\[
f_N = \sum_{i \in {I}_N} \ip{f}{\phi_i} \tilde \phi_i.
\]
But, surprisingly, even with this rather crude greedy selection procedure, we obtain very strong results for the approximation
rate of shearlets as we will see in Sect.~\ref{sec:optim-sparse-appr}.

The following result shows how the $N$-term approximation error can be bounded by the tail of the square of
the coefficients $c_i$. The reader might want to compare this result with the error in case of an orthonormal basis
stated in \eqref{eq:errorONB}.

\begin{lemma}\label{lemma:n-term-frame-approx}
Let $(\phi_i)_{i \in I}$ be a frame for $\cH$ with frame bounds $A$ and $B$, and let $(\tilde\phi_i)_{i \in I}$
be the canonical dual frame. Let $I_N \subset I$ with $\card{I_N} = N$, and let $f_N$ be the $N$-term approximation $f_N = \sum_{i \in I_N}
\innerprod{f}{\phi_i} \tilde \phi_i$. Then
\begin{equation}
\norm{f-f_N}^2 \le \frac1A \sum_{i \notin I_N} \abs{\innerprod{f}{\phi_i}}^2.\label{eq:n-term-frame-approx-bound}
\end{equation}
\end{lemma}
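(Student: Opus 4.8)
The plan is to reduce the approximation error to the synthesis of the \emph{discarded} coefficients against the dual frame, and then to control its norm by the upper frame bound of the canonical dual frame. I would organise the argument as an exact identity for the error followed by a single norm estimate.

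First I would use \eqref{eq:first}, \ie $f = \sum_{i \in I} \ip{f}{\phi_i}\tilde\phi_i$, and subtract the definition $f_N = \sum_{i \in I_N} \ip{f}{\phi_i}\tilde\phi_i$. Since the frame expansion converges unconditionally in $\cH$, linearity gives the exact identity
\[
  f - f_N = \sum_{i \notin I_N} \ip{f}{\phi_i}\,\tilde\phi_i .
\]
Thus the error is precisely the image, under the synthesis operator of the dual frame, of the tail sequence $(\ip{f}{\phi_i})_{i \notin I_N}$, and it remains only to bound the norm of this vector.

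The key step is the standard fact that the canonical dual frame $(\tilde\phi_i)_{i\in I}$ is itself a frame for $\cH$, with frame bounds $1/B$ and $1/A$. Its upper bound $1/A$ is equivalent to the synthesis estimate
\[
  \Big\| \sum_{i \in I} c_i \tilde\phi_i \Big\|^2 \le \frac1A \sum_{i \in I} \abs{c_i}^2
  \qquad \text{for all } (c_i)_{i\in I} \in \ell^2(I).
\]
Applying this to the sequence equal to $\ip{f}{\phi_i}$ for $i \notin I_N$ and to $0$ otherwise, and combining with the identity above, yields exactly \eqref{eq:n-term-frame-approx-bound}.

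The only point that genuinely needs care is the passage from the frame operator bounds to the dual frame bounds and their reformulation as the synthesis estimate above. Writing $S$ for the frame operator of $(\phi_i)$, one has $\tilde\phi_i = S^{-1}\phi_i$ and $A\norm{g}^2 \le \ip{Sg}{g} \le B\norm{g}^2$ for all $g \in \cH$, which inverts to $\tfrac1B\norm{g}^2 \le \ip{S^{-1}g}{g} \le \tfrac1A\norm{g}^2$ and thereby gives the dual frame bounds $1/B,1/A$; the synthesis estimate then follows by duality between the analysis and synthesis operators. I expect no further obstacle, since the remainder is just the linearity of the expansion and the trivial identification of the discarded part of the sum.
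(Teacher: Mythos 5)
Your proposal is correct and follows essentially the same route as the paper: both arguments reduce to the identity $f-f_N=\sum_{i\notin I_N}\ip{f}{\phi_i}\tilde\phi_i$ together with the upper frame bound $1/A$ of the canonical dual frame. The only difference is presentational: you invoke the synthesis-operator estimate $\normsmall{\sum_i c_i\tilde\phi_i}^2\le \frac{1}{A}\sum_i\abs{c_i}^2$ as a known fact, whereas the paper derives exactly this bound on the spot by writing $\norm{f-f_N}$ as a supremum over unit vectors $g$ and applying Cauchy--Schwarz with the upper frame inequality for $(\tilde\phi_i)_i$ applied to $g$ --- which is precisely the duality argument you sketch in your final paragraph.
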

\begin{proof}
  Recall that the canonical dual frame satisfies the frame inequality
  with bounds $B^{-1}$ and $A^{-1}$. At first hand, it therefore might look
  as if the estimate~(\ref{eq:n-term-frame-approx-bound}) should
  follow directly from the frame inequality for the canonical dual.
  However, since the sum in~(\ref{eq:n-term-frame-approx-bound}) does not run over the
  entire index set $i \in I$, but only $I \setminus I_N$, this is not
  the case. So, to prove the lemma, we first consider
\begin{align}
\norm{f-f_N} &= \sup\setprop{\abs{\innerprod{f-f_N}{g}}}{g \in \cH, \norm{g}=1} \nonumber \\
&= \sup\setprop{\absBig{\sum_{i\notin I_N}\innerprod{f}{\phi_i}\innerprod{\tilde\phi_i}{g}}}{g \in \cH, \norm{g}=1}. \label{eq:reisz-estimate}
\end{align}
Using Cauchy-Schwarz' inequality, we then have that
\begin{align*}
\absBig{\sum_{i \notin I_N}\innerprod{f}{\phi_i}\innerprod{\tilde\phi_i}{g}}^2
\le \sum_{i \notin I_N}\abs{\innerprod{f}{\phi_i}}^2
\sum_{i \notin I_N}\abs{\innerprod{\tilde\phi_i}{g}}^2 \le A^{-1} \norm{g}^2
\sum_{i \notin I_N}\abs{\innerprod{f}{\phi_i}}^2,
\end{align*}
where we have used the upper frame inequality for the dual frame $(\tilde \phi_i)_i$ in the second step. We can now continue
(\ref{eq:reisz-estimate}) and arrive at
\begin{align*}
\norm{f-f_N}^2 \le \sup\setprop{\frac1A \norm{g}^2 \sum_{i \notin I_N}\abs{\innerprod{f}{\phi_i}}^2}{g \in \cH, \norm{g}=1}
= \frac1A \sum_{i \notin I_N} \abs{\innerprod{f}{\phi_i}}^2.
\end{align*}
\end{proof}

Relating to the previous discussion about the decay of coefficients
$\ip{f}{\phi_i}$,
let $c^\ast$ denote the non-increasing (in modulus) rearrangement of
$c=(c_i)_{i \in I}=(\innerprod{f}{\phi_i})_{i \in I}$, \eg
$c^\ast_{\,n}$ denotes the $n$th largest coefficient of $c$ in
modulus. This rearrangement corresponds to a bijection $\pi:\N \to I$
that satisfies 
\begin{equation*}
\pi:\N \to I, \quad c_{\pi(n)}=c^\ast_{\,n} \text{ for all $n \in
  \N$}. 
 \end{equation*}
Strictly speaking, the rearrangement (and hence the mapping $\pi$) might
not be unique; we will simply take $c^\ast$ to be one of these rearrangements.
Since $c \in \ell^2(I)$, also $c^\ast \in \ell^2(\N)$. Suppose further
that $\abs{c^\ast_{n}}$ even decays as
\[
\abs{c^\ast_{n}} \lesssim n^{-(\alpha+1)/2} \qquad \text{for} \quad n \to \infty
\]
for some $\alpha>0$, where the notation $h(n)\lesssim g(n)$ means that
there exists a $C>0$ such that $h(n) \le C g(n)$, \ie $h(n)= O(g(n))$.
Clearly, we then have $c^\ast \in \ell^p(\N)$ for $p\ge \tfrac{2}{\alpha+1}$.
By Lemma~\ref{lemma:n-term-frame-approx}, the $N$-term
approximation error will therefore decay as
\begin{equation*}
\norm{f-f_N}^2 \le \frac1A \sum_{n>N} \abs{c^\ast_{n}}^2 \lesssim \sum_{n>N} n^{-\alpha+1} \asymp N^{-\alpha},
\end{equation*}
where $f_N$ is the $N$-term approximation of $f$ by keeping the $N$ largest coefficients, that is,
\begin{equation}
f_N = \sum_{n=1}^N c^*_{\,n} \, \tilde{\phi}_{\pi(n)}.
\label{eq:frame-n-term-largest}
\end{equation}
The notation $h(n)\asymp g(n)$, also written $h(n)=\Theta(g(n))$, used above means that $h$ is bounded both above and below
by $g$ asymptotically as $n\to \infty$, that is, $h(n)=O(g(n))$ \emph{and} $g(n)=O(h(n))$.

\subsection{A Notion of Optimality}
\label{sec:optimal-sparsity}

We now return to the setting of functions spaces $\cH=L^2(\R^d)$,
where the subset $\cC$ will be the class of cartoon-like images, that
is, $\cC=\cE_L^2(\R^d)$. We then aim for a benchmark, i.e., an
optimality statement, for sparse approximation of functions in
$\cE_L^2(\R^d)$. For this, we will again only require that our
representation system $\Phi$ is a dictionary, that is, we assume only
that $\Phi = (\phi_i)_{i \in I}$ is a complete family of functions in
$L^2(\R^d)$ with $I$ not necessarily being countable. Without loss of
generality, we can assume that the elements $\phi_i$ are normalized,
\ie $\norm[L^2]{\phi_i}=1$ for all $i \in I$.
For $f \in \cE_L^2(\R^d)$ we then consider expansions of the form
\[
f = \sum_{i \in I_f } c_i \, \phi_i,
\]
where $I_f \subset I$ is a countable selection from $I$ that may
depend on $f$. Relating to the previous subsection, the first $N$
elements of $\Phi_f:=\{\phi_i\}_{i \in
I_f}$ could for instance be the $N$ terms from
$\Phi$ selected for the best $N$-term approximation of $f$.

Since artificial cases shall be avoided, this selection procedure has
the following natural restriction which is usually termed {\em
  polynomial depth search}: The $n$th term in $\Phi_f$ is obtained by
only searching through the first $q(n)$ elements of the list
$\Phi_f$, where $q$ is a polynomial. Moreover, the selection rule
may \emph{adaptively} depend on $f$, and the $n$th element may also be
modified adaptively and depend on the first $(n-1)$th chosen elements.
We shall denote any sequence of coefficients $c_i$ chosen according to
these restrictions by $c(f)=(c_i)_i$. The role of the polynomial $q$
is to limit how deep or how far down in the listed dictionary $\Phi_f$
we are allowed to search for the next element $\phi_i$ in the
approximation. Without such a depth search limit, one could choose
$\Phi$ to be a countable, dense subset of $L^2(\R^d)$ which would
yield arbitrarily good sparse approximations, but also infeasible
approximations in practise.

Using information theoretic arguments, it was then shown in
\cite{Don01,KLL10}, that almost no matter what selection procedure we
use to find the coefficients $c(f)$, we cannot have
$\norm[\ell^p]{c(f)}$ bounded for $p<\frac{2(d-1)}{d+1}$ for $d=2, 3$.

\begin{theorem}[\cite{Don01,KLL10}]
\label{theo:benchmark}
Retaining the definitions and notations in this subsection and allowing only polynomial depth search, we obtain
\[
\max_{f\in \cE_{L}^{2}(\R^d)} \norm[\ell^p]{c(f)} = +\infty,  \qquad \text{for } p<\frac{2(d-1)}{d+1}.
\]
\end{theorem}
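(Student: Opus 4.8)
The plan is an information-theoretic counting argument: I would assume for contradiction that $\max_{f}\norm[\ell^p]{c(f)}\le R<\infty$ for some fixed $p<p^\ast:=\tfrac{2(d-1)}{d+1}$, and then exhibit a finite family of cartoon-like images that is too rich to be encoded by such sparse representations. The first step is to convert the sparsity hypothesis into an approximation rate. Keeping the $N$ largest coefficients and arguing as in the paragraph following Lemma~\ref{lemma:n-term-frame-approx}, the bound $\norm[\ell^p]{c(f)}\le R$ forces $c^\ast_n\lesssim R\,n^{-1/p}$ and hence a best $N$-term error $\norm[L^2]{f-f_N}\lesssim R\,N^{-s}$ with $s=\tfrac1p-\tfrac12>\tfrac{1}{d-1}$, that is, strictly faster than the rate $N^{-1/(d-1)}$ that the metric entropy of the class will ultimately permit.

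Next I would build the hard family. On the $C^2$ discontinuity $\partial B$ of a fixed reference cartoon I place $K\asymp w^{-(d-1)}$ disjoint, independently switchable bumps of tangential width $w$; the curvature constraint $\le\nu$ forces each bump height to scale like $w^2$, so that $B$ stays in $\cE_L^2(\R^d)$ and flipping one bump changes the $L^2$-mass by $\asymp w^{d-1}\cdot w^2=w^{d+1}$. This produces a hypercube of $2^K$ admissible cartoons indexed by $\eta\in\{0,1\}^K$, with $\norm[L^2]{f_\eta-f_{\eta'}}^2\asymp (\text{Hamming distance})\cdot w^{d+1}$. A Varshamov--Gilbert extraction then yields a subfamily of $M\asymp 2^{cK}$ cartoons whose pairwise $L^2$-distance is at least $\delta\asymp w$.

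The contradiction comes from counting. Choosing $N$ so that the step-one error falls below $\delta/3$ makes the $N$-term approximations $f_N$ of the $M$ selected cartoons pairwise distinct. On the other hand, polynomial depth search restricts the supporting index set of $f_N$ to one of at most $q(N)^N$ choices, and the coefficients (bounded by $R$) need only be quantized to an accuracy preserving the $\delta$-separation; hence the number of distinguishable $N$-term approximations is at most $2^{C N\log(N/\delta)}$. Matching $2^{CN\log(N/\delta)}\ge M\asymp 2^{cK}$ with $K\asymp w^{-(d-1)}$, $\delta\asymp w$, and the value $N\asymp w^{-1/s}$ from step one yields, as $w\to0$, the requirement $w^{-1/s}\,\mathrm{polylog}(1/w)\gtrsim w^{-(d-1)}$, which fails precisely because $s>\tfrac{1}{d-1}$. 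This forces $\max_f\norm[\ell^p]{c(f)}=+\infty$ for every $p<p^\ast$.

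The main obstacle is the sharp bookkeeping that ties the last two steps together. Obtaining the exponent $p^\ast$ rather than a weaker threshold hinges on using the $\ell^2$-type tail bound of Lemma~\ref{lemma:n-term-frame-approx} in step one (so that $s=\tfrac1p-\tfrac12$, not $\tfrac1p-1$) and, simultaneously, on a count of distinguishable quantized polynomial-depth-search approximations that is tight up to the harmless $\log$ factors; any slack in either estimate shifts the critical $p$, so these two bounds must be calibrated against the entropy exponent $d-1$ with care. A secondary, more technical point is verifying that the bump construction genuinely respects $C^2$ regularity and the curvature bound $\nu$ uniformly, and that the Varshamov--Gilbert subfamily really realizes the separation $\delta\asymp w$; these are routine but must be checked to keep all constants independent of $w$.
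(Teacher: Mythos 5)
First, a point of reference: the paper does not prove Theorem~\ref{theo:benchmark} at all --- it only cites \cite{Don01,KLL10} and describes the argument as ``information theoretic.'' Your proposal reproduces the architecture of those cited proofs faithfully: the hypercube of switchable bumps of tangential width $w$ and height $w^2$ on $\partial B$, giving $K\asymp w^{-(d-1)}$ orthogonal perturbations of $L^2$-size $w^{(d+1)/2}$, is exactly the construction that produces the exponent $\tfrac{2(d-1)}{d+1}$, and the Varshamov--Gilbert extraction plus the $q(N)^N$ count of index sets permitted by polynomial depth search are the right ingredients.

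There is, however, a genuine gap in your Step 1, and you have in fact put your finger on it yourself without resolving it. Theorem~\ref{theo:benchmark} is a statement about \emph{arbitrary} dictionaries $\Phi$ (the subsection explicitly assumes only completeness and normalization, not a frame property), and about \emph{arbitrary} admissible coefficient sequences $c(f)$, not the canonical frame coefficients. Lemma~\ref{lemma:n-term-frame-approx} is therefore unavailable: its hypothesis is that $\Phi$ is a frame, and its conclusion concerns the specific approximant $\sum_{i\in I_N}\innerprod{f}{\phi_i}\tilde\phi_i$. For a general dictionary there is no upper Bessel bound on the synthesis operator, so the tail $\normsmall{\sum_{n>N}c^\ast_n\phi_{\pi(n)}}$ cannot be controlled by $(\sum_{n>N}\abs{c^\ast_n}^2)^{1/2}$ (consider many repeated or nearly parallel atoms); only the triangle inequality survives, which yields $\norm[L^2]{f-f_N}\lesssim R\,N^{1-1/p}$, i.e.\ $s=\tfrac1p-1$ rather than $\tfrac1p-\tfrac12$. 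Running your Step 3 with this weaker $s$ proves blow-up only for $p<\tfrac{d-1}{d}$ (e.g.\ $p<\tfrac12$ instead of $p<\tfrac23$ for $d=2$), so the claimed critical exponent is not reached. Since your injectivity criterion for the quantized $N$-term data genuinely requires the $L^2$ tail to be below $\delta/3$, this cannot be patched by weakening the separation argument; the proofs in \cite{Don01,KLL10} avoid the issue by arguing directly on the coefficients and their projections onto the embedded hypercube (counting how many coefficients of magnitude $\gtrsim\delta$ a typical vertex must use, which lower-bounds $\norm[\ell^p]{c(f)}^p$ by $m\delta^p\asymp\delta^{p-p^\ast}\to\infty$) rather than through the $L^2$ error of a truncated synthesis. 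Everything else in your outline --- the bump geometry, the curvature check, the Varshamov--Gilbert step, the depth-search count --- is sound.
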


In case $\Phi$ is an orthonormal basis for
$L^2(\RR^d)$,  the norm $\norm[\ell^p]{c(f)}$ is trivially bounded
for $p \ge 2$ since we can take $c(f)=(c_i)_{i \in I} =
(\innerprod{f}{\phi_i})_{i \in I}$. Although not explicitly stated,
the proof can be straightforwardly extended from 3D to higher
dimensions as also the definition of cartoon-like images can be
similarly extended. It is then intriguing to analyze the behavior of
$\frac{2(d-1)}{d+1}$ from Thm.~\ref{theo:benchmark}. In fact, as $d \to
\infty$, we observe that $\frac{2(d-1)}{d+1} \to 2$. Thus, the decay of
any ${c(f)}$ for cartoon-like images becomes slower as
$d$ grows and approaches $\ell^2$, which -- as we just mentioned -- is
actually the rate guaranteed for \emph{all} $f \in L^2(\R^d)$.

Thm.~\ref{theo:benchmark} is truly a statement about the optimal achievable sparsity level: No representation system
-- up to the restrictions described above -- can deliver approximations for $\cE^{2}_{L}(\R^d)$ with coefficients
satisfying $c(f) \in \ell_p$ for $p < \frac{2(d-1)}{d+1}$.
This implies the following lower bound 
\begin{equation}
  \label{eq:coeff-fastest-decay}
  c(f)^\ast_{\,n} \gtrsim n^{-\frac{d+1}{2(d-1)}} =
  \begin{cases}
    n^{-3/2} & : \quad d=2, \\
   n^{-1} & : \quad d=3.
  \end{cases}
\end{equation}
where $c(f)^\ast=(c(f)^\ast_{n})_{n\in \N}$ is a decreasing (in modulus) arrangement of the coefficients $c(f)$.

One might ask how this relates to the approximation error of (best) $N$-term
approximation discussed before. For simplicity, suppose for a moment
that $\Phi$ is actually an orthonormal basis (or more generally a
Riesz basis) for $L^2(\R^d)$ with $d=2$ and $d=3$. Then -- as
discussed in Sect.~\ref{subsubsec:ONB} -- the \emph{best} $N$-term
approximation to $f \in \cE^{2}_{L}(\R^d)$ is obtained by keeping the
$N$ largest coefficients. Using the error
estimate~\eqref{eq:errorONB} as well as (\ref{eq:coeff-fastest-decay}), we obtain
\begin{align*}
  \norm[L^2]{f-f_N}^2 =  \sum_{n>N} \abs{c(f)^\ast_{\,n}}^2 \gtrsim \sum_{n>N} n^{-\frac{d+1}{d-1}} \asymp N^{-\frac{2}{d-1}},
\end{align*}
i.e., the best $N$-term approximation error $\norm[L^2]{f-f_N}^2$
behaves asymptotically as $N^{-\frac{2}{d-1}}$ or worse. If, more
generally, $\Phi$ is a frame, and $f_N$ is chosen as in
(\ref{eq:frame-n-term-largest}), we can similarly conclude that the
asymptotic lower bound for $\norm[L^2]{f-f_N}^2$ is
$N^{-\frac{2}{d-1}}$, that is, the optimally achievable rate is, at
best, $N^{-\frac{2}{d-1}}$. Thus, this optimal rate can be used as a
benchmark for measuring the sparse approximation ability of
cartoon-like images of different representation systems. Let us phrase
this formally.

\begin{definition}
\label{def:optimal}
Let $\Phi = (\phi_i)_{i \in I}$ be a frame for $L^2(\R^d)$ with $d=2$
or $d=3$. We say that \emph{$\Phi$ provides optimally sparse
  approximations} of cartoon-like images 
if, for each $f \in \cE^{2}_{L}(\R^d)$, the
associated $N$-term approximation $f_N$ (cf.
\eqref{eq:frame-n-term-largest}) by keeping the $N$ largest
coefficients of $c=c(f)=(\innerprod{f}{\phi_i})_{i\in I}$  satisfies
\begin{align}
  \norm[L^2]{f-f_N}^2 &\lesssim N^{-\frac{2}{d-1}} \qquad \text{as $N \to
    \infty$,} \label{eq:approx-fastest-decay-upper}\\
\intertext{and} \abs{c^\ast_{\,n}} &\lesssim
  n^{-\frac{d+1}{2(d-1)}} \qquad \text{as $n \to
    \infty$,} \label{eq:coeff-fastest-decay-upper}
\end{align}
where we ignore $\log$-factors. 
\end{definition}

Note that, for frames $\Phi$, the bound $\abs{c^\ast_{\,n}} \lesssim
n^{-\frac{d+1}{2(d-1)}}$
automatically implies that $ \norm{f-f_N}^2 \lesssim N^{-\frac{2}{d-1}}$ whenever
$f_N$ is chosen as in Eqn.~\eqref{eq:frame-n-term-largest}. This follows
from Lemma~\ref{lemma:n-term-frame-approx} and the estimate
\begin{align}
  \sum_{n>N} \abs{c^\ast_{\,n}}^2 \lesssim \sum_{n>N}
  n^{-\frac{d+1}{d-1}} \lesssim \int_N^\infty x^{-\frac{d+1}{d-1}}
  \D x \le C \cdot N^{-\frac{2}{d-1}}, \label{eq:sparsity-implies-approx}
\end{align}
where we have used that $-\frac{d+1}{d-1}+1=-\frac{2}{d-1}$. Hence, we are
searching for a representation system $\Phi$ which forms a frame and
delivers decay of $c=(\innerprod{f}{\phi_i})_{i\in I}$ as (up to log-factors)
\begin{align}
 \abs{c^*_{\, n}} \lesssim n^{-\frac{d+1}{2(d-1)}} = \begin{cases}
    n^{-3/2} & : \quad d=2, \\
   n^{-1} & : \quad d=3.
  \end{cases}
\label{eq:sought-sparsity}
\end{align}
as $n \to \infty$ for any cartoon-like image.

\subsection{Approximation by Fourier Series and Wavelets}
\label{sec:some-illustr-exampl}

We will next study two examples of more traditional representation
systems --  the Fourier basis and wavelets -- with respect to
their ability to meet this benchmark. For this, we choose the function
$f = \chi_B$, where $B$ is a ball contained in $\itvcc{0}{1}^d$, again
$d=2$ or $d=3$, as a simple cartoon-like image in $\cE^2_L(\R^d)$ with
$L=1$, analyze the error $\norm{f-f_N}^2$ for $f_N$ being the $N$-term
approximation by the $N$ largest coefficients and compare with the
optimal decay rate stated in Definition \ref{def:optimal}. It will
however turn out that these systems are far from providing optimally sparse
approximations of cartoon-like images, thus underlining the pressing
need to introduce representation systems delivering this optimal rate;
and we already now refer to Sect.~\ref{sec:optim-sparse-appr} in which
shearlets will be proven to satisfy this property.

Since Fourier series and wavelet systems are orthonormal bases (or
more generally, Riesz bases) the best $N$-term approximation is found
by keeping the $N$ largest coefficients as discussed in Sect.~\ref{subsubsec:ONB}.

\subsubsection{Fourier Series}
\label{sec:fourier-series}
  The error of the best $N$-term Fourier series approximation
  of a typical cartoon-like image decays asymptotically as
  $N^{-1/d}$. The following proposition shows this behavior in the
  case of a very simple cartoon-like image: The characteristic function on a
  ball.

\begin{proposition}\label{prop:fourier-series-ball-decay}
  Let $d\in \N$, and let $\Phi=(\mathrm{e}^{2\pi ikx})_{k \in \Z^d}$. Suppose $f=\chi_B$, where $B$ is a ball contained in
  $\itvcc{0}{1}^d$. Then
  \[ \norm[L^2]{f-f_n}^2 \asymp N^{-1/d} \qquad \text{for } N\to
  \infty,\] where $f_N$ is the best $N$-term approximation from
  $\Phi$.
\end{proposition}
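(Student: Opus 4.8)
The plan is to use that $\Phi$ is an orthonormal basis, so by the error formula \eqref{eq:errorONB} the best $N$-term error equals the $\ell^2$-tail of the Fourier coefficients $\hat f(k)=\ip{f}{\phi_k}$, $k\in\Z^d$; the whole problem then reduces to understanding the size distribution of these coefficients. Since $f=\chi_B$ with $B$ a ball of radius $r$ and center $x_0$, a direct computation gives a closed form in terms of a Bessel function: up to a unimodular modulation coming from $x_0$,
\[
\abs{\hat f(k)} = r^{d/2}\,\abs{k}^{-d/2}\,\bigl\lvert J_{d/2}(2\pi r\abs{k})\bigr\rvert .
\]
Inserting the large-argument asymptotics $J_{d/2}(t)=\sqrt{2/(\pi t)}\bigl(\cos(t-\theta_d)+\BigO(t^{-1})\bigr)$ with $\theta_d=(d+1)\pi/4$ yields the envelope estimate
\[
\abs{\hat f(k)}^2 \asymp \abs{k}^{-(d+1)}\cos^2\!\bigl(2\pi r\abs{k}-\theta_d\bigr)\qquad(\abs{k}\to\infty),
\]
so that $\abs{\hat f(k)}\lesssim \abs{k}^{-(d+1)/2}$ everywhere and, on a positive proportion of each dyadic frequency shell, $\abs{\hat f(k)}\gtrsim \abs{k}^{-(d+1)/2}$.

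For the upper bound I would not invoke the optimal selection at all. Keeping all frequencies in a ball $\abs{k}\le R$ with $R\asymp N^{1/d}$ is one admissible $N$-term approximation, so the best $N$-term error is no larger, and the envelope bound gives
\[
\sum_{\abs{k}>R}\abs{\hat f(k)}^2 \lesssim \sum_{\abs{k}>R}\abs{k}^{-(d+1)} \asymp \int_R^\infty \rho^{-(d+1)}\rho^{d-1}\,\D\rho = R^{-1}\asymp N^{-1/d}.
\]

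For the matching lower bound I would first locate the thresholding level. By the envelope, every coefficient above the threshold $\lambda_N:=c^\ast_N$ sits at a frequency $\abs{k}\lesssim\lambda_N^{-2/(d+1)}$, so the counting function satisfies $\#\{\abs{\hat f(k)}\ge\lambda\}\lesssim\lambda^{-2d/(d+1)}$; together with the shell lower bound below (which gives the reverse inequality) this pins down $\lambda_N\asymp N^{-(d+1)/(2d)}$. Consequently every one of the $N$ retained frequencies obeys $\abs{k}\le C_1N^{1/d}$, whence the discarded set contains the whole exterior region, and summing the shell estimate over dyadic annuli gives
\[
\norm[L^2]{f-f_N}^2 \ge \sum_{\abs{k}>C_1N^{1/d}}\abs{\hat f(k)}^2 \gtrsim N^{-1/d},
\]
which matches the upper bound and yields the asserted $N^{-1/d}$.

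The crux, and the only genuinely delicate point, is the shell lower bound $\sum_{R\le\abs{k}\le 2R}\abs{\hat f(k)}^2\gtrsim R^{-1}$. The upper direction is trivial ($\cos^2\le 1$), but the lower direction must rule out the conspiracy that the zeros of $J_{d/2}$ line up with the lattice, i.e.\ that all phases $2\pi r\abs{k}$ cluster near zeros of the cosine. I would resolve this by an equidistribution argument: as $k$ ranges over a shell the radii $\abs{k}=\sqrt{n}$ make $\{r\abs{k}\}$ equidistribute modulo $1$ (Weyl / van der Corput), so $\cos^2(2\pi r\abs{k}-\theta_d)\ge\tfrac14$ on a fixed proportion of the $\asymp R^d$ shell points; multiplying this count by the envelope $\abs{k}^{-(d+1)}\asymp R^{-(d+1)}$ returns the desired $R^{-1}$. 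I expect this non-cancellation step, rather than the harmonic-analytic bookkeeping, to be the real obstacle, the technical wrinkle being that lattice points sharing a common value of $\abs{k}^2$ weight the phases non-uniformly.
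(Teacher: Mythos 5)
Your proof is correct and rests on the same core computation as the paper's: the closed-form expression of $\hat f$ through $J_{d/2}$, the envelope $\abs{\hat f(k)}\asymp\abs{k}^{-(d+1)/2}$, and the count of $\asymp R^d$ lattice points per frequency ball. Where you genuinely add something is in the two places the paper is silently heuristic. First, the paper computes the error of the partial sum over the frequency ball $\abs{k}\le N$ and then rescales by $\card{I_N}\asymp N^d$; it never argues that the \emph{best} $N$-term selection cannot beat this ball, whereas your thresholding argument ($c^\ast_{\,N}\asymp N^{-(d+1)/(2d)}$ forces every retained frequency into a ball of radius $\asymp N^{1/d}$, so the entire exterior is discarded) closes exactly that gap. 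Second, the paper asserts $\abssmall{\hat f(\xi)}\asymp\norm[2]{\xi}^{-(d+1)/2}$ pointwise and replaces the lattice sum by an integral, ignoring the zeros of $J_{d/2}$; you correctly identify the needed shell lower bound as the real content of the lower estimate. One caution on your proposed resolution: weighted Weyl equidistribution of $\{r\sqrt{n}\}$ is awkward precisely because of the multiplicities $r_d(n)$ you flag (very irregular for $d=2,3$). A cleaner route to the same conclusion is geometric rather than arithmetic: the set $\{t\in[R,2R]:\cos^2(2\pi rt-\theta_d)\ge\tfrac14\}$ is a union of $\asymp R$ intervals of length $\asymp 1/r$, each corresponding annulus of constant width contains $\asymp R^{d-1}$ lattice points by standard annulus counts ($d\ge 2$), and summing gives the required positive proportion of the shell without ever touching the distribution of $r_d(n)$. (For $d=1$ one should also exclude the degenerate case $2r\in\Z$, where all nonzero coefficients vanish; the paper ignores this edge case as well.)
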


\begin{proof}
  We fix a new origin as the center of the ball $B$. Then $f$ is a
  radial function $f(x)=h(\norm[2]{x})$ for $x\in \R^d$. The Fourier
  transform of $f$ is also a radial function and can expressed
  explicitly by Bessel functions of first kind
  \cite{MR1237988,MR2587580}:
  \[ \hat f(\xi) = r^{d/2} \frac{J_{d/2}(2\pi
    r\norm[2]{\xi})}{\norm[2]{\xi}^{d/2}},
  \]
  where $r$ is the radius of the ball $B$. Since the Bessel function
  $J_{d/2}(x)$ decays like
  $x^{-1/2}$ as $x \to \infty$, the Fourier transform of $f$ decays
  like $\abssmall{\hat f(\xi)} \asymp \norm[2]{\xi}^{-(d+1)/2}$ as
  $\norm[2]{\xi}\to \infty$. Letting $I_N = \{k \in \Z^d:
  \norm[2]{k} \le N\}$ and $f_{I_N}$ be the partial Fourier sum
  with terms from $I_N$, we obtain
  \begin{align*}
    \norm[L^2]{f-f_{I_N}}^2 &= \sum_{k \not \in I_N} \abs{\hat f(k)}^2
    \asymp 
    \int_{\norm[2]{\xi} > N} \norm[2]{\xi}^{-(d+1)} \D\xi \\ &=
    \int_N^\infty r^{-(d+1)} r^{(d-1)} \D r = \int_N^\infty r^{-2} \D r =
    N^{-1}.
  \end{align*}
  The conclusion now follows from the cardinality of $\card{I_N}\asymp
  N^d$ as $N \to \infty$.
\end{proof}

\subsubsection{Wavelets}
\label{sec:wavelets}

Since wavelets are designed to deliver sparse representations of
singularities -- see Chapter \cite{Intro} -- we expect this system to
outperform the Fourier approach. This will indeed be the case.
However, the optimal rate will still by far be missed. The best
$N$-term approximation of a typical cartoon-like image using a wavelet
basis performs only slightly better than Fourier series with asymptotic behavior
as $N^{-1/(d-1)}$. This is illustrated by the following result.

\begin{proposition}\label{prop:wavelets-ball-decay}
  Let $d = 2,3$, and let $\Phi$ be a wavelet basis for $L^2(\R^d)$ or
  $L^2(\itvcc{0}{1}^d)$. 
  Suppose $f=\chi_B$, where $B$ is a ball contained in
  $\itvcc{0}{1}^d$. Then
\[ \norm[L^2]{f-f_n}^2 \asymp N^{-\frac{1}{d-1}} \qquad \text{for } N\to \infty,\]
where $f_N$ is the best $N$-term approximation from $\Phi$.
\end{proposition}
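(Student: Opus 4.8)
The plan is to exploit the multiresolution structure of the wavelet basis together with the vanishing moments of the wavelets, reducing the estimate to a counting argument over those coefficients that ``see'' the discontinuity surface $\pa B$. I would write the basis as scaling functions at the coarsest level together with wavelets $\psi_{j,k}$ for $j\ge 0$ and $k\in\Z^d$ (and finitely many wavelet types, which only affects constants), normalized so that $\psi_{j,k}(x)=2^{jd/2}\psi(2^j x-k)$ is essentially supported on a cube of sidelength $\asymp 2^{-j}$ centered at $2^{-j}k$. Since $f=\chi_B$ is constant on each side of $\pa B$ and every wavelet has vanishing integral, the coefficient $\innerprod{f}{\psi_{j,k}}$ vanishes whenever the support of $\psi_{j,k}$ misses $\pa B$; only the boundary-straddling wavelets contribute.

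The first step is to count the significant coefficients at scale $j$. As $\pa B$ is a smooth $(d-1)$-dimensional sphere of bounded area, the number of scale-$j$ cubes meeting it is $\asymp 2^{j(d-1)}$, and this is the number of nonvanishing coefficients at that scale. The second step is to size them: changing variables $y=2^j x-k$ gives $\innerprod{f}{\psi_{j,k}}=2^{-jd/2}\int_{2^j B-k}\overline{\psi(y)}\,\D y$, and since at fine scales the rescaled boundary is nearly flat within the fixed support of $\psi$, the remaining integral is $O(1)$. Hence each significant coefficient obeys $\abs{\innerprod{f}{\psi_{j,k}}}\lesssim 2^{-jd/2}$.

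With these two ingredients the rate follows by sorting the coefficients by magnitude. Because all significant coefficients at scale $j$ have comparable size $\asymp 2^{-jd/2}$ and there are $\asymp 2^{j(d-1)}$ of them, keeping the $N$ largest is, up to constants, the same as retaining all scales $j\le J$ with $N\asymp\sum_{j\le J}2^{j(d-1)}\asymp 2^{J(d-1)}$; equivalently $\abs{c^\ast_n}\asymp n^{-d/(2(d-1))}$. Since $\Phi$ is an orthonormal (or Riesz) basis, the best $N$-term error is controlled by the tail as in \eqref{eq:errorONB}, and I obtain
\[
\norm[L^2]{f-f_N}^2=\sum_{n>N}\abs{c^\ast_n}^2\asymp\sum_{j>J}2^{j(d-1)}\,2^{-jd}=\sum_{j>J}2^{-j}\asymp 2^{-J}\asymp N^{-1/(d-1)},
\]
which is the asserted asymptotic.

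The upper bound in the size estimate uses only a vanishing moment and localization, so it is routine; the delicate point, needed for the matching lower bound and hence for the two-sided $\asymp$, will be to show that a positive proportion of the boundary-straddling wavelets have coefficients of the full order $2^{-jd/2}$ rather than anomalously small ones. This is where the transversality of the smooth sphere $\pa B$ enters: as $k$ ranges over the significant indices the offset of $\pa B$ within the support of $\psi$ varies, and because the half-space integral $c\mapsto\int_{\{y_1>c\}}\psi$ is a nonconstant function of the offset, a fixed fraction of these coefficients is bounded below by a constant times $2^{-jd/2}$. Establishing this equidistribution of offsets uniformly in $j$, which the curvature bound on $\pa B$ guarantees, is the step I expect to be the main obstacle.
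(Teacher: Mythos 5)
Your strategy is the same as the paper's: use the vanishing moment to discard all wavelets whose support misses $\pa B$, count $\asymp 2^{j(d-1)}$ surviving coefficients at scale $j$, bound each by $2^{-jd/2}$, and sum the tail after sorting. The upper-bound half ($\lesssim$) is complete and matches the paper's argument (which gets the same bound via $\abssmall{\innerprods{f}{\psi_{j,k}}}\le\norm[L^\infty]{f}\normsmall[L^1]{\psi_{j,k}}\lesssim 2^{-jd/2}$). The genuine gap is exactly the one you flag yourself and do not close: the proposition asserts a two-sided $\asymp$, and without showing that a fixed positive fraction of the boundary-straddling coefficients attains the full order $2^{-jd/2}$, you only obtain $\norm[L^2]{f-f_N}^2\lesssim N^{-1/(d-1)}$. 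In particular the claim $\abs{c^\ast_n}\asymp n^{-d/(2(d-1))}$ is unsupported as written: conceivably most of the $2^{j(d-1)}$ straddling coefficients could be anomalously small, which would change the sorted decay and hence the rate.

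The paper sidesteps the equidistribution argument you anticipate as the main obstacle by specializing to the concrete Haar tensor basis. There, for a boundary-straddling index the coefficient can be computed in closed form: for a flat, axis-aligned piece of boundary one gets $\abssmall{\innerprods{f}{\psi^1_{j,k}}}=(b-2^{-j}k_1)\,2^{-j(d-1)}2^{jd/2}$, where the offset $b-2^{-j}k_1$ is ``typically'' of size $\tfrac14\,2^{-j}$, so the coefficient is genuinely $\asymp 2^{-jd/2}$ for a positive proportion of the $k$'s; the ball case is then asserted to follow from the same (less transparent) calculation, and the extension to other wavelet bases is only remarked upon. So the paper trades your general equidistribution lemma for an explicit computation with a specific generator --- admittedly still with a ``typically'' in it, but one that is easy to make precise for the Haar function since the offset map $c\mapsto\int_{\set{y_1>c}}\psi$ is piecewise linear and vanishes only at isolated offsets. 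If you want to keep your general-wavelet formulation, you do need to carry out the lower-bound step; otherwise, following the paper and fixing the Haar basis lets you replace it with a one-line integral evaluation.
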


\begin{proof}
Let us first consider wavelet approximation by the Haar
tensor wavelet basis for $L^2(\itvcc{0}{1}^d)$ of the form
\[\setprop{\phi_{0,k}}{ \abs{k}
\le 2^J-1} \cup \setprop{\psi^1_{j,k},\dots
\psi^{2d-1}_{j,k}}{j\ge J, \abs{k} \le 2^{j-J}-1},
\]
where $J \in \N$, $k\in \N_0^d$, and $g_{j,k}=2^{jd/2}g(2^j\cdot -k)$
for $g \in L^2(\R^d)$.
There are only a finite number of coefficients of the form
$\innerprods{f}{\phi_{0,k}}$, hence we do not need to consider these
for our asymptotic estimate. For simplicity, we take $J=0$. At scale
$j\ge 0$ there exist $\Theta(2^{j(d-1)})$ non-zero wavelet coefficients,
since the surface area of $\pa B$ is finite and the wavelet elements
are of size $2^{-j} \times \dots \times 2^{-j}$.

To illustrate the calculations leading to the sought approximation error rate, we
will first consider the case where $B$ is a cube in $\itvcc{0}{1}^d$.
For this, we first consider the non-zero coefficients associated with the
face of the cube containing the point $(b,c,\dots,c)$. For scale $j$,
let $k$ be such that $\supp \psi^1_{j,k} \cap \supp f \neq \emptyset$,
where $\psi^1(x)=h(x_1)p(x_2)\cdots p(x_d)$ and $h$ and $p$ are the
Haar wavelet and scaling function, respectively. Assume that $b$ is
located in the first half of the interval
$\itvcc{2^{-j}k_1}{2^{-j}(k_1+1)}$; the other case can be handled
similarly. Then
\[
\abssmall{\innerprods{f}{\psi^1_{j,k}}} = \int_{2^{-j}k_1}^b
2^{jd/2} \D x_1 \prod_{i=2}^d \int_{2^{-j}k_i}^{2^{-j}(k_i+1)} \D x_i
= 
(b-2^{-j}k_1) 2^{-j(d-1)} 2^{jd/2} \asymp 2^{-jd/2},
\] where we have
used that $(b-2^{-j}k_1)$ will typically be of size $\tfrac{1}{4}\, 2^{-j}$.
 Note that for the chosen $j$ and $k$
above, we also have that $\innerprods{f}{\psi^l_{j,k}}=0$ for all
$l=2, \dots, 2^d-1$.

There will be $2 \cdot\ceilsmall{2c 2^{j(d-1)}}$ nonzero
coefficients of size $2^{-jd/2}$ associated with the wavelet $\psi^1$
at scale $j$. The same conclusion holds for the other wavelets
$\psi^l$, $l=2,\dots,2^d-1$. To summarize, at scale $j$ there will be
$C\, 2^{j(d-1)}$ nonzero coefficients of size $C\, 2^{-jd/2}$. On the
first $j_0$ scales, that is $j=0,1,\dots j_0$, we therefore have
$\sum_{j=0}^{j_0} 2^{j(d-1)} \asymp 2^{j_0(d-1)}$ nonzero
coefficients. The $n$th largest coefficient $c^\ast_{\,n}$ is of size
$n^{-\frac{d}{2(d-1)}} $ since, for $n=2^{j(d-1)}$, we have
\[
2^{-j\frac{d}{2}} = n^{-\frac{d}{2(d-1)}}.
\]
Therefore,
  \[ \norm[L^2]{f-f_{N}}^2= \sum_{n>N} \abs{c^\ast_{\,n}}^2  \asymp
  \sum_{n>N} n^{-\frac{d}{d-1}} \asymp \int_N^\infty x^{-\frac{d}{d-1}} \D x= \frac{d}{d-1}  N^{-\frac{1}{d-1}}.
  \]
Hence, for the  best $N$-term approximation $f_N$ of $f$ using a wavelet
basis, we obtain the asymptotic estimates
\begin{align*}
  \norm[L^2]{f-f_{N}}^2   = \Theta(N^{-\frac{1}{d-1}}) = \left\{\begin{aligned}
      \Theta(N^{-1}), &\qquad    \text{if } d=2,\\
      \Theta(N^{-1/2}), &\qquad  \text{if } d=3,
    \end{aligned}
  \right. .
\end{align*}

Let us now consider the situation that $B$ is a ball. In fact, in this case
we can do similar (but less transparent) calculations
leading to the same asymptotic estimates as above. We will not repeat
these calculations here, but simply remark that the upper asymptotic bound
in $\abssmall{\innerprods{f}{\psi^l_{j,k}}} \asymp 2^{-jd/2}$ can be
seen by the following general argument:
\[ \abssmall{\innerprods{f}{\psi^l_{j,k}}} \le \norm[L^\infty]{f}
\normsmall[L^1]{\psi^l_{j,k}} \le \normsmall[L^\infty]{f}
\normsmall[L^1]{\psi^l} 2^{-jd/2} \le C\, 2^{-jd/2}, \] which holds
for each $l=1,\dots,2^d-1$.

Finally, we can conclude from our calculations that choosing another
wavelet basis will not improve the approximation rate. 
\end{proof}


\begin{remark}
  We end this subsection with a remark on {\em linear} approximations. For a
  linear wavelet approximation of $f$ one would use
  \[
  f \approx \innerprod{f}{\phi_{0,0}} \phi_{0,0} +
  \sum_{l=1}^{2^d-1}\sum_{j=0}^{j_0}\sum_{\abs{k}\le 2^{j}-1}
  \innerprods{f}{\psi^l_{j,k}} \psi^l_{j,k}
  \]
  for some $j_0 >0$. If restricting to linear approximations, the summation
  order is not allowed to be changed, and we therefore need to include all
  coefficients from the first $j_0$ scales. At scale $j\ge 0$, there exist
  a total of $2^{jd}$ coefficients, which by our previous considerations can be bounded by $C\cdot
  2^{-jd/2}$. Hence, we include $2^j$ times as many coefficients
  as in the non-linear approximation on each
  scale.
  This implies that the error rate of the linear $N$-term
  wavelet approximation is $N^{-1/d}$, which is the {\em same} rate as
  obtained by Fourier approximations.
\end{remark}

\subsubsection{Key Problem}
\label{sec:key-problem}

The key problem of the suboptimal behavior of Fourier series and
wavelet bases is the fact that these systems are not generated by
anisotropic elements. Let us illustrate this for 2D in the case of
wavelets. Wavelet elements are isotropic due to the scaling matrix
$\diag{2^j, 2^j}$. However, already intuitively, approximating a curve
with isotropic elements requires many more elements than if the
analyzing elements would be anisotropic themselves, see
Fig.~\ref{fig:wavelet-curve} and~\ref{fig:shearlet-curve}.
\begin{figure}[ht]
  \centering
\parbox{0.42\textwidth}{%
\includegraphics[width=0.35\textwidth]{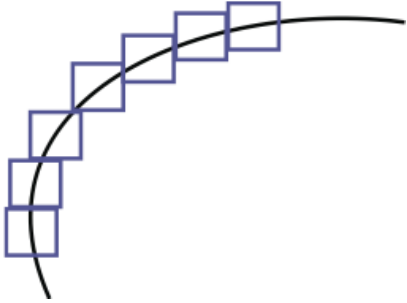}
\caption{Isotropic  elements capturing a dis\-con\-tinuity curve.}
 \label{fig:wavelet-curve}
 }%
\qquad 
\parbox{0.42\textwidth}{%
    \includegraphics[width=0.37\textwidth]{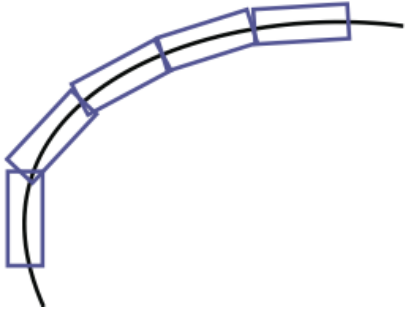}
\caption{Rotated, anisotropic elements capturing a dis\-con\-tinuity curve.}
 \label{fig:shearlet-curve}}
\end{figure}

Considering wavelets with anisotropic scaling will not remedy the
situation, since within one fixed scale one cannot control the
direction of the (now anisotropically shaped) elements. Thus, to
capture a discontinuity curve as in Fig.~\ref{fig:shearlet-curve}, one
needs not only anisotropic elements, but also a location parameter to
locate the elements on the curve and a rotation parameter to align the
elongated elements in the direction of the curve.

Let us finally remark why a parabolic scaling matrix $\diag{2^j, 2^{j/2}}$
will be natural to use as anisotropic scaling. Since the discontinuity curves of cartoon-like images are
$C^2$-smooth with bounded curvature, we may write the curve locally by a Taylor expansion. Let's assume
it has the form $(s,E(s))$ with
 \begin{equation*}
  E(s) = E(s') + E'(s') s + E''(t) s^2
\end{equation*}
near $s=s'$ for some $\abs{t} \in \itvcc{s'}{s}$. Clearly, the translation parameter will
be used to position the anisotropic element near $(s',E(s'))$, and the
orientation parameter to align with $(1,E'(s') s)$. If the length of the
element is $l$, then, due to the term $E''(t) s^2$, the most beneficial
height would be $l^2$. And, in fact, parabolic scaling yields precisely
this relation, i.e.,
\[
height \approx length^2.
\]

Hence, the main idea in the following will be to design a system which consists of
anisotropically shaped elements together with a directional parameter
to achieve the optimal approximation rate for cartoon-like images.

\section{Pyramid-Adapted Shearlet Systems}
\label{sec:pyram-adapt-shearl}

After we have set our benchmark for directional representation systems
in the sense of stating an optimality criteria for sparse
approximations of the cartoon-like image class $\cE^2_L(\R^d)$, we
next introduce classes of shearlet systems we claim behave optimally.
As already mentioned in the introduction of this chapter, optimally
sparse approximations were proven for a class of band-limited as well
as of compactly supported shearlet frames. For the definition of
cone-adapted discrete shearlets and, in particular, classes of
band-limited as well as of compactly supported shearlet frames leading
to optimally sparse approximations, we refer to Chapter \cite{Intro}. In this
section, we present the definition of discrete shearlets in 3D, from
which the mentioned definitions in the 2D situation can also be
directly concluded. As special cases, we then introduce particular
classes of band-limited as well as of compactly supported shearlet
frames, which will be shown to provide optimally approximations of
$\mathcal{E}_L^2(\R^3)$ and, with a slight modification which we will
elaborate on in Sect.~\ref{sec:some-extensions}, also for
$\mathcal{E}_{\alpha,L}^\beta(\R^3)$ with $1<\alpha \le \beta \le 2$.

\subsection{General Definition}
\label{sec:general-definition}

The first step in the definition of cone-adapted discrete 2D shearlets was a partitioning of 2D frequency domain into two
pairs of high-frequency cones and one low-frequency rectangle. We mimic this step by partitioning 3D frequency domain into
the three pairs of {\em pyramids} given by
\begin{align*}
  \mathcal{P} &=
      \{(\xi_1,\xi_2,\xi_3) \in \mathbb{R}^3 : |\xi_1| \ge 1,\,
      |\xi_2/\xi_1| \le 1,\, |\xi_3/\xi_1| \le 1\}, \\
 \tilde{\mathcal{P}}  &=   \{(\xi_1,\xi_2,\xi_3) \in \mathbb{R}^3 : |\xi_2| \ge 1,\, |\xi_1/\xi_2|
      \le 1,\,
      |\xi_3/\xi_2| \le 1\}, \\
 \breve{\mathcal{P}}   &=  \{(\xi_1,\xi_2,\xi_3) \in \mathbb{R}^3 : |\xi_3| \ge 1,\, |\xi_1/\xi_3|
      \le 1,\,
      |\xi_2/\xi_3| \le 1\},
 \end{align*}
and the centered cube
\[
\mathcal{C} = \{(\xi_1,\xi_2,\xi_3) \in \mathbb{R}^3 :
\norm[\infty]{(\xi_1,\xi_2, \xi_3)} < 1\}.
\]
This partition is illustrated in Fig.~\ref{fig:pyramids} which depicts the three pairs of pyramids
\begin{figure}[ht]
 \vspace*{-.5em}
\centering
\includegraphics{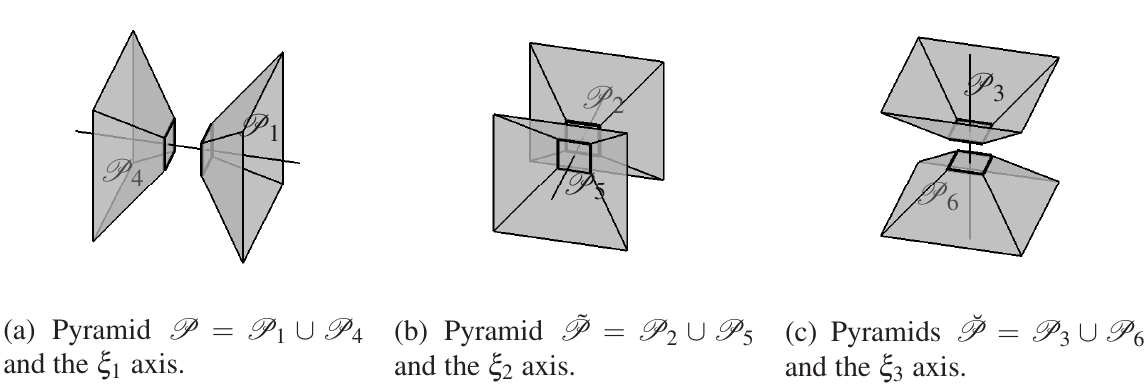}
\caption{The partition of the
  frequency domain: The `top' of the six pyramids.}
\label{fig:pyramids}
\end{figure}
and Fig.~\ref{fig:partition} depicting the centered cube surrounded by the three pairs of pyramids $\mathcal{P}$,
$\tilde{\mathcal{P}}$, and $\breve{\mathcal{P}}$.

\begin{figure}[ht]
 \centering
\includegraphics{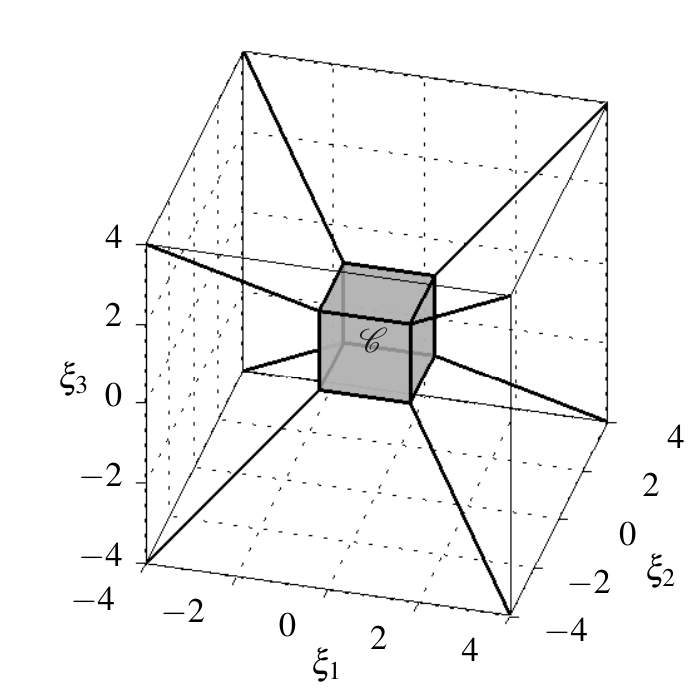}
      \caption{The partition of the frequency domain: The centered
cube $\cC$. The arrangement of the six pyramids is
          indicated by the `diagonal' lines. See
          Fig.~\ref{fig:pyramids} for a sketch of the pyramids.}
\label{fig:partition}
\end{figure}

The partitioning of frequency space into pyramids allows us to restrict the range of the shear parameters.
Without such a partitioning as, \eg in shearlet systems arising from the shearlet group, one must allow
arbitrarily large shear parameters, which leads to a treatment biased towards one axis. The defined partition
however enables restriction of the shear parameters to $[-\ceilsmall{2^{j/2}},\ceilsmall{2^{j/2}}]$,
similar to the definition of cone-adapted discrete shearlet systems. We would like to emphasize that this
approach is key to provide an almost uniform treatment of different directions in a sense of a `good'
approximation to rotation.

Pyramid-adapted discrete shearlets are scaled according to the \emph{paraboloidal scaling matrices} $A_{2^j}$,
$\tilde{A}_{2^j}$ or $\breve{A}_{2^j}$, $j \in \Z$ defined by
\[
  A_{2^j} = \begin{pmatrix} 2^j & 0 & 0 \\ 0 & \!\! 2^{j/2} & 0 \\ 0 & 0
    & \!\! 2^{j/2} \end{pmatrix},
  \quad
  \tilde{A}_{2^j} = \begin{pmatrix} 2^{j/2}\! & 0 & 0 \\ 0 & \!2^{j} & 0 \\ 0 & 0 & \!\!2^{j/2} \end{pmatrix},
  \; \text{and} \;\:\;
  \breve{A}_{2^j} = \begin{pmatrix} 2^{j/2}\! & 0 & 0 \\ 0 & \!\!2^{j/2} & 0 \\ 0 & 0 & \!\!2^{j} \end{pmatrix},
\]
and directionality is encoded by the \emph{shear matrices} $S_k$, $\tilde{S}_k$, or $\breve{S}_k$, $k = (k_1,k_2) \in \Z^2$,
given by
\[
S_k =\begin{pmatrix} 1\; & k_1\; & k_2 \\ 0 & 1 & 0\\ 0 & 0 & 1 \end{pmatrix},
\quad
\tilde{S}_k = \begin{pmatrix} 1 & 0 & 0 \\ k_1\; & 1\; & k_2 \\ 0 & 0 & 1 \end{pmatrix},
\quad \text{and} \quad
\breve{S}_k = \begin{pmatrix} 1 & 0 & 0 \\ 0 & 1 & 0 \\ k_1\; & k_2\; & 1 \end{pmatrix},
\]
respectively. The reader should note that these definitions are (discrete) special cases of the general setup
in \cite{coorbit}. The translation lattices will be defined through the following matrices: $M_c = \mathrm{diag}(c_1,c_2,c_2)$,
$\tilde{M}_c = \mathrm{diag}(c_2,c_1,c_2)$, and $\breve{M}_c = \mathrm{diag}(c_2,c_2,c_1)$, where $c_1>0$ and $c_2>0$.

We are now ready to introduce 3D shearlet systems, for which we will make use of the vector notation $\abs{k} \le K$
for $k = (k_1,k_2)$ and $K>0$ to denote $\abs{k_1} \le K$ \emph{and} $\abs{k_2} \le K$.

\begin{definition}
\label{def:discreteshearlets3d}
For $c=(c_1,c_2) \in (\R_+)^2$, the \emph{pyramid-adapted discrete shearlet system} $\SH(\phi,\psi,\tilde{\psi},\breve{\psi};c)$
generated by $\phi, \psi, \tilde{\psi}, \breve{\psi} \in L^2(\R^3)$ is defined by
\[
\SH(\phi,\psi,\tilde{\psi},\breve{\psi};c) = \Phi(\phi;c_1) \cup \Psi(\psi;c) \cup \tilde{\Psi}(\tilde{\psi};c) \cup \breve{\Psi}(\breve{\psi};c),
\]
where
\begin{align*}
\Phi(\phi;c_1) &= \setprop{\phi_m = \phi(\cdot-m)}{ m \in c_1\Z^3}, \\
\Psi(\psi;c) &= \setprop{\psi_{j,k,m} = 2^j {\psi}({S}_{k} {A}_{2^j}\cdot-m) }{ j \ge 0, |k| \le \ceilsmall{2^{j/2}}, m \in M_c \Z^3 }, \\
\tilde{\Psi}(\tilde{\psi};c) &= \{\tilde{\psi}_{j,k,m} = 2^j \tilde{\psi}(\tilde{S}_{k} \tilde{A}_{2^j}\cdot-m)
: j \ge 0, |k| \le \lceil 2^{j/2} \rceil, m \in \tilde{M}_c \Z^3 \},\\
\intertext{and}
\breve{\Psi}(\breve{\psi};c) &= \{\breve{\psi}_{j,k,m} = 2^j \breve{\psi}(\breve{S}_{k} \breve{A}_{2^j}\cdot-m)
: j \ge 0, |k| \le \lceil 2^{j/2} \rceil, m \in \breve{M}_c \Z^3 \},
\end{align*}
where $j \in \N_0$ and $k \in \Z^2$. For the sake of brevity, we will sometimes also use the notation $\psi_{\lambda}$
with $\lambda=(j,k,m)$.
\end{definition}

We now focus on two different special classes of pyramid-adapted discrete shearlets leading to the class of band-limited
shearlets and the class of compactly supported shearlets for which optimality of their approximation properties with
respect to cartoon-like images will be proven in Sect.~\ref{sec:optim-sparse-appr}.

\subsection{Band-Limited 3D Shearlets}
\label{subsec:bandlimited}

Let the shearlet generator $\psi \in L^2(\R^3)$ be defined by
\begin{equation}
 \label{eq:bandlimitedshearlets}
\hat \psi(\xi) = \hat \psi_1(\xi_1)\hat\psi_2\Bigl(\frac{\xi_2}{\xi_1}\Bigr)\hat \psi_2
\Bigl(\frac{\xi_3}{\xi_1}\Bigr),
\end{equation}
where $\psi_1$ and $\psi_2$ satisfy the following assumptions:
\begin{enumerate}[(i)]
\item $\hat \psi_1 \in C^{\infty}(\R)$, $\supp \hat \psi_1 \subset  \itvccs{-4}{-\frac{1}{2}} \cup \itvccs{\frac{1}{2}}{4}$, and
\begin{equation}\label{eq:calder}
\sum_{j \ge 0} \abs{\hat \psi_1 (2^{-j}\xi)}^2 = 1 \quad \text{for }
\abs{\xi} \ge 1, \xi \in \R. 
\end{equation}
\item $\hat \psi_2 \in C^{\infty}(\R)$, $\supp \hat \psi_2 \subset \itvcc{-1}{1}$, and
\begin{equation}\label{eq:shift}
\sum_{l=-1}^{1} \abs{\hat \psi_2(\xi+l)}^2 = 1 \quad
  \text{for } \abs{\xi} \leq 1, \xi \in \R.
\end{equation}
\end{enumerate}
Thus, in frequency domain, the band-limited function  $\psi \in L^2(\R^3)$ is almost a tensor product
of one wavelet with two `bump' functions, thereby a canonical generalization of the classical band-limited
2D shearlets, see also Chapter \cite{Intro}. This implies the support in frequency domain to have a needle-like
shape with the wavelet acting in radial direction ensuring high directional selectivity, see also Fig.~\ref{fig:tiling-3d}.
\begin{figure}[ht]
\centering
 \includegraphics{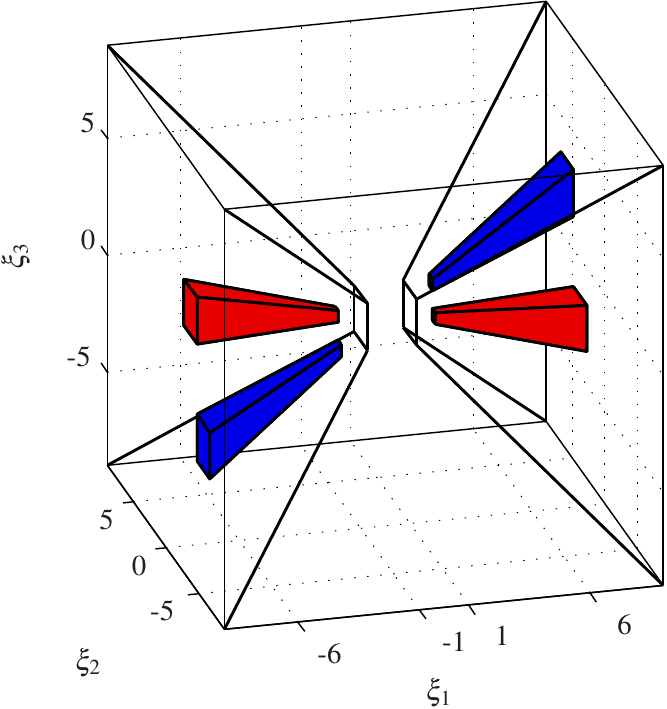}
\caption{Support of two shearlet elements $\psi_{j,k,m}$ in the
  frequency domain. The two shearlet elements have the same scale
  parameter $j=2$, but different shearing parameters $k=(k_1,k_2)$.}
  \label{fig:tiling-3d}
\end{figure}
The derivation from being a tensor product, i.e., the substitution of
$\xi_2$ and $\xi_3$ by the quotients $\xi_2/\xi_1$ and $\xi_3/\xi_1$,
respectively, in fact ensures a favorable behavior with respect to the
shearing operator, and thus a tiling of frequency domain which leads
to a tight frame for $L^2(\R^3)$.

A first step towards this result is the following observation.

\begin{theorem}[\cite{GL10_3d}]\label{thm:construction-bandlimited-3d}
Let $\psi$ be a band-limited shearlet defined as in this subsection. Then the family of functions
\[
\Psi(\psi) = \{\psi_{j,k,m} : j \ge 0, |k| \leq \ceilsmall{2^{j/2}}, m \in \tfrac{1}{8}\Z^3\}
\]
forms a tight frame for $\check L^2(\cP):=\{f \in L^2(\R^3): \supp \hat f \subset \cP\}$.
\end{theorem}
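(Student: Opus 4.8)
The plan is to establish the tight-frame identity
\[
\sum_{j,k,m}\abs{\langle f,\psi_{j,k,m}\rangle}^2 = A\,\norm[L^2]{f}^2
\qquad\text{for all } f\in\check L^2(\cP)
\]
by moving to the frequency side with Parseval's theorem and then peeling off the three sums --- over translations $m$, shears $k$, and scales $j$ --- in turn. The first ingredient is the Fourier transform of a shearlet atom: writing $M=S_kA_{2^j}$ and using $\abs{\det M}=2^{2j}$, one finds
\[
\hat\psi_{j,k,m}(\xi)=2^{-j}\,\expo{-2\pi i\innerprods{m}{M^{-T}\xi}}\,\hat\psi(M^{-T}\xi),
\]
and the explicit evaluation of $M^{-T}\xi=S_k^{-T}A_{2^j}^{-1}\xi$ yields
\[
\hat\psi(M^{-T}\xi)=\hat\psi_1(2^{-j}\xi_1)\,\hat\psi_2\Bigl(2^{j/2}\tfrac{\xi_2}{\xi_1}-k_1\Bigr)\,\hat\psi_2\Bigl(2^{j/2}\tfrac{\xi_3}{\xi_1}-k_2\Bigr).
\]
The decisive structural feature is that the shear parameter enters as an \emph{integer} shift in each of the two bump variables.

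For fixed $j$ and $k$ I would then perform the sum over $m\in\tfrac18\Z^3$. Substituting $\eta=M^{-T}\xi$ turns $\langle f,\psi_{j,k,m}\rangle$ into the $m$th Fourier coefficient of $\eta\mapsto 2^{j}\hat f(M^{T}\eta)\,\overline{\hat\psi(\eta)}$, the Jacobian $2^{2j}$ cancelling against the factor $2^{-j}$ above to leave a single power $2^{j}$. The support hypotheses in (i)--(ii) confine $\supp\hat\psi$ to $\itvccs{-4}{4}^3$, which is exactly a fundamental domain of the lattice $8\Z^3$ dual to $\tfrac18\Z^3$; hence the exponentials $\expo{-2\pi i\innerprods{m}{\cdot}}$ form, after normalization, an orthonormal basis on that domain, Parseval for Fourier series applies with no aliasing, and
\[
\sum_{m\in\frac18\Z^3}\abs{\langle f,\psi_{j,k,m}\rangle}^2 = A\int_{\R^3}\abs{\hat f(\xi)}^2\,\bigl|\hat\psi(M^{-T}\xi)\bigr|^2\,\D\xi,
\]
where $A=8^3$ reflects the density of $\tfrac18\Z^3$.

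Summing this over $j\ge0$ and $\abs{k}\le\ceilsmall{2^{j/2}}$ reduces the theorem to the pointwise identity $\Delta(\xi)=1$ for a.e.\ $\xi\in\cP$, where $\Delta(\xi)=\sum_{j,k}\bigl|\hat\psi(M^{-T}\xi)\bigr|^2$. By the tensor factorization above, the $k_1$- and $k_2$-sums separate, and the partition-of-unity condition \eqref{eq:shift} gives $\sum_{k\in\Z}\abs{\hat\psi_2(w-k)}^2=1$ for every $w\in\R$ (the support $\supp\hat\psi_2\subset\itvccs{-1}{1}$ limiting each sum to finitely many terms). The remaining scale sum is then $\sum_{j\ge0}\abs{\hat\psi_1(2^{-j}\xi_1)}^2=1$, which holds by the Calder\'on-type condition \eqref{eq:calder} precisely because $\abs{\xi_1}\ge1$ on $\cP$. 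Combining the three identities gives $\Delta\equiv1$ on $\cP$, and hence the asserted tight frame with bound $A=8^3$.

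The step I expect to be the main obstacle is showing that the \emph{truncated} shear sum $\abs{k}\le\ceilsmall{2^{j/2}}$ retains every nonzero term of the full $\Z$-sum, so that the restricted sum still equals $1$. This is exactly where the pyramid is used: for $\xi$ in the interior of $\cP$ one has $\abs{w}=2^{j/2}\abs{\xi_2/\xi_1}<2^{j/2}$, whence any index with $\hat\psi_2(w-k_1)\ne0$ satisfies $\abs{k_1}<2^{j/2}+1\le\ceilsmall{2^{j/2}}+1$, i.e.\ $\abs{k_1}\le\ceilsmall{2^{j/2}}$; the same estimate bounds $k_2$. Thus the restriction of the shear range to $\abs{k}\le\ceilsmall{2^{j/2}}$ is perfectly compatible with the partition of unity, while the boundary faces $\abs{\xi_2/\xi_1}=1$ or $\abs{\xi_3/\xi_1}=1$ --- where a term could be clipped --- form a null set and do not affect the $L^2$ identity.
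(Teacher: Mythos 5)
Your proof is correct and follows essentially the same route as the paper's: reduce to the frequency-side partition of unity $\sum_{j,k}|\hat\psi(S_k^TA_{2^j}^{-1}\xi)|^2=1$ on $\cP$ via the Calder\'on condition \eqref{eq:calder} and the shift condition \eqref{eq:shift}, then use $\supp\hat\psi\subset\itvcc{-4}{4}^3$ together with the lattice $\tfrac{1}{8}\Z^3$ to dispose of the translation sum. You merely spell out the Parseval/no-aliasing step and the truncated-shear-range verification that the paper's two-line argument leaves implicit.
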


\begin{proof}
For each $j \ge 0$, equation \eqref{eq:shift} implies that
\[
\sum_{k = -\ceil{2^{j/2}}}^{\ceil{2^{j/2}}} |\hat \psi_2(2^{j/2}\xi + k)|^2 = 1, \quad \text{for}
\,\, |\xi| \leq 1.
\]
Hence, using equation \eqref{eq:calder}, we obtain
\begin{eqnarray*}
\lefteqn{\sum_{j \ge 0} \sum_{k_1,k_2=-\ceil{2^{j/2}}}^{\ceil{2^{j/2}}} |\hat \psi(S^T_kA^{-1}_{2^j}\xi)|^2}\\
& = & \sum_{j \ge 0} |\hat \psi_1 (2^{-j}\xi_1)|^2| \sum_{k_1=-\ceil{2^{j/2}}}^{\ceil{2^{j/2}}} |\hat \psi_2(2^{j/2}\tfrac{\xi_2}{\xi_1}+k_1)|^2
\sum_{k_2=-\ceil{2^{j/2}}}^{\ceil{2^{j/2}}} |\hat\psi_2(2^{j/2}\tfrac{\xi_2}{\xi_1}+k_2)|^2\\
& = & 1,
\end{eqnarray*}
for $\xi=(\xi_1,\xi_2,\xi_3) \in \cP$. Using this equation together with the fact that $\hat \psi$
is supported inside $\itvcc{-4}{4}^3$ proves the theorem. 
\end{proof}

By Thm.~\ref{thm:construction-bandlimited-3d} and a change of variables,
we can construct shearlet frames for $\check L^2(\cP)$, $\check
L^2(\tilde\cP)$, and $\check L^2(\breve\cP)$, respectively.
Furthermore, wavelet theory provides us with many choices of $\phi \in
L^2(\R^3)$ such that $\Phi(\phi;\tfrac{1}{8})$ forms a frame for
$\check L^2(\cC)$. Since $\R^3 = \cC \cup \cP \cup \tilde \cP \cup
\breve \cP$ as a disjoint union, we can express any function $f \in
L^2(\R^3)$ as $f = P_\cC f + P_{\cP} f + P_{\tilde \cP} f + P_{\breve
  \cP} f$, where each component corresponds to the orthogonal
projection of $f$ onto one of the three pairs of pyramids or the
centered cube in the frequency space. We then expand each of these
components in terms of the corresponding tight frame. Finally, our
representation of $f$ will then be the sum of these four expansions.
We remark that the projection of $f$ onto the four subspaces can lead
to artificially slow decaying shearlet coefficients; this will, \eg be
the case if $f$ is in the Schwartz class. This problem does in fact
not occur in the construction of compactly supported shearlets.

\subsection{Compactly Supported 3D Shearlets}
\label{subsec:compactsupport}

It is easy to see that the general form
\eqref{eq:bandlimitedshearlets} does never lead to a function which is
compactly supported in spatial domain. Thus, we need to deviate this
form by now taking indeed exact tensor products as our shearlet
generators, which has the additional benefit of leading to fast
algorithmic realizations. This however causes the problem that the
shearlets do not behave as favorable with respect to the shearing
operator as in the previous subsection, and the question arises
whether they actually do lead to at least a frame for $L^2(\R^3)$. The
next results shows this to be true for an even much more general form
of shearlet generators including compactly supported separable
generators. The attentive reader will notice that this theorem even covers
the class of band-limited shearlets introduced in
Sect.~\ref{subsec:bandlimited}.

\begin{theorem}[\!\cite{KLL10}]\label{thm:construction_suff}
Let $\phi, \psi \in L^2({\mathbb R}^3)$ be functions such that
\[
|\hat\phi(\xi)| \le C_1 \min\{1,|\xi_1|^{-\gamma}\} \cdot \min\{1,|\xi_2|^{-\gamma}\} \cdot \min\{1,|\xi_3|^{-\gamma}\},
\]
and
\begin{multline*}
|\hat\psi(\xi)| \le C_2 \cdot \min\{1,|\xi_1|^{\delta}\} \cdot \min\{1,|\xi_1|^{-\gamma}\} \cdot \min\{1,|\xi_2|^{-\gamma}\} \cdot \min\{1,|\xi_3|^{-\gamma}\},
\end{multline*}
for some constants $C_1,C_2 > 0$ and $\delta>2\gamma>6$. Define $\tilde{\psi}(x) = \psi(x_2,x_1,x_3)$ and $\breve{\psi}(x) =
\psi(x_3,x_2,x_1)$ for $x=(x_1,x_2,x_3) \in \R^3$. Then there exists a
constant $c_0>0$ such that the shearlet system
$SH(\phi,\psi,\tilde{\psi},\breve{\psi};c)$ forms a frame for $L^2({\mathbb R}^3)$ for all $c=(c_1,c_2)$ with $c_2 \leq c_1 \leq
c_0$ provided that there exists a positive constant $M>0$ such that
\begin{align}
\label{eq:lower-Linf-bound}
|\hat\phi(\xi)|^2 + \sum_{j \ge 0}\sum_{k_1,k_2 \in K_j} |\hat \psi({S_k^T} A_{2^{j}}\xi)|^2+|\hat{\tilde{\psi}}(\tilde S_k^T
\tilde{A}_{2^{j}}\xi)|^2 +|\hat{\breve{\psi}}(\breve S_k^T \breve{A}_{2^{j}}\xi)|^2 > M
\end{align}
for a.e $\xi \in {\mathbb R}^3$, where $K_j:=\itvcc{-\ceilsmall{2^{j/2}}}{\ceilsmall{2^{j/2}}}$.
\end{theorem}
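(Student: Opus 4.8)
The plan is to verify the frame inequality
\[
A\norm{f}^2 \le \sum_{\lambda}\abs{\innerprod{f}{\psi_\lambda}}^2 \le B\norm{f}^2, \qquad f\in L^2(\R^3),
\]
directly in the Fourier domain, in the spirit of the construction of compactly supported shearlet frames in 2D. First I would record the standard lattice--Plancherel identity: for a generator $g\in L^2(\R^3)$ and a lattice $M\Z^3$ with dual lattice $M^{-T}\Z^3$,
\[
\sum_{m\in\Z^3}\abs{\innerprod{f}{g(\cdot-Mm)}}^2 = \frac{1}{\abs{\det M}}\sum_{n\in\Z^3}\int_{\R^3}\hat f(\xi)\,\overline{\hat f(\xi+M^{-T}n)}\;\overline{\hat g(\xi)}\,\hat g(\xi+M^{-T}n)\,\D\xi.
\]
Applying this to each scale--shear atom $\psi_{j,k,\cdot}$, and likewise to $\phi$, $\tilde\psi$, and $\breve\psi$, and separating the $n=0$ term, the entire frame sum splits into a \emph{diagonal} part and an \emph{off-diagonal} remainder.

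For the diagonal part, the $2^{-2j}$ coming from the normalization cancels the $2^{2j}$ from the covolume of the sheared, anisotropically dilated translation lattice, so that the $n=0$ contributions add up to $\tfrac{1}{\abs{\det M_c}}\int_{\R^3}\abs{\hat f(\xi)}^2\,\Gamma(\xi)\,\D\xi$, where $\Gamma(\xi)$ is precisely the left-hand side of \eqref{eq:lower-Linf-bound}. The hypothesis $\Gamma(\xi)>M$ bounds this below by $\tfrac{M}{\abs{\det M_c}}\norm{f}^2$, and for the matching upper bound I would show $\esssup_\xi\Gamma(\xi)=:\tilde M<\infty$; this follows from the decay hypotheses together with the bounded overlap of the sheared, dilated frequency supports, the exponent $\gamma>3$ securing convergence of the scale--shear sum in each of the three frequency variables. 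By the coordinate symmetry $\tilde\psi(x)=\psi(x_2,x_1,x_3)$ and $\breve\psi(x)=\psi(x_3,x_2,x_1)$, it is enough to carry out these estimates for the $\psi$-pyramid and for the low-frequency term $\phi$.

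The off-diagonal remainder is where the real work lies. Applying the Cauchy--Schwarz inequality to each integral with $n\ne0$ and symmetrizing in $\xi\leftrightarrow\xi+M_c^{-T}n$, one dominates the whole remainder by $\tfrac{R(c)}{\abs{\det M_c}}\norm{f}^2$, where
\[
R(c)=\esssup_{\xi\in\R^3}\ \sum_{j,k}\ \sum_{n\ne0}\abs{\hat\psi(S_k^{T}A_{2^{j}}\xi)}\,\abs{\hat\psi(S_k^{T}A_{2^{j}}(\xi+M_c^{-T}n))}\ +\ (\text{the }\phi,\ \tilde\psi,\ \breve\psi\ \text{terms}).
\]
The crux is to show that $R(c)<\infty$ and, decisively, that $R(c)\to0$ as $c_1,c_2\to0$. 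Refining the translation lattice coarsens the dual lattice $M_c^{-T}\Z^3$, so every $n\ne0$ forces a large frequency separation between $\hat\psi$ and its shifted copy, and the product of the two tails becomes summable and small precisely because $\delta>2\gamma>6$: the factor $\min\{1,\abs{\xi_1}^{\delta}\}$ suppresses the accumulation of shears near the plane $\xi_1=0$, while the decay $\abs{\xi_i}^{-\gamma}$ with $\gamma>3$ dominates both the $2^{2j}$ anisotropic Jacobian and the $\asymp 2^{j}$ shears per scale when summed over $j\ge0$ and $\abs{k}\le\ceilsmall{2^{j/2}}$. I expect this to be the main obstacle, because the shear parameter distorts the dual-lattice shifts differently at every scale, and one must extract a bound on $R(c)$ that is uniform in $\xi$ and genuinely decreasing in $c$.

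Combining the two parts yields, for all $f\in L^2(\R^3)$,
\[
\frac{M-R(c)}{\abs{\det M_c}}\,\norm{f}^2 \le \sum_{\lambda}\abs{\innerprod{f}{\psi_\lambda}}^2 \le \frac{\tilde M+R(c)}{\abs{\det M_c}}\,\norm{f}^2.
\]
Since $R(c)\to0$, I would choose $c_0>0$ so small that $R(c)\le M/2$ for all $c=(c_1,c_2)$ with $c_2\le c_1\le c_0$; then the lower bound is strictly positive and the upper bound finite, which shows that $\SH(\phi,\psi,\tilde\psi,\breve\psi;c)$ is a frame for $L^2(\R^3)$ for the asserted range of $c$.
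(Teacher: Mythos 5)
The paper does not actually prove this theorem: it is quoted from \cite{KLL10}, and the only indication given (in the discussion following Example~\ref{example:compact-for-pyramid-3D}) is that the proof ``basically follows from Daubechies' classical estimates for wavelet frames'' together with a frequency-domain covering argument. Your proposal is exactly that route --- the fibered Plancherel identity split into a diagonal term, bounded below by hypothesis \eqref{eq:lower-Linf-bound} and above by a finiteness estimate, plus an off-diagonal remainder $R(c)$ driven to zero by shrinking the sampling constants --- so in strategy you coincide with what the paper intends.

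Two points deserve correction or emphasis. First, your off-diagonal term is miswritten: applying the lattice identity to the atom $2^j\psi(S_kA_{2^j}\cdot-m)$, $m\in M_c\Z^3$, and changing variables, the dual-lattice shift enters the argument of $\hat\psi$ as a translation by the \emph{fixed} vector $M_c^{-T}n$, so the relevant product is $\abs{\hat\psi(S_k^TA_{2^j}\xi)}\,\abs{\hat\psi(S_k^TA_{2^j}\xi+M_c^{-T}n)}$ and not $\abs{\hat\psi(S_k^TA_{2^j}(\xi+M_c^{-T}n))}$; the $(j,k)$-dependent distortion you worry about affects only the shift of $\hat f$, which disappears after Cauchy--Schwarz and the symmetrization you mention. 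This is precisely the structural fact that makes the Daubechies argument tractable: one needs only to control the single function $\Gamma(a)=\esssup_\xi\sum_{j,k}\abs{\hat\psi(S_k^TA_{2^j}\xi)}\abs{\hat\psi(S_k^TA_{2^j}\xi+a)}$ (plus its $\phi$, $\tilde\psi$, $\breve\psi$ analogues) and show $\sum_{n\ne0}\bigl[\Gamma(M_c^{-T}n)\Gamma(-M_c^{-T}n)\bigr]^{1/2}\to0$ as $c\to0$, so the ``main obstacle'' you identify is in fact milder than you suggest. Second, the genuine analytic content --- deducing from $\delta>2\gamma>6$ that $\Gamma(0)<\infty$ and that $\Gamma(a)$ decays in each coordinate of $a$ fast enough for the dual-lattice sum to converge and vanish in the limit, uniformly over the scale--shear sum with its $\asymp 2^{j}$ shears per scale --- is asserted but not carried out. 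That estimate is where the whole proof lives; your sketch, like the paper's own one-sentence account, defers it to \cite{KLL10}.
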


We next provide an example of a family of compactly supported shearlets satisfying the assumptions of Thm.~\ref{thm:construction_suff}.
However, for applications, one is typically not only interested in whether a system forms a frame, but in the ratio of the associated frame
bounds. In this regard, these shearlets also admit a theoretically derived estimate for this ratio which is reasonably close to $1$, i.e.,
to being tight. The numerically derived ratio is even significantly closer as expected.

\begin{example}
\label{example:compact-for-pyramid-3D}
Let $K, L \in \N$ be such that $L \ge 10$ and $\frac{3L}{2} \le K \le 3L-2$, and define a shearlet $\psi \in L^2(\R^3)$ by
\begin{equation}
\hat{\psi}(\xi) =
m_1(4\xi_1)\hat{\phi}(\xi_1)\hat{\phi}(2\xi_2)\hat\phi(2\xi_3), \quad
\xi = (\xi_1,\xi_2,\xi_3) \in \R^3,\label{eq:def-psi-compact}
\end{equation}
where the function $m_0$ is the low pass filter satisfying
\[
|m_0(\xi_1)|^2 = \cos^{2K}(\pi\xi_1)) \sum_{n=0}^{L-1} \binom{K-1+n}{ 
  n} \sin^{2n}(\pi\xi_1),
\]
for $\xi_1 \in \R,$ the function $m_1$ is the associated bandpass filter defined by
\[
|m_1(\xi_1)|^2 = |m_0(\xi_1+1/2)|^2, \quad \xi_1 \in \R,
\]
and $\phi$  the scaling function is given by
\[
\hat{\phi}(\xi_1) = \prod_{j=0}^{\infty} m_0(2^{-j}\xi_1), \quad \xi_1 \in \R.
\]

In \cite{KKL10a,KLL10} it is shown that $\phi$ and $\psi$ indeed are compactly
supported. Moreover, we have the following result.
\begin{theorem}[\cite{KLL10}]
  Suppose $\psi \in L^2(\R^3)$ is defined as in
  (\ref{eq:def-psi-compact}). Then there exists a sampling constant
  $c_0>0$ such that the shearlet system $\Psi(\psi;c)$ forms a frame
  for $\check L^2(\cP)$ for any translation matrix $M_c$ with
  $c=(c_1,c_2) \in (\R_+)^2$ and $c_2 \le c_1 \le {c}_0$.
\end{theorem}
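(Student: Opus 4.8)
The plan is to deduce the claim from the general sufficient condition for the frame property recorded in Theorem~\ref{thm:construction_suff}, specialized to a single pyramid. Concretely, I would first isolate the single-pyramid version of that result: if $\hat\psi$ obeys the decay estimate of Theorem~\ref{thm:construction_suff} and if the associated Calder\'on-type sum $\Phi(\xi) = \sum_{j\ge 0}\sum_{|k|\le\ceilsmall{2^{j/2}}}\abs{\hat\psi(S_k^T A_{2^j}^{-1}\xi)}^2$ is bounded below by a positive constant for a.e.\ $\xi\in\cP$, then for all sampling densities $c_2\le c_1\le c_0$ with $c_0$ small enough, $\Psi(\psi;c)$ is a frame for $\check L^2(\cP)$. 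The proof of this statement follows the standard scheme: one writes the frame operator in the Fourier domain, splits it into a diagonal Fourier multiplier whose symbol is $\abs{\det M_c}^{-1}\Phi(\xi)$ plus an off-diagonal remainder indexed by the dual lattice, and shows that the remainder has operator norm tending to $0$ as $c_1\to 0$. Finiteness of $\Phi$ then gives the upper bound and the lower bound on $\Phi$ gives the lower frame bound on $\check L^2(\cP)$. It therefore remains to verify the two hypotheses for the specific generator \eqref{eq:def-psi-compact}.

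For the decay estimate (and hence the upper/Bessel bound, which holds on all of $L^2(\R^3)$ and \emph{a fortiori} on $\check L^2(\cP)$), I would exploit the separable structure $\hat\psi(\xi)=m_1(4\xi_1)\hat\phi(\xi_1)\hat\phi(2\xi_2)\hat\phi(2\xi_3)$. Standard Daubechies--Cohen estimates for the refinable function $\hat\phi=\prod_{l\ge 0}m_0(2^{-l}\cdot)$ give polynomial decay $\abs{\hat\phi(\eta)}\lesssim(1+\abs{\eta})^{-\gamma}$ with $\gamma$ increasing in $L$, which yields the three factors $\min\{1,\abs{\xi_i}^{-\gamma}\}$. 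The band-pass factor $m_1(4\xi_1)$ vanishes at $\xi_1=0$ to order $\sim K$, since $\abs{m_1}^2=\abs{m_0(\cdot+1/2)}^2$ inherits the zero of order $2K$ of $\cos^{2K}(\pi\cdot)$ at the Nyquist frequency; this supplies the factor $\min\{1,\abs{\xi_1}^{\delta}\}$. The parameter constraints $L\ge 10$ and $\tfrac{3L}{2}\le K\le 3L-2$ are precisely what guarantee $\delta>2\gamma>6$, so the hypotheses of Theorem~\ref{thm:construction_suff} are met.

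The hard part is the lower bound on $\Phi$ over $\cP$. Writing out $S_k^T A_{2^j}^{-1}\xi=(2^{-j}\xi_1,\,k_1 2^{-j}\xi_1+2^{-j/2}\xi_2,\,k_2 2^{-j}\xi_1+2^{-j/2}\xi_3)$ and inserting it into \eqref{eq:def-psi-compact}, the summand factors into a radial wavelet profile $\abs{m_1(4\cdot 2^{-j}\xi_1)\hat\phi(2^{-j}\xi_1)}^2$ in the $\xi_1$-variable and two transverse bump profiles in $k_1$ and $k_2$. The radial sum $\sum_{j\ge 0}\abs{m_1(4\cdot 2^{-j}\xi_1)\hat\phi(2^{-j}\xi_1)}^2$ is a one-dimensional wavelet Calder\'on sum and is bounded below for $\abs{\xi_1}\ge 1$, i.e.\ on the relevant part of $\cP$. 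The transverse sums sample the low-pass profile $\hat\phi$ at the scale $j\approx\log_2\abs{\xi_1}$ selected by the radial factor; at that scale the sampling step $2^{-j}\xi_1$ in the sheared variable is commensurate with the width of $\hat\phi$, so the samples do not all fall into a zero of $\hat\phi$ and the sums stay bounded below. Making this uniform over all $\xi\in\cP$ and over the coupled summation in $j$ and $k$ is the delicate step, and it is exactly here that the explicit construction of $m_0$ (nonnegativity and sufficient overlap of the shifted copies of $\abs{\hat\phi}^2$) together with the constraints on $L$ and $K$ enter. This is the compactly supported, approximate analogue of the exact partition-of-unity computation carried out in the band-limited setting in Theorem~\ref{thm:construction-bandlimited-3d}. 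Once $\Phi$ is pinned between two positive constants on $\cP$, the general single-pyramid theorem yields the frame property of $\Psi(\psi;c)$ for $\check L^2(\cP)$ for all $c_2\le c_1\le c_0$.
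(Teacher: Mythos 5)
Your proposal follows essentially the same route as the paper's (sketched) proof: verify the decay hypotheses of Thm.~\ref{thm:construction_suff} via upper and lower estimates on the trigonometric polynomial $m_0$ and the refinable factor $\hat\phi$, establish the lower bound of the Calder\'on-type sum on the pyramid $\cP$, and then invoke a single-pyramid variant of Thm.~\ref{thm:construction_suff}. The extra detail you supply (the diagonal-plus-remainder decomposition of the frame operator and the separate treatment of the radial and transverse factors in the lower bound) is consistent with, and fleshes out, what the paper delegates to \cite{Dau92,KKL10a,KLL10}.
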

\begin{proof}[sketch]
  Using upper and lower estimates of the absolute value of the
  trigonometric polynomial $m_0$ (cf.~\cite{Dau92,KKL10a}), one can
  show that $\psi$ satisfies the hypothesis of
  Thm.~\ref{thm:construction_suff} as well as
  \[ \sum_{j \ge 0}\sum_{k_1,k_2 \in K_j} |\hat \psi({S_k^T}
  A_{2^{j}}\xi)|^2 > M \qquad \text{for all $\xi \in \cP$,}\] where
  $M>0$ is a constant, for some sufficiently small $c_0>0$. We note
  that this
  inequality is an analog to~(\ref{eq:lower-Linf-bound}) for the
  pyramid $\cP$. Hence, by a result similar to
  Thm.~\ref{thm:construction_suff}, but for the case, where we
  restrict to the pyramid $\check L^2(\cP)$, it then follows that
  $\Psi(\psi;c)$ is a frame.
\end{proof}

To obtain a frame for all of $L^2(\R^3)$ we simply set
$\tilde{\psi}(x) = \psi(x_2,x_1,x_3)$ and $\breve{\psi}(x) =
\psi(x_3,x_2,x_1)$ as in Thm.~\ref{thm:construction_suff}, and choose
$\phi(x)=\phi(x_1)\phi(x_2)\phi(x_3)$ as scaling function for
$x=(x_1,x_2,x_3) \in \R^3$. Then the corresponding shearlet system
$SH(\phi,\psi,\tilde{\psi},\breve{\psi};c,\scp)$ forms a frame for
$L^2(\R^3)$. The proof basically follows from Daubechies' classical
estimates for wavelet frames in \cite[\S 3.3.2]{Dau92} and the fact
that anisotropic and sheared windows obtained by applying the scaling
matrix $A_{2^j}$ and the shear matrix $S^T_k$ to the effective
support\footnote{Loosely speaking, we say that $f \in L^2(\R^d)$
    has \emph{effective} support on $B$ if the ratio
    $\norm[L^2]{f\chi_B}/\norm[L^2]{f}$ is ``close'' to $1$.} of
$\hat{\psi}$ cover the pyramid $\cP$ in the frequency domain. The same
arguments can be applied to each of shearlet generators $\psi$,
$\tilde \psi$ and $\breve{\psi}$ as well as the scaling function
$\phi$ to show a covering of the entire frequency domain and thereby
the frame property of the pyramid-adapted shearlet system for
$L^2(\R^3)$. We refer to \cite{KLL10} for the detailed proof.

Theoretical and numerical estimates of frame bounds for a particular parameter choice are shown in
Table~\ref{tab:frame-bounds-3d}. We see that the theoretical
  estimates are overly pessimistic, since they are a factor $20$ larger
  than the numerical estimated frame bound ratios. We mention that
for 2D the estimated frame bound ratios are approximately $1/10$ of the
ratios found in Table~\ref{tab:frame-bounds-3d}.



\begin{table}
\caption{Frame bound ratio for the shearlet frame from
  Example~\ref{example:compact-for-pyramid-3D} with parameters $K=39, L=19$.}
\label{tab:frame-bounds-3d}       
%
%
\begin{tabular}{p{3.5cm}p{3cm}p{5.cm}}
\hline\noalign{\smallskip}
Theoretical ($B/A$)  &  Numerical ($B/A$)  & Translation
constants ($c_1,c_2$) \\
\noalign{\smallskip}\hline\noalign{\smallskip}
345.7                     &            13.42               &                        (0.9,   0.25)\\

226.6                 &                13.17        &                               (0.9,   0.20)\\
226.4                     &            13.16     &                                  (0.9,   0.15)\\
226.4                     &            13.16         &                              (0.9,   0.10)\\
\noalign{\smallskip}\hline\noalign{\smallskip}
\end{tabular}
\end{table}

\end{example}

\subsection{Some Remarks on Construction Issues}


The compactly supported shearlets $\psi_{j,k,m}$ from
  Example~\ref{example:compact-for-pyramid-3D} are, in spatial domain,
  of size $2^{-j/2}$ times $2^{-j/2}$ times $2^{-j}$ due to the
  scaling matrix $A_{2^j}$. This reveals that the shearlet elements
  will become `plate-like' as $j \to \infty$. For an illustration, we
  refer to Fig.~\ref{fig:support-size-3d-shearlet}. Band-limited
  shearlets, on the other hand, do not have compactly support, but
  their effective support (the region where the energy of the function
  is concentrated) in spatial domain will likewise be of size $2^{-j/2}$
  times $2^{-j/2}$ times $2^{-j}$ owing to their smoothness in frequency domain.
\begin{figure}[ht]
\centering
\includegraphics[width=7cm]{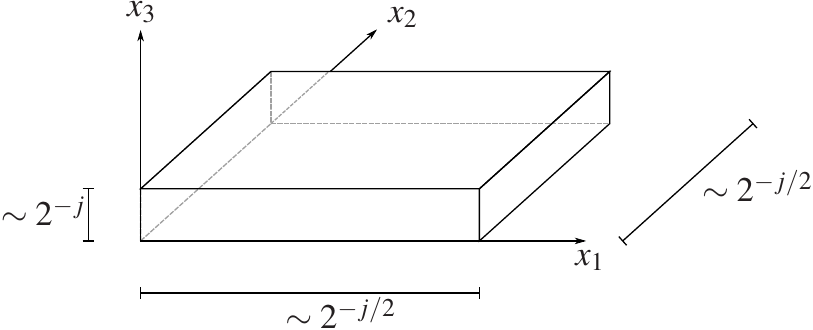}
\caption{Support of a shearlet $\breve{\psi}_{j,0,m}$ from
  Example~\ref{example:compact-for-pyramid-3D}.}
\label{fig:support-size-3d-shearlet}
\end{figure}
Contemplating about the fact that intuitively such shearlet elements
should provide sparse approximations of surface singularities, one
could also think of using the scaling matrix
$A_{2^j}=\diag{2^j,2^j,2^{j/2}}$ with similar changes for $\tilde
A_{2^j}$ and $ \breve A_{2^j}$ to derive `needle-like' shearlet
elements in space domain. These would intuitively behave favorable with respect to the
other type of anisotropic features occurring in 3D, that is
curvilinear singularities. Surprisingly, we will show in Sect.~\ref{sec:extensions-3d} that for optimally sparse approximation plate-like
shearlets, i.e., shearlets associated with scaling matrix
$A_{2^j}=\diag{2^j,2^{j/2},2^{j/2}}$, and similarly $\tilde A_{2^j}$
and $ \breve A_{2^j}$ are sufficient.

Let us also mention that, more generally, non-paraboloidal scaling
matrices of the form $A_{j}=\diag{2^j,2^{a_1 j},2^{a_2 j}}$ for $0 <
a_1, a_2 \le 1$ can be considered. The parameters $a_1$ and $a_2$
allow precise control of the aspect ratio of the shearlet elements,
ranging from very plate-like to very needle-like, according to the
application at hand, \ie choosing the shearlet-shape that is the best
matches the geometric characteristics of the considered data. The case
$a_i <1$ is covered by the setup of the multidimensional shearlet
transform explained in Chapter \cite{coorbit}.

Let us finish this section with a general thought on the construction
of band-limited (not separable) tight shearlet frames versus compactly
supported (non-tight, but separable) shearlet frames. It seems that
there is a trade-off between {\em compact support} of the shearlet
generators, {\em tightness} of the associated frame, and {\em
  separability} of the shearlet generators. In fact, even in 2D, all
known constructions of tight shearlet frames do not use separable
generators, and these constructions can be shown to {\em not} be
applicable to compactly supported generators. Presumably, tightness is
difficult to obtain while allowing for compactly supported generators,
but we can gain separability which leads to fast algorithmic
realizations, see Chapter \cite{shearlab}. If we though allow
non-compactly supported generators, tightness is possible as shown in
Sect.~\ref{subsec:bandlimited}, but separability seems to be out of
reach, which causes problems for fast algorithmic realizations.

\section{Optimal Sparse Approximations}
\label{sec:optim-sparse-appr}

In this section, we will show that shearlets -- both band-limited as
well as compactly supported as defined in
Sect.~\ref{sec:pyram-adapt-shearl} -- indeed provide the optimal
sparse approximation rate for cartoon-like images from
Sect.~\ref{sec:optimal-sparsity}. Thus, letting
$(\psi_{\lambda})_\lambda=(\psi_{j,k,m})_{j,k,m}$ denote the
band-limited shearlet frame from Sect.~\ref{subsec:bandlimited} and the
compactly supported shearlet frame from Sect.~\ref{subsec:compactsupport} in
both 2D and 3D (see \cite{Intro}) and $d \in \{2, 3\}$, we aim to
prove that
\[
 \norm[L^2]{f-f_N}^2 \lesssim N^{-\frac{2}{d-1}} \quad \text{for all } f \in \cE^{2}_{L}(\R^d),
\]
where -- as debated in Sect.~\ref{sec:non-line-appr} -- $f_N$ denotes the
$N$-term approximation using the $N$ largest coefficients as
in~(\ref{eq:frame-n-term-largest}). Hence, in 2D we aim for the rate
$N^{-2}$ and in 3D we aim for the rate $N^{-1}$ with ignoring
$\log$-factors. As mentioned in Sect.~\ref{sec:optimal-sparsity},
see~(\ref{eq:sought-sparsity}), in order to prove these rate, it
suffices to show that the $n$th largest shearlet coefficient
$c^*_{\,n}$ decays as
\[
 \abs{c^*_{\, n}} \lesssim n^{-\frac{d+1}{2(d-1)}} = \begin{cases}
    n^{-3/2} & : \quad d=2, \\
   n^{-1} & : \quad d=3.
  \end{cases}
\]

According to Dfn.~\ref{def:optimal} this will show that among all
adaptive and non-adaptive representation systems shearlet frames
behave optimal with respect to sparse approximation of cartoon-like
images. That one is able to obtain such an optimal approximation error
rate might seem surprising, since the shearlet system as well as the
approximation procedure will be non-adaptive.

To present the necessary hypotheses, illustrate the key ideas of the
proofs, and debate the differences between the arguments for
band-limited and compactly supported shearlets, we first focus on the
situation of 2D shearlets. We then discuss the 3D situation, with a
sparsified proof, mainly discussing the essential differences to the
proof for 2D shearlets and highlighting the crucial nature of this
case (cf.~Sect.~\ref{subsec:3D}).

\subsection{Optimal Sparse Approximations in 2D}
\label{sec:optim-sparse-appr-2d}

As discussed in the previous section, in the case $d=2$, we aim for the
estimates $ \abs{c^*_{\, n}} \lesssim n^{-3/2}$ and
$\norm[L^2]{f-f_N}^2 \lesssim N^{-2}$ (up to log-factors). In
Sect.~\ref{sec:heuristic-analysis} we will first provide a heuristic
analysis to argue that shearlet frames indeed can deliver these rates. In
Sect.~\ref{sec:required-hypotheses} and \ref{sec:main-result} we then
discuss the required hypotheses and state the main optimality result.
The subsequent subsections are then devoted to proving the main result.

\subsubsection{A Heuristic Analysis}
\label{sec:heuristic-analysis}

We start by giving a heuristic argument (inspired by a similar
argument for curvelets in \cite{CD04}) on why the error $\norm[L^2]{f-f_N}^2$
satisfies the asymptotic rate $N^{-2}$. We emphasize that this heuristic argument
applies to both the band-limited and also the compactly supported
case.

For simplicity we assume $L=1$, and let $f \in \cE_L^2(\R^2)$ be a 2D
cartoon-like image. The main concern is to derive the
estimate~(\ref{eq:2d-sparsity-rate}) for the shearlet coefficients
$\innerprod{f}{\mathring{\psi}_{j,k,m}}$, where $\mathring{\psi}$
denotes either $\psi$ or $\tilde\psi$. We consider only the case
$\mathring{\psi} = \psi$, since the other case can be handled similarly. For compactly supported
shearlet, we can think of our generators having the form
$\psi(x)=\eta(x_1)\phi(x_2)$, $x=(x_1,x_2)$, where $\eta$ is a wavelet
and $\phi$ a bump (or a scaling) function. It will become important,
that the wavelet `points' in the $x_1$-axis direction, which
corresponds to the `short' direction of the shearlet. For band-limited
generators, we can think of our generators having the form
$\hat\psi(\xi)=\hat\eta(\xi_2/\xi_1)\hat\phi(\xi_2)$ for
$\xi=(\xi_1,\xi_2)$. 
We, moreover, restrict our analysis to shearlets $\psi_{j,k,m}$ since
the frame elements $\tilde{\psi}_{j,k,m}$ can be handled in a similar
way.

   We now consider three cases of coefficients
  $\innerprod{f}{\psi_{j,k,m}}$:
  \begin{enumerate}[(a)]
  \item Shearlets $\psi_{j,k,m}$ whose support does not overlap
    with the boundary $\partial B$.
  \item Shearlets $\psi_{j,k,m}$ whose support overlaps with $\partial B$
    and is nearly tangent.
  \item Shearlets $\psi_{j,k,m}$ whose support overlaps with
    $\partial B$, but not tangentially.
  \end{enumerate}

 \begin{figure}[ht]
\centering
\includegraphics{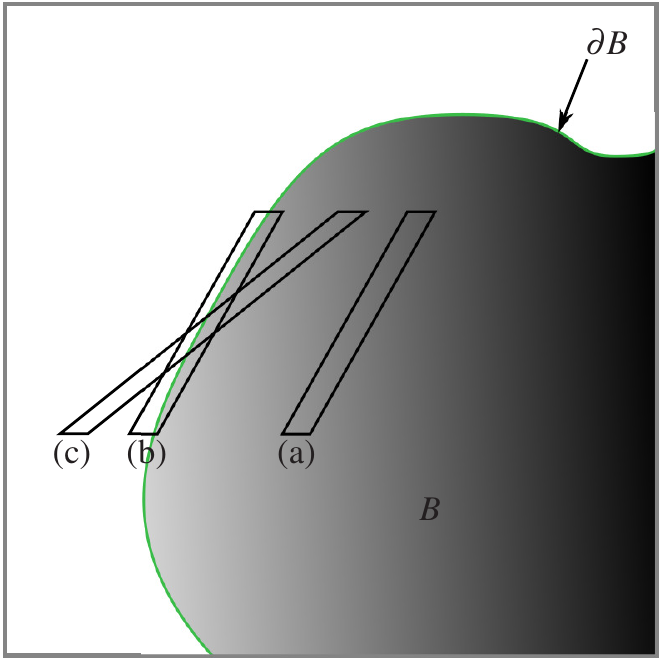}
\caption{Sketch of the three cases: (a) the support of $\psi_{j,k,m}$
  does not overlap with $\partial B$, (b) the support of
  $\psi_{j,k,m}$ does overlap with $\partial B$ and is nearly tangent,
  (c) the support of $\psi_{j,k,m}$ does overlap with $\partial B$,
  but not tangentially. Note that only a section of the discontinuity
  curve $\partial B$ is shown, and that for the case of band-limited
  shearlets only the effective support is shown.}
\label{fig:case-a-b-c}
\end{figure}

It turns out that only coefficients from case~(b) will be
significant. Case~(b) is, loosely speaking, the situation, where
the wavelet $\eta$ crosses the discontinuity curve over the
entire `height' of the shearlet, see Fig.~\ref{fig:case-a-b-c}. 

\emph{Case (a).} Since $f$ is $C^2$-smooth away from $\partial B$, the
coefficients $\abs{\innerprod{f}{\psi_{j,k,m}}}$ will be sufficiently
small owing to the approximation property of the wavelet $\eta$.  The situation
is sketched in Fig.~\ref{fig:case-a-b-c}.

\emph{Case (b).} At scale $j>0$, there are about $O(2^{j/2})$
coefficients, since the shearlet elements are of length $2^{-j/2}$
(and `thickness' $2^{-j}$) and the length of $\pa B$ is finite.
By H\"older's inequality, we immediately obtain
\[ \abs{\innerprod{f}{\psi_{j,k,m}}} \le \norm[L^\infty]{f}
  \norm[L^1]{\psi_{j,k,m}} \le C_1 \, 2^{-3j/4}
  \norm[L^1]{\psi} \le C_2 \cdot 2^{-3j/4} \]
for some constants $C_1, C_2>0$. In other words, we have $O(2^{j/2})$
coefficients bounded by $C_2 \cdot 2^{-3j/4}$. Assuming the case (a) and
(c) coefficients are negligible, the $n$th largest coefficient
$c^\ast_{\,n}$ is then bounded by
\[ \abs{c^\ast_{\,n}} \le C \cdot n^{-3/2},\] which was what we aimed
to show; compare to (\ref{eq:coeff-fastest-decay-upper}) in Dfn.
\ref{def:optimal}. This in turn implies (cf.~estimate~(\ref{eq:sparsity-implies-approx})) that
\[
    \sum_{n>N} \abs{c^\ast_{\,n}}^2 \leq \sum_{n>N}C \cdot n^{-3} \le
    C \cdot \int_N^\infty x^{-3} \D x \le C \cdot N^{-2}.
 \]
By Lemma~\ref{lemma:n-term-frame-approx}, as desired it follows that
  \[
  \norm[L^2]{f-f_N}^2 \leq \frac{1}{A}\sum_{n>N} \abs{c^\ast_{\,n}}^2 \le C \cdot N^{-2},
  \]
  where $A$ denotes the lower frame bound of the shearlet frame.

  \emph{Case (c).} Finally, when the shearlets are sheared away from
  the tangent position in case~(b), they will again be small. This is
  due to the frequency support of $f$ and $\psi_\lambda$ as well as to
  the directional vanishing moment conditions assumed in
  Setup~\ref{setup:assumptions-on-generators-BL-2D}
  or~\ref{setup:assumptions-on-generators-CS-2D}, which will be formally introduced in the next
  subsection.

  Summarising our findings, we have argued, at least heuristically, that shearlet
  frames provide optimal sparse approximation of cartoon-like images
  as defined in Dfn.~\ref{def:optimal}.


\subsubsection{Required Hypotheses}
\label{sec:required-hypotheses}

After having build up some intuition on why the optimal sparse
approximation rate is achievable using shearlets, we will now go into
more details and discuss the hypotheses required for the main result.
This will along the way already highlight some differences between the
band-limited and compactly supported case.


\begin{figure}[ht]
\centering
 \includegraphics[width=0.95\textwidth]{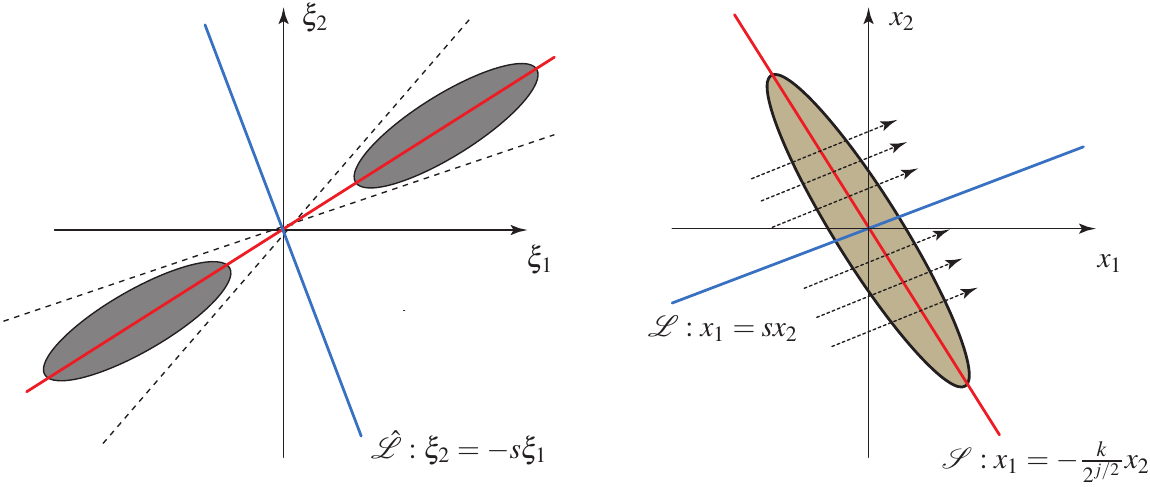}
\begin{minipage}[t]{0.4\linewidth}
   \caption{Shaded region: The effective part of $\supp \hat
      \psi_{j,k,m}$ in the frequency domain.}
    \label{fig:directional-van-mom-freq}
\end{minipage}
\hspace{0.1\textwidth} 
\begin{minipage}[t]{0.4\linewidth}
  \caption{Shaded region: The effective part of $\supp \psi_{j,k,m}$
    in the spatial domain. Dashed lines: the direction of line
    integration $I(t)$.}
  \label{fig:directional-van-mom-spatial}
\end{minipage}
\end{figure}

For this discussion, assume that $f \in L^2(\R^2)$ is piecewise
$C^{L+1}$-smooth with a discontinuity on the line $\cL : x_1 = sx_2$,
$s\in \R$, so that the function $f$ is well approximated by two 2D
  polynomials of degree $L>0$, one polynomial on either side of $\cL$, and denote
  this piecewise polynomial $q(x_1,x_2)$. We denote the restriction of
  $q$ to lines $x_1=sx_2+t$, $t\in \R$, by $p_t(x_2)=q(sx_2+t,x_2)$.
  Hence, $p_t$ is a 1D polynomial along lines parallel to $\cL$ going
  through $(x_1,x_2)=(t,0)$; these lines are marked by dashed lines in
  Fig.~\ref{fig:directional-van-mom-spatial}.

We now aim at estimating the absolute value of a
shearlet coefficient $\innerprod{f}{\psi_{j,k,m}}$ by
\begin{equation} \label{eq:boundingcoeffs} \abs{\innerprod{f}{\psi_{j,k,m}}} \leq
\abs{\innerprod{q}{\psi_{j,k,m}}} + \abs{\innerprod{(q-f)}{\psi_{j,k,m}}}.
\end{equation} We first observe that $\abs{\innerprod{f}{\psi_{j,k,m}}}$ will be
small depending on the approximation quality of the (piecewise) polynomial $q$ and
the decay of $\psi$ in the spatial domain. Hence it suffices to focus
on estimating $\abs{\innerprod{q}{\psi_{j,k,m}}}$.

For this, let us consider the line integration along the direction
$(x_1,x_2)=(s,1)$ as follows: For $t \in \R$ fixed, define integration
of $q \psi_{j,k,m}$ along the lines $x_1=sx_2+t$, $x_2 \in \R$, as
\[
I(t) = \int_{\R} p_t(x_2)\psi_{j,k,m}(sx_2+t,x_2)\D x_2,
\]
Observe that $\abs{\innerprod{q}{\psi_{j,k,m}}} = 0$ is equivalent to $I \equiv 0$. For simplicity, let us now assume $m = (0,0)$. Then
\begin{align*}
  I(t) &= 2^{\frac{3}{4}j}\int_{\R} p_t(x_2) \psi(S_k A_{2^j}(sx_2+t,x_2))\D x_2 \\
  &= 2^{\frac{3}{4}j} \sum_{\ell =0}^{L} c_{\ell} \int_{\R}x_2^{\ell} \psi(S_kA_{2^j}(sx_2+t,x_2))\D x_2\\
  &= 2^{\frac{3}{4}j}\sum_{\ell
    =0}^{L}c_{\ell}\int_{\R}x_2^{\ell}\psi(A_{2^j}S_{k/2^{j/2}+s}(t,x_2))\D x_2,
\end{align*}
and, by the Fourier slice theorem~\cite{Kak88} (see also \eqref{eq:fourier-slice-thm}), it follows that
\[
|I(t)| = 2^{\frac{3}{4}j}\Bigl|\sum_{\ell=0}^{L}\frac{2^{-\frac{\ell}{2}j}}{(2\pi)^{\ell}}c_{\ell}\int_{\R}\Bigl(\frac{\partial}{\partial \xi_2}\Bigr)^{\ell}
\hat \psi(A^{-1}_{2^j}S^{-T}_{k/2^{j/2}+s}(\xi_1,0))\E^{2\pi i \xi_1 t}\D\xi_1\Bigr|.
\]
Note that
\[
\int_{\R}\Bigl(\frac{\partial}{\partial \xi_2}\Bigr)^{\ell}
\hat \psi(A^{-1}_{2^j}S^{-T}_{k/2^{j/2}+s}(\xi_1,0))\E^{2\pi i \xi_1 t}\D\xi_1 = 0 \quad \text{for almost all}\,\, t \in \R
\]
if and only if
\[
\Bigl(\frac{\partial}{\partial \xi_2}\Bigr)^{\ell}
\hat \psi(A^{-1}_{2^j}S^{-T}_{k/2^{j/2}+s}(\xi_1,0)) = 0 \quad \text{for almost all} \,\, \xi_1 \in \R.
\]
Therefore, to ensure $I(t) = 0$ for any 1D polynomial $p_t$ of degree $L>0$, we require the following condition:
\[
\Bigl(\frac{\partial}{\partial \xi_2}\Bigr)^{\ell}
\hat \psi_{j,k,0}(\xi_1,-s\xi_1) = 0 \quad \text{for almost all }
\xi_1 \in \R  \text{ and }  \ell = 0,\dots, L.
\]
These are the so-called \emph{directional vanishing moments} (cf.~\cite{DV05}) in the direction $(s,1)$.
We now consider the two cases, band-limited shearlets and compactly supported shearlets, separately.

If $\psi$ is a band-limited shearlet generator, we automatically have
\begin{equation}\label{eq:band}
\Bigl( \frac{\partial}{\partial \xi_2}\Bigr)^{\ell} \hat \psi_{j,k,m}(\xi_1,-s\xi_1) = 0 \quad \text{for}\,\,
\ell = 0,\dots,L \quad \text{if}\,\, |s+\frac{k}{2^{j/2}}|\ge 2^{-j/2},
\end{equation}
since $\supp \hat \psi \subset \cD$, where $\cD = \{\xi \in \R^2 :
|\xi_2/\xi_1| \leq 1\}$ as discussed in Chapter~\cite{Intro}. Observe
that the `direction' of $\supp \psi_{j,k,m}$ is determined by the line
$\cS: x_1 = -\frac{k}{2^{j/2}}x_2$. Hence, equation \eqref{eq:band}
implies that, if the direction of $\supp \psi_{j,k,m}$, i.e., of
$\cS$ is {\em not} close to the direction of $\cL$ in the sense that
$|s+\frac{k}{2^{j/2}}|\ge 2^{-j/2}$, then
\[
|\innerprod{q}{\psi_{j,k,m}}| = 0.
\]

However, if $\psi$ is a compactly supported shearlet generator,
equation \eqref{eq:band} can never hold, since it requires that $\supp
\hat \psi \subset \cD$. Therefore, for compactly supported generators,
we will assume that $(\frac{\partial}{\partial \xi_2})^l\hat\psi$,
$l=0,1$, has sufficient decay in $\cD^{c}$ to force $I(t)$ and hence
$\abssmall{\innerprod{q}{\psi_{j,k,m}}}$ to be sufficiently small. It should be
emphasized that the drawback that $I(t)$ will only be `small' for
compactly supported shearlets (due to the lack of exact directional
vanishing moments) will be compensated by the perfect localization
property which still enables optimal sparsity.


Thus, the developed conditions ensure that both terms on the right hand side of \eqref{eq:boundingcoeffs}
can be effectively bounded.

\bigskip

This discussion gives naturally rise to the following hypotheses for
optimal sparse approximation. Let us start with the hypotheses for the
band-limited case.

\begin{setup}\label{setup:assumptions-on-generators-BL-2D}
  The generators $\phi, \psi, \tilde{\psi} \in L^2(\R^2)$ are
  band-limited and $C^\infty$ in the frequency domain.
  Furthermore, the shearlet system $SH(\phi,\psi,\tilde{\psi};c)$ forms
  a frame for $L^2(\R^2)$
  (cf. the construction in Chapter~\cite{Intro} or
  Sect.~\ref{subsec:bandlimited}).
\end{setup}

In contrast to this, the conditions for the compactly supported
shearlets are as follows:

\begin{setup}\label{setup:assumptions-on-generators-CS-2D}
  The generators $\phi, \psi, \tilde{\psi} \in
  L^2(\R^2)$ are compactly supported, and the shearlet system
  $SH(\phi,\psi,\tilde{\psi};c)$ forms a frame for
  $L^2(\R^2)$. Furthermore, for all $\xi =
  (\xi_1,\xi_2) \in \R^2$, the function $\psi$ satisfies
      \begin{enumerate}[(i)]
      \item $|\hat\psi(\xi)| \le C \cdot \min\{1,|\xi_1|^{\delta}\}
        \cdot
        \min\{1,|\xi_1|^{-\gamma}\} \cdot
        \min\{1,|\xi_2|^{-\gamma}\}$, and
      \item $\left|\frac{\partial}{\partial \xi_2}\hat
          \psi(\xi)\right| \le |h(\xi_1)|
        \left(1+\frac{|\xi_2|}{|\xi_1|}\right)^{-\gamma},$
      \end{enumerate}
      where $\delta > 6$, $\gamma \ge 3$, $h \in
      L^1(\R)$, and $C$ a constant, and $\tilde{\psi}$
      satisfies analogous conditions with the obvious
      change of coordinates (cf. the construction in
Sect.~\ref{subsec:compactsupport}).
  \end{setup}

Conditions~(i) and (ii) in
Setup~\ref{setup:assumptions-on-generators-CS-2D} are exactly the
decay assumptions on $(\frac{\partial}{\partial
  \xi_2})^l \hat\psi$, $l=0,1$, discussed above that guarantees control
of the size of $I(t)$.

\subsubsection{Main Result}
\label{sec:main-result}

We are now ready to present the main result, which states that under
Setup~\ref{setup:assumptions-on-generators-BL-2D}
or Setup~\ref{setup:assumptions-on-generators-CS-2D} shearlets provide
optimally sparse approximations for cartoon-like images.

\begin{theorem}[\cite{GL07,KL10}]
\label{thm:opt-sparse-2D}
Assume Setup~\ref{setup:assumptions-on-generators-BL-2D}
or~\ref{setup:assumptions-on-generators-CS-2D}. Let $L\in \N$. For any
$\cp > 0$ and $\mu >0$, the shearlet frame
$SH(\phi,\psi,\tilde{\psi};c)$ provides optimally sparse
approximations of functions $f \in \cE_L^2(\R^2)$ in the sense of Dfn. \ref{def:optimal}, i.e.,
\begin{align}
  \norm[L^2]{f-f_N}^2 &= O(N^{-2}(\log{N})^3), \qquad &\text{as $N \to \infty$,}
\label{eq:2d-approx-rate}
\intertext{and}
   \abs{c^\ast_{\,n}} &\lesssim n^{-3/2} (\log
   n)^{3/2}, \qquad &\text{as $n \to \infty$,}\label{eq:2d-sparsity-rate}
\end{align}
 where $c=\setprop{\innerprod{f}{\mathring{\psi}_\lambda}}{\lambda \in
     \varLambda, \mathring{\psi}=\psi \text{ or } \mathring{\psi}=
     \tilde\psi}$ and $c^\ast=(c^\ast_n)_{n\in \N}$ is a decreasing
   (in modulus) rearrangement of $c$.
\end{theorem}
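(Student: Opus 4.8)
The plan is to reduce the global approximation problem to a local analysis of shearlet coefficients near the discontinuity curve $\pa B$, and then to establish a weak-type estimate on the coefficient sequence. Writing $f=f_0+f_1\chi_B$, I would first dispatch the smooth part: since $f_0\in C^2$ with compact support, the decay hypotheses on $\psi$ (the directional vanishing moments in the band-limited case, and conditions (i)--(ii) of Setup~\ref{setup:assumptions-on-generators-CS-2D} in the compactly supported case) force $\abs{\innerprod{f_0}{\psi_{j,k,m}}}$ to decay faster than any polynomial in $2^{-j}$, so these coefficients are negligible. The substantive task is therefore to estimate $\abs{\innerprod{f_1\chi_B}{\psi_{j,k,m}}}$, that is, to understand how shearlets interact with the characteristic function of a region with $C^2$ boundary.

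Second, I would localize along $\pa B$. At scale $j$ the curve is covered by $O(2^{j/2})$ windows of sidelength $2^{-j/2}$, and within each window the bounded-curvature assumption together with the Taylor expansion $E(s)=E(s')+E'(s')s+E''(t)s^2$ lets me replace $\pa B$ by its tangent line up to a controlled error. This brings the local problem into exactly the straight-edge setting analyzed in Sect.~\ref{sec:required-hypotheses}, where the coefficient $\innerprod{q}{\psi_{j,k,m}}$ of a piecewise-polynomial model was bounded through the line integral $I(t)$.

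Third, I would carry out the three-case estimate from the heuristic analysis. For shearlets not meeting $\pa B$ (Case~(a)) the smoothness of $f$ yields rapid decay. For near-tangent shearlets (Case~(b)) I use H\"older's inequality, $\abs{\innerprod{f}{\psi_{j,k,m}}}\le\norm[L^\infty]{f}\,\norm[L^1]{\psi_{j,k,m}}\lesssim 2^{-3j/4}$, and note there are only $O(2^{j/2})$ such coefficients per scale. For transverse shearlets (Case~(c)) the two constructions diverge: in the band-limited case the frequency support $\supp\hat\psi\subset\cD$ forces $\innerprod{q}{\psi_{j,k,m}}=0$ once $\abs{s+k2^{-j/2}}\ge 2^{-j/2}$, whereas for compactly supported generators I must instead invoke the decay of $(\tfrac{\pa}{\pa\xi_2})^\ell\hat\psi$ in $\cD^{c}$ to show that the coefficients are summably small in the shear index $k$. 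Assembling the per-scale counts and sizes yields the weak-type bound $\#\setprop{(j,k,m)}{\abs{\innerprod{f}{\psi_{j,k,m}}}>\eps}\lesssim \eps^{-2/3}$ up to log-factors, which is equivalent to $\abs{c^\ast_{\,n}}\lesssim n^{-3/2}(\log n)^{3/2}$; the approximation bound $\norm[L^2]{f-f_N}^2\lesssim N^{-2}(\log N)^3$ then follows from Lemma~\ref{lemma:n-term-frame-approx} and the summation in~\eqref{eq:sparsity-implies-approx}.

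The hard part will be the compactly supported Case~(c). In the band-limited case the transverse coefficients vanish exactly, so only the $O(2^{j/2})$ tangential coefficients of size $2^{-3j/4}$ survive and the counting is clean. For compactly supported shearlets there are no exact directional vanishing moments, so every shear index $k$ contributes a small but nonzero coefficient; the delicate point is to show, using the quantitative rates $\delta>6$ and $\gamma\ge 3$, that summing these contributions over all $O(2^{j/2})$ admissible shears and over the translations $m$ (including those straddling window boundaries and absorbing the curvature error) does not overwhelm the tangential budget. Controlling this accumulation is precisely what produces the logarithmic losses in \eqref{eq:2d-approx-rate}--\eqref{eq:2d-sparsity-rate}, and managing it uniformly in the number of pieces $L$ and the curvature bound $\cp$ is the technical crux of the argument.
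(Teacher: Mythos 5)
Your outline captures the core of the paper's argument for \emph{compactly supported} generators: split off the smooth part, linearize $\pa B$ locally to a line, use the decay of $\hat\psi$ in the shear direction to bound individual coefficients (this is exactly the role of Thm.~\ref{thm:decay-hyperplane}), count the translates $m$ interacting with the line per $(j,k)$, and sum to get $\card{\varLambda(\eps)}\lesssim \eps^{-2/3}\log_2(\eps^{-1})$, which Lemma~\ref{lem:suff-Lambda-eps-cond} converts into \eqref{eq:2d-approx-rate} and \eqref{eq:2d-sparsity-rate}. Two points, however, are genuine gaps. First, your claim that $\abs{\innerprod{f_0}{\psi_{j,k,m}}}$ decays ``faster than any polynomial in $2^{-j}$'' is false: $f_0$ is only $C^2$ and the generators have only finitely many (directional) vanishing moments, so the smooth part contributes coefficients with a fixed finite-order decay. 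It is not negligible; the paper handles it by a frame-property argument plus a counting of essential indices, which happens to yield the same $N^{-2}$ rate. Second, and more seriously, your three-case trichotomy based on whether $\supp\psi_{j,k,m}$ meets $\pa B$ simply does not apply to band-limited shearlets, whose supports are all of $\R^2$; every element ``meets'' the discontinuity. The paper explicitly abandons individual-coefficient estimates in that case and instead localizes $f$ by a smooth partition of unity $f_Q=fw_Q$ over dyadic squares $Q\in\cQ_j$, proves weak-$\ell^{2/3}$ bounds on the \emph{whole sequence} $(\innerprod{f_Q}{\psi_\lambda})_{\lambda\in\varLambda_j}$ separately for edge squares ($\lesssim 2^{-3j/4}$) and smooth squares ($\lesssim 2^{-3j/2}$), and assembles them with the $p$-triangle inequality for $p=2/3$ together with the counts $\cardsmall{\cQ_j^0}\lesssim 2^{j/2}$ and $\cardsmall{\cQ_j^1}\lesssim 2^j$. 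Your proposal has no substitute for this localization, and the exact directional vanishing moments you invoke only kill $\innerprod{q}{\psi_{j,k,m}}$ for the polynomial model $q$, not the residual $f-q$ or the spatial tails of the band-limited elements.

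A smaller omission: the reduction of $\pa B$ to its tangent line ``up to a controlled error'' is precisely the truncated estimate \eqref{eq:truncated-estimate-2d-CS}, which requires choosing the line $\cL$ so that $\are{M_t}\lesssim$ a scale-dependent quantity compatible with \eqref{eq:2d-sparsity-rate}; this is stated but not argued in your proposal, and it is where the bounded-curvature hypothesis actually enters. Likewise the case $L\neq 1$ needs a separate corner-point count ($O(2^{j/2})$ shearlets per corner per scale) rather than uniformity of the main estimate in $L$.
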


\subsubsection{Band-Limitedness versus Compactly Supportedness}
\label{sec:band-limited-versus-comp}

Before we delve into the proof of Thm.~\ref{thm:opt-sparse-2D}, we
first carefully discuss the main differences between band-limited
shearlets and compactly supported shearlets which requires adaptions
of the proof.

In the case of compactly supported shearlets, we can consider the two
cases $|\supp \mathring\psi_{\lambda} \cap \partial B| \neq 0$ and
$|\supp \mathring\psi_{\lambda} \cap \partial B| = 0$. In case the
support of the shearlet intersects the discontinuity curve $\partial
B$ of the cartoon-like image $f$, we will estimate each shearlet
coefficient $\innerprod{f}{\mathring{\psi}_{\lambda}}$ individually
using the decay assumptions on $\hat \psi$ in
Setup~\ref{setup:assumptions-on-generators-CS-2D}, and then apply a
simple counting estimate to obtain the sought
estimates~(\ref{eq:2d-approx-rate}) and~(\ref{eq:2d-sparsity-rate}).
In the other case, in which the shearlet does not interact with the
discontinuity, we are simply estimating the decay of shearlet
coefficients of a $C^2$ function. The argument here is similar to the
approximation of smooth functions using wavelet frames and rely on
estimating coefficients at all scales using the frame property.

In the case of band-limited shearlets, it is not allowed to consider
two cases $|\supp \psi_{\lambda} \cap \partial B| = 0$ and $|\supp
\psi_{\lambda} \cap \partial B| \neq 0$ separately, since all shearlet
elements $\psi_{\lambda}$ intersect the boundary of the set $B$.
In fact, one needs to first localize the cartoon-like image $f$ by
compactly supported smooth window functions associated with
dyadic squares using a partition of unity. Letting $f_Q$ denote such
a localized version, we then  estimate
$\innerprod{f_Q}{\psi_{\lambda}}$ instead of directly estimating the
shearlet coefficients $\innerprod{f}{\psi_{\lambda}}$.
Moreover, in the case of band-limited shearlets, one needs
to estimate the sparsity of the sequence of the shearlet coefficients rather than
analyzing the decay of individual coefficients.

In the next subsections we present the proof -- first for band-limited, then for compactly
supported shearlets -- in the case $L=1$, i.e., when the discontinuity
curve in the model of cartoon-like images is smooth. Finally, the
extension to $L \neq 1$ will be discussed for both cases
simultaneously.

We will first, however, introduce some notation used in the proofs and
prove a helpful lemma which will be used in both cases: band-limited and
compactly supported shearlets.  For a fixed $j$, we let $\mathcal{Q}_j$
be a collection of dyadic squares defined by
\[
\mathcal{Q}_j = \{Q = [\tfrac{l_1}{2^{j/2}}, \tfrac{l_1+1}{2^{j/2}}] \times [\tfrac{l_2}{2^{j/2}}, \tfrac{l_2+1}{2^{j/2}}] : l_1,l_2 \in \Z\}.
\]
We let $\varLambda$ denote the set of all
indices $(j,k,m)$ in the shearlet system and define
\[\varLambda_j = \{(j,k,m) \in \varLambda:
-\ceilsmall{2^{j/2}} \leq k \leq \ceilsmall{2^{j/2}}, m \in \Z^2\}.\]
For $\eps >0$, we define the set of `relevant' indices on scale $j$ as
\[
\varLambda_j(\eps) = \{ \lambda \in \varLambda_j :
|\innerprod{f}{\psi_{\lambda}}| > \eps\}
\]
and, on all scales, as
\[
\varLambda(\eps) = \{ \lambda \in \varLambda :
|\innerprod{f}{\psi_{\lambda}}| > \eps\}.
\]
\begin{lemma}\label{lem:suff-Lambda-eps-cond}
  Assume Setup~\ref{setup:assumptions-on-generators-BL-2D}
  or~\ref{setup:assumptions-on-generators-CS-2D}. Let $f\in
  \cE^2_L(\R^2)$. Then the following assertions hold:
\begin{enumerate}[(i)]
\item For some constant $C$, we have
\begin{equation}\label{eq:limit_j}
  \card{ \varLambda_j(\eps)} = 0 \quad \text{for} \quad j \ge
  \frac{4}{3}\log_2(\eps^{-1})+C
\end{equation}
\item If \begin{equation} \label{eq:sparse_mainestimate1}
  \card{ \varLambda_j(\eps)} \lesssim \eps^{-2/3},
\end{equation}
  for $j \ge 0$, then
\begin{equation} \label{eq:sparse_mainestimate2}
  \card{ \varLambda(\eps) } \lesssim \eps^{-2/3}\,\log_2(\eps^{-1}),
\end{equation}
which, in turn, implies  (\ref{eq:2d-approx-rate}) and
  (\ref{eq:2d-sparsity-rate}).
\end{enumerate}
\end{lemma}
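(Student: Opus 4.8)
The plan is to establish a single scale-dependent bound on $\abs{\innerprod{f}{\psi_{j,k,m}}}$ that is \emph{uniform} in the shear and translation indices, and then merely solve for the scale at which it falls below $\eps$. The point is that under either Setup~\ref{setup:assumptions-on-generators-BL-2D} or~\ref{setup:assumptions-on-generators-CS-2D} the generator $\psi$ lies in $L^1(\R^2)$ (being Schwartz in the band-limited case, compactly supported hence integrable in the other), while $f \in \cE^2_L(\R^2)$ is bounded, say $\norm[L^\infty]{f} \le C_f$. In the $2$-dimensional setting the $L^2$-normalized shearlets take the form $\psi_{j,k,m}(x) = \abs{\det A_{2^j}}^{1/2}\,\psi(S_k A_{2^j}x-m)$ with $\abs{\det A_{2^j}} = 2^{3j/2}$, so a change of variables gives $\norm[L^1]{\psi_{j,k,m}} = \abs{\det A_{2^j}}^{-1/2}\norm[L^1]{\psi} = 2^{-3j/4}\norm[L^1]{\psi}$. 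Hölder's inequality then yields
\[
\abs{\innerprod{f}{\psi_{j,k,m}}} \le \norm[L^\infty]{f}\,\norm[L^1]{\psi_{j,k,m}} \le C_1\,2^{-3j/4}
\]
uniformly in $k$ and $m$. Since $C_1\,2^{-3j/4}\le \eps$ holds exactly when $j \ge \tfrac{4}{3}\log_2(\eps^{-1}) + \tfrac{4}{3}\log_2 C_1$, no coefficient at such a scale can exceed $\eps$, which is precisely~\eqref{eq:limit_j} with $C=\tfrac{4}{3}\log_2 C_1$.

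\textbf{Part (ii), summing over scales.} Granting the per-scale estimate~\eqref{eq:sparse_mainestimate1}, I would write $\card{\varLambda(\eps)} = \sum_{j\ge 0}\card{\varLambda_j(\eps)}$ and use part~(i) to truncate: every term with $j > \tfrac{4}{3}\log_2(\eps^{-1}) + C$ vanishes, so only $O(\log_2(\eps^{-1}))$ scales contribute. Bounding each surviving term by~\eqref{eq:sparse_mainestimate1} gives
\[
\card{\varLambda(\eps)} = \sum_{0\le j \lesssim \log_2(\eps^{-1})} \card{\varLambda_j(\eps)} \lesssim \log_2(\eps^{-1})\cdot \eps^{-2/3},
\]
which is~\eqref{eq:sparse_mainestimate2}.

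\textbf{Part (ii), from counting to the rates.} It remains to convert the distribution bound~\eqref{eq:sparse_mainestimate2} into the rearrangement estimate~\eqref{eq:2d-sparsity-rate} and then into~\eqref{eq:2d-approx-rate}. Writing $a_n = \abs{c^\ast_n}$, note that for any $\eps < a_n$ the coefficients $c^\ast_1,\dots,c^\ast_n$ all exceed $\eps$, so $\card{\varLambda(\eps)}\ge n$; letting $\eps \uparrow a_n$ in~\eqref{eq:sparse_mainestimate2} gives $n \le C_2\, a_n^{-2/3}\log_2(a_n^{-1})$, i.e. $a_n^{2/3} \le C_2\, n^{-1}\log_2(a_n^{-1})$. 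To invert this while controlling the logarithm I would split into two cases. If $a_n \le n^{-3/2}$ then~\eqref{eq:2d-sparsity-rate} holds trivially. Otherwise $a_n > n^{-3/2}$ forces $\log_2(a_n^{-1}) < \tfrac{3}{2}\log_2 n$, and substituting this into the last inequality gives $a_n^{2/3} \lesssim n^{-1}\log_2 n$, hence $a_n \lesssim n^{-3/2}(\log n)^{3/2}$, establishing~\eqref{eq:2d-sparsity-rate}. Feeding this into Lemma~\ref{lemma:n-term-frame-approx} and comparing the tail with an integral,
\[
\norm[L^2]{f-f_N}^2 \le \frac1A\sum_{n>N} a_n^2 \lesssim \sum_{n>N} n^{-3}(\log n)^3 \lesssim \int_N^\infty x^{-3}(\log x)^3\,\D x \asymp N^{-2}(\log N)^3,
\]
which is~\eqref{eq:2d-approx-rate}.

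The genuinely substantive estimate is the per-scale count~\eqref{eq:sparse_mainestimate1}, and it is deliberately the \emph{hypothesis} of part~(ii): it is where the geometry of $\partial B$ and the directional decay of $\hat\psi$ from the two Setups will enter, and I expect it to be the real obstacle, to be discharged in the subsequent subsections. Within this lemma itself the only delicate point is the logarithmic bookkeeping in the inversion step, which the case split above handles cleanly and without any bootstrapping.
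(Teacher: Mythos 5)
Your proof is correct and follows essentially the same route as the paper's: part (i) is the identical H\"older/$L^1$-rescaling bound $\abs{\innerprod{f}{\psi_{j,k,m}}}\le 2^{-3j/4}\norm[L^\infty]{f}\norm[L^1]{\psi}$ solved for $j$, and part (ii) is the same truncated sum over the $O(\log_2(\eps^{-1}))$ surviving scales followed by inversion of the counting bound to get the rearrangement and approximation rates. Your explicit case-split for inverting $n\lesssim a_n^{-2/3}\log_2(a_n^{-1})$ is in fact more careful than the paper, which simply asserts that $\eps$ can be written as a function of $n$ satisfying $\eps(n)\lesssim n^{-3/2}(\log_2 n)^{3/2}$.
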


\begin{proof}
\emph{(i).}  Since $\psi \in L^1(\R^2)$ for both the band-limited and
compactly supported setup, we have that
  \begin{eqnarray}\label{eq:estimate_scale}
    |\innerprod{f}{\psi_{\lambda}}| &=& \Bigl| \int_{\R^2}
    f(x) 2^{\frac{3j}{4}}\psi(S_kA_{2^j}x-m)\D x\Bigr| \nonumber \\
    &\leq& 2^{\frac{3j}{4}}\norm[\infty]{f}\int_{\R^2}|\psi(S_kA_{2^j}x-m)|\D x \nonumber \\
    &=& 2^{-\frac{3j}{4}}\norm[\infty]{f}\norm[1]{\psi}.
  \end{eqnarray}
  As a consequence, there is a scale $j_{\eps}$ such that
  $|\innerprod{f}{\psi_{\lambda}}| < \eps$ for each $j \ge j_{\eps}$.
  It therefore follows from \eqref{eq:estimate_scale} that
  \begin{equation*} 
    \card{ \varLambda(\eps)} = 0 \quad \text{for} \quad j >
    \frac{4}{3}\log_2(\eps^{-1})+C.
  \end{equation*}

  \noindent \emph{(ii).} By assertion~(i) and
  estimate~\eqref{eq:sparse_mainestimate1}, we have that
  \begin{equation*} 
    \card{ \varLambda(\eps) } \leq C\; \eps^{-2/3}\,\log_2(\eps^{-1}).
  \end{equation*}
  From this, the value $\eps$ can be written as a function of the
  total number of coefficients $n=\card{ \varLambda(\eps)}$. We obtain
  \[
  \eps(n) \leq C\; n^{-3/2}(\log_2(n))^{3/2} \quad \text{for
    sufficiently large} \,\, n.
  \]
  This implies that
  \[
  \abs{c^\ast_{\,n}} \leq C\; n^{-3/2}(\log_2(n))^{3/2}
  \]
  and
  \[
  \sum_{n > N} \abs{c^\ast_{\,n}}^2 \leq C\; N^{-2}(\log_2(N))^3
  \qquad \text{for sufficiently large} \,\, N>0,
  \]
  where $c^\ast_{\,n}$ as usual denotes the $n$th largest shearlet
  coefficient in modulus. 
\end{proof}

\subsubsection{Proof for Band-Limited Shearlets for $L=1$}
\label{sec:proof-band-limited-2d}

Since we assume $L=1$, we have that $f\in \cE^2_L(\R^2)=\cE^2(\R^2)$.
As mentioned in the previous section, we will now measure the sparsity of
the shearlet coefficients
$\setprop{\innerprod{f}{\mathring{\psi}_\lambda}}{\lambda \in
  \varLambda}$. For this, we will use the weak $\ell^p$ quasi norm
$\norm[w\ell^p]{\cdot}$ defined as follows. For a sequence
$s=(s_i)_{i\in I}$, we let, as usual, $s^\ast_{\,n}$ be the $n$th
largest coefficient in $s$ in modulus. We then define:
\[
\norm[w\ell^p]{s} = \sup_{n>0} \, n^{\frac{1}{p}} \abs{s^\ast_{\,n}} .
\]
One can show \cite{stein_weiss} that this definition is equivalent to
\[
\norm[w\ell^p]{s} = \Bigl(\sup\setpropbig{ \card{\{i:|s_{i}|>\eps\}}\eps^p}{\eps > 0}\Bigr)^{\frac{1}{p}}.
\]

We will only consider the case $\mathring{\psi}=\psi$ since the case
$\mathring{\psi}=\tilde\psi$ can be handled similarly. To analyze the
decay properties of the shearlet coefficients
$(\innerprod{f}{\psi_{\lambda}})_\lambda$ at a given scale parameter $j \ge
0$, we smoothly localize the function $f$ near dyadic squares. Fix the
scale parameter $j \ge 0$. For a non-negative $C^{\infty}$ function
$w$ with support in $[0,1]^2$, we then define a smooth partition of
unity
\[
\sum_{Q \in \mathcal{Q}_j} w_{Q}(x) = 1, \qquad x \in \R^2,
\]
where, for each dyadic square $Q \in \mathcal{Q}_j$, $w_{Q}(x) = w(2^{j/2}x_1-l_1,2^{j/2}x_2-l_2)$.
We will then examine the shearlet coefficients of the localized function $f_{Q} := fw_{Q}$.
With this smooth localization of the function $f$, we can now consider the two separate cases,
$|\supp w_Q  \cap \partial B| = 0$ and $|\supp w_Q \cap \partial B| \neq 0$.
Let
\[
\mathcal{Q}_j = \mathcal{Q}^0_j \cup \mathcal{Q}_j^1,
\]
where the union is disjoint and
$\mathcal{Q}_j^0$ is the collection of those dyadic squares $Q \in \mathcal{Q}_j$ such that
the edge curve $\partial B$ intersects the support of $w_Q$. Since each $Q$ has side length $2^{-j/2}$ and
the edge curve $\partial B$ has finite length, it follows that
\begin{equation} \label{eq:Q0}
\cardsmall{ \mathcal{Q}_j^0} \lesssim 2^{j/2}.
\end{equation}
Similarly, since $f$ is compactly supported in
$[0,1]^2$, we see that
\begin{equation} \label{eq:Q1}
\cardsmall{ \mathcal{Q}^1_j} \lesssim 2^{j}.
\end{equation}
The following theorems analyzes the sparsity of the shearlets coefficients for each dyadic square $Q \in \mathcal{Q}_j$.
\begin{theorem}[\cite{GL07}]\label{thm:nonsmooth}
Let $f \in \cE^2(\R^2)$.  For $Q \in \mathcal{Q}_j^0$, with $j \ge 0$ fixed, the sequence of shearlet coefficients $\{d_\lambda:=\innerprod{f_Q}{\psi_{\lambda}}:\lambda \in \varLambda_j\}$ obeys
$$
\norm[w\ell^{2/3}]{(d_\lambda)_{\lambda\in\varLambda_j}} \lesssim  2^{-\frac{3j}{4}}.
$$
\end{theorem}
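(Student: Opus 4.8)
The plan is to use the localization already set up before the statement: for a fixed edge square $Q$, the profile $f_Q=fw_Q$ is supported on a box of side $2^{-j/2}$ through which $\partial B$ passes, so everything reduces to one clean decay bound for $\innerprod{f_Q}{\psi_{j,k,m}}$ followed by a counting argument. First I would \emph{linearize}: since $\partial B$ is $C^2$ with curvature bounded by $\cp$, over a square of side $2^{-j/2}$ the curve deviates from its tangent line by only $O(2^{-j})$, which is exactly the width of a scale-$j$ shearlet (this is the parabolic-scaling relation matching the shearlet width to the square of its length). I would therefore write $f_Q=q+(f_Q-q)$, where $q$ is the piecewise-affine model obtained by replacing the two $C^2$ pieces of $f$ by their first-order Taylor polynomials and the edge by its tangent, and split the coefficient as in~\eqref{eq:boundingcoeffs}.

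For the main term $\innerprod{q}{\psi_{j,k,m}}$ I would invoke the exact directional vanishing moments of band-limited generators. As recorded in~\eqref{eq:band}, because $\supp\hat\psi\subset\cD$ the coefficient $\innerprod{q}{\psi_{j,k,m}}$ vanishes identically unless the shear is aligned with the edge tangent, i.e.\ unless $\abssmall{s+k2^{-j/2}}<2^{-j/2}$; hence only $O(1)$ shear indices $k$ (those within one step of the aligned value $\bar k$) produce a nonzero contribution. For such $k$, the crude $L^1$ bound already used in~\eqref{eq:estimate_scale} gives $\abssmall{\innerprod{q}{\psi_{j,k,m}}}\lesssim 2^{-3j/4}$, and the Schwartz spatial decay of a band-limited $\psi$ (Setup~\ref{setup:assumptions-on-generators-BL-2D}) turns the wavelet acting in the short direction into a rapidly decaying, mean-zero probe of the jump, so that the coefficients decay faster than any polynomial in the transverse distance of the shearlet centre from the edge. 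Thus, along the aligned shears, only $O(1)$ translations $m$ carry coefficients of the top size $2^{-3j/4}$, the rest forming a rapidly decaying tail.

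For the error term $\innerprod{f_Q-q}{\psi_{j,k,m}}$ I would use that $f_Q-q$ consists of the $C^2$ Taylor remainders of the smooth parts, of size $O(2^{-j})$ on $Q$, together with the sliver between the true curve and its tangent, a set of width $O(2^{-j})$; combining this with the spatial decay of $\psi_{j,k,m}$ and its shear-controlled frequency localization yields rapid decay in both the shear offset $\abssmall{k-\bar k}$ and the transverse translation. Putting the two terms together produces the key estimate
\[
\abssmall{\innerprod{f_Q}{\psi_{j,k,m}}}\le C_N\,2^{-3j/4}\,(1+\abssmall{k-\bar k})^{-N}\,(1+\mathrm{dist}_{j,k}(m))^{-N},
\]
valid for every $N$, where $\mathrm{dist}_{j,k}(m)$ measures, in units of the shearlet width, how far the centre of $\psi_{j,k,m}$ lies from $\partial B$.

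Finally I would convert this into the weak-$\ell^{2/3}$ bound through the second (distributional) description of $\norm[w\ell^{2/3}]{\cdot}$ recorded above, i.e.\ by estimating $\cardsmall{\setprop{\lambda\in\varLambda_j}{\abssmall{\innerprod{f_Q}{\psi_\lambda}}>\eps}}$. Since there are $O(1)$ coefficients of size $\asymp 2^{-3j/4}$ and all others decay rapidly, the count is $O(1)$ at the threshold $\eps\asymp 2^{-3j/4}$ and grows only slowly as $\eps$ decreases; taking the decay order $N$ large (permissible for a band-limited generator) one checks that $\cardsmall{\setprop{\lambda\in\varLambda_j}{\abssmall{\innerprod{f_Q}{\psi_\lambda}}>\eps}}\,\eps^{2/3}\lesssim(2^{-3j/4})^{2/3}=2^{-j/2}$ uniformly in $\eps$, which is exactly $\norm[w\ell^{2/3}]{(d_\lambda)_{\lambda\in\varLambda_j}}\lesssim 2^{-3j/4}$. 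I expect the genuine obstacle to be the derivation of the displayed decay estimate for the \emph{curved} edge: one must show that the quadratic deviation of $\partial B$ from its tangent, though nonzero, is subordinate to the shearlet width and therefore does not spoil the rapid off-alignment decay that the exact vanishing moments furnish for the straight model. The counting step, by contrast, is routine.
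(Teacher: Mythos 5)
The first thing to note is that the paper does not actually prove this statement: Thm.~\ref{thm:nonsmooth} is quoted from \cite{GL07}, and the text only proves its consequence, Thm.~\ref{thm:both}. Nevertheless, your proposal has a genuine gap, and it sits precisely at the point you yourself flag as ``the genuine obstacle'' in your last paragraph. The key estimate you posit,
\[
\abssmall{\innerprod{f_Q}{\psi_{j,k,m}}}\le C_N\,2^{-3j/4}\,(1+\abssmall{k-\bar k})^{-N}\,(1+\mathrm{dist}_{j,k}(m))^{-N}\quad\text{for every }N,
\]
is false for a curved edge. Over a square of side $2^{-j/2}$ the $C^2$ edge deviates from its tangent by an amount of order $2^{-j}$ --- the \emph{same} order as the shearlet width, not smaller --- so the sliver $f_Q-q$ is not subordinate to the shearlet's oscillation. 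A shearlet at shear offset $r=\abssmall{k-\bar k}$ meets the $2^{-j}$-neighborhood of the tangent in a set of area $\asymp 2^{-3j/2}/r$, and even after exploiting vanishing moments one only obtains a fixed polynomial decay in $r$, of the type $2^{-3j/4}(1+r)^{-3}$ as in Thm.~\ref{thm:decay-hyperplane}(i); the obstruction is the $C^2$ regularity of the edge, not the (infinitely many) moments of the band-limited generator. Moreover, the number of translates $m$ whose support meets the edge inside $Q$ at shear offset $r$ grows like $r+1$, cf.~\eqref{eq:counting}. Consequently the distribution function is genuinely
\[
\cardsmall{\setpropsmall{\lambda\in\varLambda_j}{\abssmall{d_\lambda}>\eps}}\;\asymp\;\sum_{r\lesssim(2^{-3j/4}/\eps)^{1/3}}(r+1)\;\asymp\;2^{-j/2}\eps^{-2/3}:
\]
the off-tangent coefficients are not a rapidly decaying tail but the \emph{dominant} contribution, and they exactly saturate the weak-$\ell^{2/3}$ bound. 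Your picture of ``$O(1)$ coefficients of top size plus a negligible remainder'' misidentifies where the exponent $2/3$ comes from, and with a decay in $r$ even slightly worse than cubic the count would exceed $2^{-j/2}\eps^{-2/3}$ and the theorem would not follow.

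This is also why the paper remarks in Sect.~\ref{sec:band-limited-versus-comp} that for band-limited generators ``one needs to estimate the sparsity of the sequence of the shearlet coefficients rather than analyzing the decay of individual coefficients'': the proof in \cite{GL07}, following \cite{CD04}, bounds $\sum_m\abssmall{\innerprod{f_Q}{\psi_{j,k,m}}}^2$ for fixed $(j,k)$ via Plancherel and an edge-fragment estimate obtained from the Radon transform / Fourier slice theorem, which yields a specific \emph{finite} polynomial angular decay, and then combines this with the translate count to reach weak-$\ell^{2/3}$. A pointwise, coefficient-by-coefficient route is viable --- it is exactly what the paper carries out for compactly supported generators in Sect.~\ref{sec:proof-comp-supp-2d} --- but it requires the sharp cubic decay in $\abssmall{k+2^{j/2}s}$ together with the linear-in-$r$ translate count, not the super-polynomial off-alignment decay you assume. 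If you replace your key estimate by the correct one and redo the counting, you essentially recover the paper's compact-support argument; as written, the central estimate of your proof is not available.
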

\begin{theorem}[\cite{GL07}]\label{thm:smooth}
Let $f \in \cE^2(\R^2)$. For $Q \in \mathcal{Q}_j^1$, with $j \ge 0$ fixed, the sequence of shearlet coefficients $\{d_\lambda:=\innerprod{f_Q}{\psi_{\lambda}}:\lambda \in \varLambda_j\}$ obeys
$$
\norm[w\ell^{2/3}]{(d_\lambda)_{\lambda\in\varLambda_j}} \lesssim  2^{-\frac{3j}{2}}.
$$
\end{theorem}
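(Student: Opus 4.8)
The plan is to exploit that, for $Q \in \mathcal{Q}_j^1$, the localized function $f_Q = f w_Q$ is globally $C^2$-smooth (the window $w_Q$ is $C^\infty$ and its support avoids the discontinuity $\partial B$, the sole source of irregularity of $f$) and is supported in a box of sidelength $\sim 2^{-j/2}$. Thus the task reduces to the classical principle that smooth, well-localized functions have rapidly summable shearlet coefficients. Since $\norm[w\ell^{2/3}]{(d_\lambda)_{\lambda \in \varLambda_j}} \le \bigl(\sum_{\lambda \in \varLambda_j}\abs{d_\lambda}^{2/3}\bigr)^{3/2}$, it suffices to prove the strong bound $\sum_{\lambda \in \varLambda_j}\abs{\innerprod{f_Q}{\psi_\lambda}}^{2/3} \lesssim 2^{-j}$, which then yields the claimed weak-$\ell^{2/3}$ estimate after taking the $\tfrac32$-power.

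The main work is a pointwise bound on the individual coefficients. I would pass to the Fourier side via Plancherel, $\innerprod{f_Q}{\psi_\lambda} = \innerprod{\hat f_Q}{\hat\psi_\lambda}$, and use that under Setup~\ref{setup:assumptions-on-generators-BL-2D} the band-limited $\hat\psi_\lambda$ is supported in the sheared dyadic corona $\abs{\xi_1}\sim 2^j$, $\xi_2 \approx k\xi_1$, whose distance from the origin is $\sim (1+\abs{k})2^j$. After rescaling $f_Q(x) = g(2^{j/2}x - l)$ with $\norm[C^2]{g}\lesssim\mu$ uniformly in $j$, one has $\hat f_Q(\xi) = 2^{-j}\hat g(2^{-j/2}\xi)$ up to a modulation, and the $C^2$-smoothness gives $\abs{\hat g(\zeta)}\lesssim (1+\abs{\zeta})^{-2}$. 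Evaluating on $\supp\hat\psi_\lambda$ then produces both a power of $2^{-j}$ and the shear decay $(1+\abs{k})^{-2}$. Crucially, the naive magnitude bound $\norm[\infty]{\hat f_Q}\,\norm[1]{\hat\psi_\lambda}$ only reaches $\sim 2^{-5j/4}$ and is far too lossy; the extra gain must come from \emph{cancellation}, which I would extract by integrating by parts in frequency --- both in the radial variable (using that $\hat\psi$ vanishes to infinite order at the origin, i.e.\ the directional vanishing moments of $\psi$) to sharpen the power of $2^{-j}$, and in the variable dual to the translation $m$ (legitimate since $\hat\psi$ is $C^\infty$ of compact support) to obtain rapid decay in $m$ and thereby confine attention to the $\sim 2^{j/2}$ translates whose support actually meets $Q$, the remaining $\lambda$ being negligible. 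The target I am aiming for is a bound $\abs{\innerprod{f_Q}{\psi_\lambda}} \lesssim 2^{-\alpha j}(1+\abs{k})^{-2}$ with $\alpha$ reaching the threshold $\tfrac94$ (smoothness in fact delivers somewhat more).

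Granting such a bound, the summation is routine bookkeeping of the anisotropic geometry: there are $\sim 2^{j/2}$ admissible shears $k$ and, for each, $\sim 2^{j/2}$ relevant translates, so
\[
\sum_{\lambda \in \varLambda_j}\abs{\innerprod{f_Q}{\psi_\lambda}}^{2/3} \lesssim \sum_{\abs{k}\le\lceil 2^{j/2}\rceil} 2^{j/2}\bigl(2^{-9j/4}(1+\abs{k})^{-2}\bigr)^{2/3} \lesssim 2^{j/2}\cdot 2^{-3j/2}\sum_{k}(1+\abs{k})^{-4/3} \lesssim 2^{-j},
\]
where the $k$-sum converges since $4/3>1$. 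This is exactly the reduction required above.

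The hard part is the coefficient estimate, specifically squeezing out the correct power of $2^{-j}$ while simultaneously retaining the shear decay $(1+\abs{k})^{-2}$: the size gain comes from smoothness in the short (radial) direction via cancellation, whereas the shear decay comes from the angular spreading of the frequency support of $\hat\psi_\lambda$, and these two mechanisms must be balanced against each other in a single integration-by-parts argument rather than estimated separately. Everything downstream --- the reduction to a strong $\ell^{2/3}$ sum and the counting of shears and translates --- is then mechanical.
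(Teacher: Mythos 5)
The paper does not actually prove this theorem: it is imported from \cite{GL07} without proof, and the surrounding discussion (Sect.~\ref{sec:band-limited-versus-comp}) explicitly warns that for band-limited shearlets ``one needs to estimate the sparsity of the sequence of the shearlet coefficients rather than analyzing the decay of individual coefficients.'' Your outer scaffolding is fine: $f_Q$ is indeed globally $C^2$ and supported on a box of side $\sim 2^{-j/2}$, the inequality $\norm[w\ell^{2/3}]{d} \le \norm[\ell^{2/3}]{d}$ is correct, there are $\sim 2^{j/2}$ shears with $\sim 2^{j/2}$ spatially relevant translates each, and the far translates are controlled by the rapid spatial decay of band-limited shearlets. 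The problem is the central pointwise estimate $\abs{\innerprod{f_Q}{\psi_{j,k,m}}}\lesssim 2^{-9j/4}(1+\abs{k})^{-2}$, on which everything rests.

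Concretely, the factor $(1+\abs{k})^{-2}$ cannot be produced by the mechanism you describe. For every admissible shear $\abs{k}\le\ceilsmall{2^{j/2}}$ the support of $\hat\psi_{j,k,m}$ lies in $\abs{\xi_1}\sim 2^j$, $\abs{\xi_2}\lesssim(1+\abs{k})2^{j/2}\le C2^j$, so its distance from the origin is $\sim 2^j$ \emph{independently of $k$}; your assertion that this distance is $\sim(1+\abs{k})2^j$ is false. Hence evaluating the isotropic $C^2$ decay $\abs{\hat g(\zeta)}\lesssim(1+\abs{\zeta})^{-2}$ at $\zeta=2^{-j/2}\xi$, where $\abs{\zeta}\sim 2^{j/2}+\abs{k}$ with $\abs{k}\le 2^{j/2}$, yields only the single factor $2^{-j}$, uniformly in $k$; obtaining the \emph{product} $2^{-j}(1+\abs{k})^{-2}$ would require separate two-fold decay in $\zeta_1$ and in $\zeta_2$, i.e.\ control of mixed fourth derivatives of $g$, which mere $C^2$ regularity of $f$ does not supply. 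The loss is not cosmetic: dropping the $k$-decay, your own summation gives $\sum_{\abs{k}\le\ceilsmall{2^{j/2}}}2^{j/2}\bigl(2^{-9j/4}\bigr)^{2/3}\asymp 2^{-j/2}$, hence only $\norm[w\ell^{2/3}]{d}\lesssim 2^{-3j/4}$ --- the rate of Thm.~\ref{thm:nonsmooth} for edge squares, not the claimed $2^{-3j/2}$. Moreover the uniform magnitude $2^{-9j/4}$ is itself beyond what a per-coefficient argument delivers: the size bound gives $2^{-5j/4}$, and Cauchy--Schwarz against the $L^2$ mass of $\hat f_Q$ on the corona $\abs{\xi}\sim 2^j$ (which is $\lesssim 2^{-4j}\normsmall[L^2]{\Delta f_Q}^2\lesssim 2^{-3j}$) improves this only to roughly $2^{-3j/2}$. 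Anything sharper must track how that $L^2$ mass is \emph{distributed} over the $\sim 2^{j/2}$ angular sectors and the translates, which is intrinsically a Plancherel/Bessel statement about the whole family $(d_\lambda)_{\lambda\in\varLambda_j}$ rather than a bound on individual coefficients. That is the step your proposal is missing, and it is precisely the point at which the cited proof departs from the individual-coefficient strategy you adopt.
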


As a consequence of these two theorems, we have the following result.
\begin{theorem}[\cite{GL07}]\label{thm:both}
Suppose $f \in \cE^2(\R^2)$. Then, for $j \ge 0$, the sequence of the shearlet coefficients
$\{c_\lambda:=\innerprod{f}{\psi_{\lambda}}:\lambda \in \varLambda_j\}$ obeys
$$
\norm[w\ell^{2/3}]{(c_\lambda)_{\lambda\in\varLambda_j}} \lesssim 1.
$$
\end{theorem}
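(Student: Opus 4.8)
The plan is to assemble the scale-$j$ estimate from the two localized estimates in Thm.~\ref{thm:nonsmooth} and Thm.~\ref{thm:smooth} by means of the smooth partition of unity introduced above. Writing $f = \sum_{Q \in \mathcal{Q}_j} f_Q$ with $f_Q = f w_Q$, linearity of the inner product gives, for each $\lambda \in \varLambda_j$,
\[
c_\lambda = \innerprod{f}{\psi_\lambda} = \sum_{Q \in \mathcal{Q}_j} \innerprod{f_Q}{\psi_\lambda} = \sum_{Q \in \mathcal{Q}_j} d_\lambda^Q, \qquad d_\lambda^Q := \innerprod{f_Q}{\psi_\lambda}.
\]
First I would split $\mathcal{Q}_j = \mathcal{Q}_j^0 \cup \mathcal{Q}_j^1$, so that the edge squares are governed by Thm.~\ref{thm:nonsmooth} and the smooth squares by Thm.~\ref{thm:smooth}.

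The heart of the argument is a combination step for the quasi-norm $\norm[w\ell^{2/3}]{\cdot}$. The key geometric observation is \emph{locality}: at scale $j$ the effective spatial support of $\psi_\lambda$ has its long dimension of order $2^{-j/2}$, which is exactly the side length of the dyadic squares $Q \in \mathcal{Q}_j$; hence for each fixed $\lambda$ only boundedly many squares contribute a non-negligible $d_\lambda^Q$, i.e. each index $\lambda$ meets the effective support of at most $O(1)$ of the sequences $(d_\lambda^Q)_{\lambda \in \varLambda_j}$. Consequently there is a fixed $C>0$ with $\#\{\lambda : |c_\lambda| > \eps\} \le \sum_{Q} \#\{\lambda : |d_\lambda^Q| > \eps/C\}$, and since for $p = 2/3 \le 1$ the functional $s \mapsto \norm[w\ell^{p}]{s}^{p}$ is quasi-subadditive over such boundedly overlapping families, I would deduce
\[
\norm[w\ell^{2/3}]{(c_\lambda)_{\lambda \in \varLambda_j}}^{2/3} \lesssim \sum_{Q \in \mathcal{Q}_j} \norm[w\ell^{2/3}]{(d_\lambda^Q)_{\lambda \in \varLambda_j}}^{2/3}.
\]

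It then remains to insert the two localized estimates together with the cardinality bounds \eqref{eq:Q0} and \eqref{eq:Q1}. For the edge squares, Thm.~\ref{thm:nonsmooth} gives $\norm[w\ell^{2/3}]{(d_\lambda^Q)}^{2/3} \lesssim (2^{-3j/4})^{2/3} = 2^{-j/2}$, and since $\cardsmall{\mathcal{Q}_j^0} \lesssim 2^{j/2}$ their total contribution is $\lesssim 2^{j/2} \cdot 2^{-j/2} = 1$. For the smooth squares, Thm.~\ref{thm:smooth} gives $\norm[w\ell^{2/3}]{(d_\lambda^Q)}^{2/3} \lesssim (2^{-3j/2})^{2/3} = 2^{-j}$, and since $\cardsmall{\mathcal{Q}_j^1} \lesssim 2^{j}$ their contribution is $\lesssim 2^{j} \cdot 2^{-j} = 1$. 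Adding the two yields $\norm[w\ell^{2/3}]{(c_\lambda)_{\lambda\in\varLambda_j}}^{2/3} \lesssim 1$ with a bound independent of $j$, which is exactly the claim.

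I expect the main obstacle to be the rigorous justification of the locality/combination step for band-limited shearlets. Unlike the compactly supported case, $\psi_\lambda$ is not compactly supported in space, so its overlap with the squares $Q$ is only approximate and the far tails must be controlled. The resolution is to exploit that a band-limited $\psi$ is Schwartz in the spatial domain (being $C^\infty$ with compact frequency support), so $d_\lambda^Q$ decays faster than any polynomial in the distance between $Q$ and the centre of $\psi_\lambda$; summing these rapidly decaying tails over the polynomially many distant squares keeps the effective overlap bounded and legitimizes the quasi-subadditivity used above. This is precisely the point at which the band-limited argument diverges from the compactly supported one discussed in Sect.~\ref{sec:band-limited-versus-comp}.
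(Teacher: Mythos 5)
Your decomposition $f=\sum_Q f_Q$, the application of Thms.~\ref{thm:nonsmooth} and~\ref{thm:smooth} on $\mathcal{Q}_j^0$ and $\mathcal{Q}_j^1$ respectively, and the final bookkeeping with \eqref{eq:Q0} and \eqref{eq:Q1} are exactly the paper's argument, and your arithmetic is correct. The gap is in the combination step. You derive the quasi-subadditivity $\norm[w\ell^{2/3}]{(c_\lambda)_\lambda}^{2/3}\lesssim\sum_Q\norm[w\ell^{2/3}]{(d^Q_\lambda)_\lambda}^{2/3}$ from the counting inequality $\cardwo{\setpropsmall{\lambda}{\abs{c_\lambda}>\eps}}\le\sum_Q\cardwo{\setpropsmall{\lambda}{\abssmall{d^Q_\lambda}>\eps/C}}$ with a \emph{fixed} $C$, justified by a locality claim (each $\lambda$ effectively meets only $O(1)$ squares). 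As stated this does not close: if $\abs{c_\lambda}>\eps$ but no single $\abssmall{d^Q_\lambda}$ exceeds $\eps/C$, you can only conclude $\abs{c_\lambda}\le M\eps/C$ with $M\asymp 2^j$ squares, so $C$ would have to grow with $j$ unless you prove that the aggregate contribution of all but $O(1)$ squares is at most $\eps/2$ \emph{uniformly in} $\eps$. Since $\eps$ ranges over all positive values while the Schwartz tails of a band-limited $\psi_\lambda$ are controlled only in terms of $j$ and distance, your proposed repair (``summing the rapidly decaying tails keeps the effective overlap bounded'') is not a proof; it conflates smallness of the far-square terms with their absence.

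The good news is that no locality is needed at all. The paper's proof is a one-line appeal to the $p$-triangle inequality for weak $\ell^p$ spaces with $p\le 1$ (what is really used is $p<1$: the quasi-norm $\norm[w\ell^{p}]{\cdot}^p$ is subadditive up to a constant depending only on $p$ over \emph{arbitrary} countable families, cf.~\cite{stein_weiss}), which yields
\begin{equation*}
\normBig[w\ell^{2/3}]{\sum_{Q\in\mathcal{Q}_j} d^Q}^{2/3}\lesssim\sum_{Q\in\mathcal{Q}_j}\norm[w\ell^{2/3}]{d^Q}^{2/3}
\end{equation*}
with no assumption whatsoever on how the supports of the summands interact. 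Substituting this general fact for your locality argument makes the rest of your proof go through verbatim and shows that the band-limited versus compactly supported distinction you worry about is irrelevant for this particular lemma.
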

\begin{proof}
Using Thm.~\ref{thm:nonsmooth} and \ref{thm:smooth}, by the $p$-triangle inequality for weak
$\ell^p$ spaces, $p \leq 1$, we have
\begin{eqnarray*}
\norm[w\ell^{2/3}]{\innerprod{f}{\psi_{\lambda}}}^{2/3} &\leq&
\sum_{Q \in \mathcal{Q}_j}
\norm[w\ell^{2/3}]{\innerprod{f_Q}{\psi_{\lambda}}}^{2/3} \\
&=&
\sum_{Q \in \mathcal{Q}^0_j}\norm[w\ell^{2/3}]{\innerprod{f_Q}{\psi_{\lambda}}}^{2/3} +
\sum_{Q \in \mathcal{Q}^1_j}\norm[w\ell^{2/3}]{\innerprod{f_Q}{\psi_{\lambda}}}^{2/3}
\\
&\leq& C\:\: \card{ \mathcal{Q}^0_j} \: 2^{-j/2}+C\:\: \card{ \mathcal{Q}^1_j} \: 2^{-j}.
\end{eqnarray*}
Equations \eqref{eq:Q0} and \eqref{eq:Q1} complete the proof.
\end{proof}

We can now prove Thm.~\ref{thm:opt-sparse-2D} for the band-limited setup.

\begin{proof}[Thm.~\ref{thm:opt-sparse-2D} for
  Setup~\ref{setup:assumptions-on-generators-BL-2D}]
From Thm.~\ref{thm:both}, we have that
\begin{equation*} 
  \card{ \varLambda_j(\eps)} \leq C\eps^{-2/3},
\end{equation*}
for some constant $C>0$, which, by Lemma~\ref{lem:suff-Lambda-eps-cond}, completes the proof.
\end{proof}

\subsubsection{Proof for Compactly Supported Shearlets for $L=1$}
\label{sec:proof-comp-supp-2d}

To derive the sought estimates (\ref{eq:2d-approx-rate})
and~(\ref{eq:2d-sparsity-rate}) for dimension $d=2$, we will study two
separate cases: Those shearlet elements $\psi_{\lambda}$ which do not
interact with the discontinuity curve, and those elements which do.
\begin{description}
\item[{\em Case 1}.] \hspace*{-1em} The compact support of the shearlet
  $\psi_{\lambda}$ does not intersect the boundary of the set $B$,
  \ie $\abs{\supp \psi_{\lambda} \cap \partial B} = 0$.
\item[{\em Case 2}.] \hspace*{-1em} The compact support of the shearlet
  $\psi_{\lambda}$ does intersect the boundary of the set $B$, \ie
  $\abs{\supp \psi_{\lambda} \cap \partial B} \neq 0 $.
\end{description}

For \emph{Case 1} we will not be concerned with decay estimates of
single coefficients $\innerprod{f}{\psi_\lambda}$, but with the decay
of sums of coefficients over several scales and all shears and
translations. The frame property of the shearlet system, the
$C^2$-smoothness of $f$, and a crude counting argument of the cardinal
of the essential indices $\lambda$ will be enough to provide the
needed approximation rate. The proof of this is similar to estimates
of the decay of wavelet coefficients for $C^2$ smooth functions. In
fact, shearlet and wavelet frames gives the same approximation decay
rates in this case. Due to space limitation of this exposition,
we will not go into the details of this estimate, but rather focus on
the main part of the proof, \emph{Case 2}.

For \emph{Case 2} we need to estimate each coefficient
$\innerprod{f}{\psi_\lambda}$ individually and, in particular, how
$\abs{\innerprod{f}{\psi_\lambda}}$ decays with scale $j$ and shearing
$k$. Without loss of generality we can assume that $f = f_0 + \chi_B
f_1$ with $f_0=0$.  We let then $M$ denote the area of integration in
  $\innerprod{f}{\psi_\lambda}$, that is,
\[M = \supp \psi_\lambda \cap B.
\]
Further, let $\cL$ be an affine hyperplane (in other and simpler words,
a line in $\R^2$) that intersects $M$ and thereby divides $M$ into two
sets $M_t$ and $M_l$, see the sketch in
Fig.~\ref{fig:trunc-and-lin-part-w-M-H}. We thereby have that
\begin{equation} \label{eq:splitting}
\innerprod{f}{\psi_\lambda} = \innerprod{\chi_{M} f}{\psi_\lambda}
= \innerprod{\chi_{M_t} f}{\psi_\lambda} + \innerprod{\chi_{M_l} f}{\psi_\lambda} .
\end{equation}
The hyperplane will be chosen in such way that the area of $ M_t$ is sufficiently small. In particular, $\are{ M_t}$ should be
small enough so that the following estimate
\begin{equation}
 \abs{\innerprod{\chi_{M_t}f}{\psi_\lambda}} \le \norm[L^\infty]{f}
\norm[L^\infty]{\psi_\lambda} \are{ M_t} \le \mu \, 2^{3j/4}
\are{M_t}\label{eq:truncated-estimate-2d-CS}
\end{equation}
do not violate~(\ref{eq:2d-sparsity-rate}). If the hyperplane $\cL$ is
positioned as indicated in Fig.~\ref{fig:trunc-and-lin-part-w-M-H}, it
can indeed be shown by crudely estimating $\are{M_t}$ that
(\ref{eq:truncated-estimate-2d-CS}) does not violate
estimate~(\ref{eq:2d-sparsity-rate}). We call estimates of this form,
where we have restricted the integration to a small part $M_t$ of $M$,
\emph{truncated} estimates. Hence, in the following we assume that
  \eqref{eq:splitting} reduces to $\innerprod{f}{\psi_\lambda} =
  \innerprod{\chi_{M_l} f}{\psi_\lambda}$.

\begin{figure}[ht]
\centering
\includegraphics{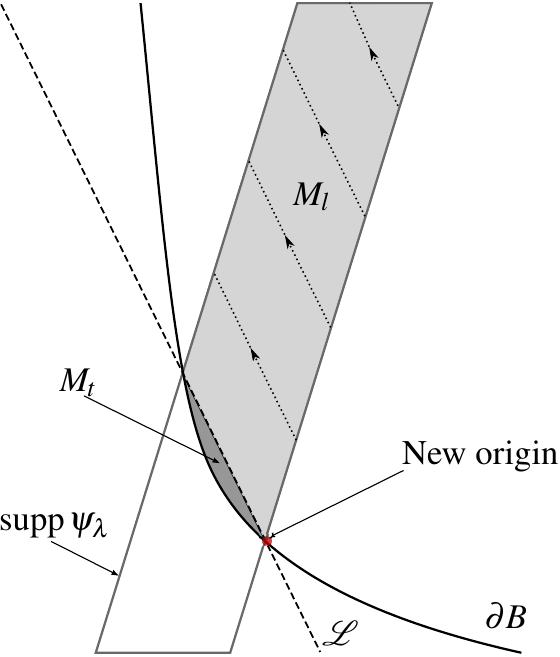}
\caption{Sketch of $\supp \psi_\lambda$, $M_l$, $M_t$, and $\cL$. The
  lines of integrations are shown.}
\label{fig:trunc-and-lin-part-w-M-H}
\end{figure}

For the term $\innerprod{\chi_{M_l} f}{\psi_\lambda}$ we will
have to integrate over a possibly much large part $M_l$ of $M$. To
handle this, we will use that $\psi_\lambda$ only interacts with the
discontinuity of $\chi_{M_l} f$ along a line inside $M$. This part of
the estimate is called the \emph{linearized} estimate, since the
discontinuity curve in $\innerprod{\chi_{M_l} f}{\psi_\lambda}$ has
been reduced to a line. In $\innerprod{\chi_{M_l} f}{\psi_\lambda}$ we
are, of course, integrating over two variables, and we will as the inner
integration always choose to integrate along lines parallel to the
`singularity' line $\cL$, see Fig.~\ref{fig:trunc-and-lin-part-w-M-H}.
The important point here is that along these lines, the
function $f$ is $C^2$-smooth without discontinuities on the entire
interval of integration. This is exactly the reason for removing the
$M_t$-part from $M$. Using the Fourier slice theorem we will then
turn the line integrations along $\cL$ in the spatial domain into line
integrations in the frequency domain. The argumentation is as follows:
Consider $g:\R^2 \to \C$ compactly supported and continuous, and let
$p: \R\to \C$ be a projection of $g$ onto, say, the $x_2$ axis, \ie
$p(x_1)=\int_\R g(x_1,x_2)d x_2$. This immediately implies that $\hat
p(\xi_1) = \hat g(\xi_1,0)$ which is a simplified version of the
Fourier slice theorem. By an inverse Fourier transform, we then have
\begin{equation}
\int_\R g(x_1,x_2)d x_2 = p(x_1) = \int_{\R} \hat
g(\xi_1,0) \expo{2\pi i x_1 \xi_1}
\mathrm{d}\xi_1, \label{eq:fourier-slice-thm}
\end{equation}
and hence
\begin{equation}
 \int_\R  \abs{g(x_1,x_2)}d x_2 = \int_{\R} \abs{\hat
g(\xi_1,0)} \mathrm{d}\xi_1. \label{eq:fourier-slice-thm-2}
\end{equation}
The left-hand side of (\ref{eq:fourier-slice-thm-2}) corresponds
  to line integrations of $g$ along vertical lines
  $x_1=\mathrm{constant}$. By applying shearing to the coordinates $x
  \in \R^2$, we can transform $\cL$ into a line of the form
  $\setprop{x \in \R^2}{x_1=\mathrm{constant}}$, whereby we can
  apply~(\ref{eq:fourier-slice-thm-2}) directly.

  We will make this idea more concrete in the proof of the following
  key estimate for linearized terms of the form
  $\innerprods{\chi_{M_l} f}{\psi_\lambda}$. Since we assume the
  truncated estimate as negligible, this will in fact allow us to
  estimate
  $\innerprod{f}{\psi_\lambda}$. 

\begin{theorem}\label{thm:decay-hyperplane}
  Let $\psi \in L^2(\R^2)$ be compactly supported, and assume that
  $\psi$ satisfies the conditions in
  Setup~\ref{setup:assumptions-on-generators-CS-2D}. Further, let
  $\lambda$ be such that $\supp \psi_\lambda \cap \pa B \neq
  \emptyset$. Suppose that $f \in \cE(\R^2)$
  and that $\pa B$ is linear on the support of $\psi_\lambda$ in the
  sense
\[ \supp \psi_\lambda \cap \pa B \subset \cL \]
for some affine hyperplane $ \cL$ of $\R^2$. Then,
\begin{enumerate}[(i)]
\item if $\cL$ has normal vector  $(-1,s)$ with $|s| \le 3$,
\begin{equation*}
\abs{\innerprod{f}{\psi_{\lambda}}} \lesssim  \frac{2^{-3j/4}}{\abs{k+2^{j/2}s}^{3}},
\end{equation*}
\item if $\cL$ has normal vector  $(-1,s)$ with $|s| \ge 3/2$,
 \begin{equation*}
\abs{\innerprod{f}{\psi_{\lambda}}} \lesssim  2^{-9j/4},
 \end{equation*}
\item if $\cL$ has normal vector  $(0,s)$ with $s\in \R$,
then 
\begin{equation*}
\abs{\innerprod{f}{\psi_{\lambda}}} \lesssim  2^{-11j/4}.
 \end{equation*}
\end{enumerate}
\end{theorem}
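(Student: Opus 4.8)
My plan is to establish all three bounds through the line-integration/Fourier-slice machinery set up in the paragraph preceding Setup~\ref{setup:assumptions-on-generators-CS-2D}, reducing each coefficient $\innerprod{f}{\psi_\lambda}$ to a one-dimensional frequency integral of $\hat\psi$ and $\partial_{\xi_2}\hat\psi$, to which hypotheses (i) and (ii) apply. First I would normalise: the estimates are invariant under the translation $m$ (which contributes only a unimodular factor after passing to Fourier), so I take $m=(0,0)$; and since the truncated part has already been shown harmless in \eqref{eq:truncated-estimate-2d-CS}, I may assume $f=\chi_B f_1$ with $\partial B$ coinciding with $\cL$ on $\supp\psi_\lambda$, so that along every line parallel to $\cL$ the function $f$ is $C^2$ with no jump. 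Restricting $f_1$ to such lines, I expand it into its first-order Taylor polynomial $p_t$ plus a remainder $r_t=O(\mathrm{dist}^2)$, reducing $\innerprod{f}{\psi_\lambda}$ to the polynomial contribution plus a remainder contribution handled at the end.

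For the cases with normal $(-1,s)$ I integrate first along the $\cL$-direction $(s,1)$, obtaining exactly the inner integral $I(t)$ of that paragraph; the Fourier slice theorem then writes $I(t)$ as a one-dimensional integral of $(\partial/\partial\xi_2)^{\ell}\hat\psi$, $\ell=0,1$, on the slice $A^{-1}_{2^j}S^{-T}_{k/2^{j/2}+s}(\xi_1,0)=2^{-j}(\xi_1,-\sigma\xi_1)$, where $\sigma:=k+2^{j/2}s$. After the substitution $\eta=2^{-j}\xi_1$, hypotheses (i) and (ii)---the ratio $|\xi_2|/|\xi_1|$ on the slice being exactly $|\sigma|$, and $\delta>6$, $\gamma\ge 3$---give $\int_\R|(\partial_{\xi_2}^{\ell}\hat\psi)(\eta,-\sigma\eta)|\,d\eta\lesssim(1+|\sigma|)^{-\gamma}$ for $\ell=0,1$. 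The outer integral over the half-line $\{t:\text{line}\subset B\}$ is where the gain must be extracted carefully: bounding it by $(\text{support length})\times\sup_t|I(t)|$ loses one power of $(1+|\sigma|)$, so instead I integrate the exponential first, producing a factor $1/\xi_1$; this is harmless precisely because $\hat\psi(0,\cdot)\equiv0$ and $\partial_{\xi_2}\hat\psi(0,\cdot)\equiv0$ (the vanishing encoded by $\min\{1,|\xi_1|^{\delta}\}$, i.e.\ the wavelet's directional vanishing moments), so the integrand stays integrable and the full decay survives. Collecting the scale prefactors yields $|\innerprod{f}{\psi_\lambda}|\lesssim 2^{-3j/4}(1+|\sigma|)^{-\gamma}\le 2^{-3j/4}/|k+2^{j/2}s|^{3}$ since $\gamma\ge 3$, which is bound (i). Bound (ii) is the specialisation $|s|\ge 3/2$: the shear range $|k|\le\ceilsmall{2^{j/2}}$ forces $|\sigma|\gtrsim 2^{j/2}$, whence $2^{-3j/4}(1+|\sigma|)^{-\gamma}\lesssim 2^{-3j/4}2^{-3j/2}=2^{-9j/4}$.

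For bound (iii), where $\cL$ is horizontal (normal $(0,s)$), I instead integrate first along the $x_1$-axis---the short, wavelet direction, on which $f$ has no discontinuity. Thinking of $\psi(x)=\eta(x_1)\phi(x_2)$, the inner integral now sees $\eta$ oscillating at the fine scale $2^{-j}$, so two vanishing moments annihilate the constant and linear Taylor terms in $x_1$ and leave only the second-order remainder, gaining a full factor $(2^{-j})^{2}=2^{-2j}$; integrating the resulting bound, which carries a factor $\phi(2^{j/2}x_2-m_2)$, over the remaining variable $x_2$ contributes $2^{-j/2}\norm[1]{\phi}$, and together with the $2^{-3j/4}$ normalisation this gives $|\innerprod{f}{\psi_\lambda}|\lesssim 2^{-3j/4}2^{-2j}=2^{-11j/4}$.

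The step I expect to be the main obstacle is the uniform control of the Taylor remainder, particularly for large $|\sigma|$. A crude $L^\infty\times L^1$ bound on $r_t\chi_B$ gives only $2^{-7j/4}$, which violates the target $(1+|\sigma|)^{-3}$ decay once $|\sigma|\sim 2^{j/2}$; to repair this the remainder must be carried through the same Fourier-slice estimate as the polynomial part, so that its extra order of vanishing also produces $(1+|\sigma|)$-decay rather than a bare constant. Making this bookkeeping precise---and verifying that the half-line integration genuinely retains the decay instead of the support-length factor---is the delicate heart of the argument; once those two points are secured, the remaining work is routine substitution and application of hypotheses (i) and (ii).
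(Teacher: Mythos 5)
Your overall architecture matches the paper's: shear so that $\cL$ becomes parallel to a coordinate axis (equivalently, replace $k$ by $\hat k=k+2^{j/2}s$, your $\sigma$), Taylor-expand the smooth part along lines parallel to the singularity, dispose of the polynomial terms via the Fourier slice theorem together with hypotheses (i)--(ii) of Setup~\ref{setup:assumptions-on-generators-CS-2D}, and treat case (iii) by the vanishing moments in the $x_1$-direction. Your frequency-side computation for the polynomial terms is fine; the paper actually takes the cruder route of bounding the outer $t$-integral by the support length $\sim 2^{-j}(1+\abssmall{\hat k})$ times $\sup_t\abs{I(t)}$, losing one power of $(1+\abssmall{\hat k})$ that is absorbed since $\gamma-1\ge 3$, so your integration of the exponential is an admissible variant rather than a necessity. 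Cases (ii) and (iii) as you sketch them coincide with the paper's.

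However, the point you yourself flag as ``the delicate heart'' --- the quadratic Taylor remainder for large $\abssmall{\hat k}$ --- is a genuine gap, and the repair you propose does not work. Carrying the remainder ``through the same Fourier-slice estimate'' cannot produce $(1+\abssmall{\hat k})$-decay: the remainder is by construction the non-polynomial part of $f$ along each line, the directional-vanishing-moment mechanism only annihilates polynomials, and since the remainder's coefficient depends on both variables it cannot be pulled out of the inner integral to expose any oscillation of $\psi_\lambda$. The decay in $\hat k$ of the remainder term comes instead from a geometric fact your argument never uses: after shearing, the support of $\psi_{j,\hat k,m}$ intersected with a line parallel to the singularity is an interval of length only $\sim 2^{-j/2}/\abssmall{\hat k}$ (lines parallel to $\cL$ cross the sheared parallelogram transversally once $\abssmall{\hat k}\gtrsim 1$). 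Expanding from a point on the face $L_2$ of that parallelogram, the quadratic term obeys $(x_2+K_2)^2\lesssim (2^{-j/2}/\abssmall{\hat k})^2$ on the support, so the slice integral $I_3(x_1)$ is bounded by $2^{3j/4}(2^{-j/2}/\abssmall{\hat k})^3=2^{-3j/4}\abssmall{\hat k}^{-3}$, and integrating over $x_1\in[0,K_1]$ with $K_1\sim 2^{-j}\abssmall{\hat k}$ gives $2^{-7j/4}\abssmall{\hat k}^{-2}\lesssim 2^{-3j/4}\abssmall{\hat k}^{-3}$ because $\abssmall{\hat k}\lesssim 2^{j/2}$. In other words, the crude $L^\infty\times L^1$ bound does suffice --- but only after observing that the relevant distance in the Taylor remainder is the extent of $\supp\psi_\lambda$ along the direction of $\cL$, namely $2^{-j/2}/(1+\abssmall{\hat k})$, and not the full shearlet length $2^{-j/2}$. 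Without this observation (or an equivalent substitute), part (i) is unproven precisely in the regime of large $\abssmall{\hat k}$ that the subsequent counting argument relies on.
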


\begin{proof}
  Fix $\lambda$, and let $f \in \cE(\R^2)$. We can without loss of
  generality assume that $f$ is only nonzero on $B$.

  {\em Cases (i) and (ii)}. We first consider
  the cases~(i) and (ii). In these cases, the hyperplane can be written as
  \[ \cL = \setprop{x \in \R^2}{\innerprod{x-x_0}{(-1,s)}=0}\] for some $x_0 \in \R^2$. We shear the hyperplane by
  $S_{-s}$ for $s\in \R$ and obtain
\begin{align*}
  S_{-s}\cL &= \setprop{x \in \R^2}{\innerprod{S_s
      x-x_0}{(-1,s)}=0}  \\
&= \setprop{x \in \R^2}{\innerprod{
    x-S_{-s}x_0}{(S_s)^T(-1,s)}=0}  \\
&= \setprop{x \in \R^2}{\innerprod{ x-S_{-s}x_0}{(-1,0)}=0} \\
& =\setprop{x=(x_1,x_2) \in \R^2}{x_1=\hat x_1}, \quad \text{where $\hat x=S_{-s}x_0$,}
\end{align*}
which is a line parallel to the $x_2$-axis. Here the power of
shearlets comes into play, since it will allow us to only consider line
singularities parallel to the $x_2$-axis. Of course, this requires
that we also modify the shear parameter of the shearlet, that is, we
will consider the right hand side of
\[{\innerprod{f}{\psi_{j,k,m}}}=
{\innerprods{f(S_s\cdot)}{\psi_{j,\hat{k},m}}}
\]
with the new shear parameter $\hat k = k + 2^{j/2}s$. The integrand in
$\innerprods{f(S_s\cdot)}{\psi_{j,\hat{k},m}}$ has the singularity
plane exactly located on the line $x_1 = \hat x_1$, \ie on $S_{-s}\cL$.

To simplify the expression for the integration bounds, we will fix a new origin on $S_{-s}\cL$, that is, on $x_1=\hat
x_1$; the $x_2$ coordinate of the new origin will be fixed in the next paragraph. Since $f$ is only nonzero of $B$, the
function $f$ will be equal to zero on one side of $S_{-s}\cL$, say, $x_1 < \hat x_1$. It therefore suffices to estimate
\begin{align*} 
    {\innerprods{f_0(S_s\cdot)\chi_{\Omega}}{\psi_{j,\hat{k},m}}}
\end{align*}
for $f_0 \in C^\beta(\R^2)$ and $\Omega = \R_+ \times \R$. Let us assume that $\hat k <0$. The
other case can be handled similarly.

Since $\psi$ is compactly supported, there exists some $c > 0$ such that $\supp{\psi} \subset \itvcc{-c}{c}^2$. By a
rescaling argument, we can assume $c=1$. Let
  \begin{align} \label{eq:cPjk}
    \cP_{j,k} := \setprop{ x \in \R^2 }{ \abssmall{x_1+2^{-j/2}kx_2} \le 2^{-j}   ,\abs{x_2} \leq 2^{-j/2}},
  \end{align}
  With this notation we have $\supp \psi_{j,k,0} \subset \cP_{j,k}$. We say that the shearlet normal direction of the
  shearlet box $\cP_{j,0}$ is $(1,0)$, thus the shearlet normal of a sheared element $\psi_{j,k,m}$ associated with
  $\cP_{j,k}$ is $(1,k/2^{j/2})$. Now, we fix our origin so that, relative to this new
  origin, it holds that
  \begin{align*}
    \supp \psi_{j,\hat k,m}  \subset \cP_{j,\hat k} +
    (2^{-j},0) =: \tilde{\cP}_{j,k}.
  \end{align*}
Then one face of $\tilde{\cP}_{j,\hat k}$ intersects the origin.

Next, observe that the parallelogram  $\tilde{\cP}_{j,k}$  has sides $x_2 = \pm 2^{-j/2}$,
  \begin{align*}
    2^{j}x_1 + 2^{j/2}\hat k x_2 &= 0,
    \text{ and } \\
    2^{j}x_1 + 2^{j/2}\hat k x_2  &= 2
    .
  \end{align*}
  As it is only a matter of scaling, we replace the right hand side of the last equation with $1$ for simplicity. Solving the two
  last equalities for $x_2$ gives the following lines:
  \begin{align*}
    L_1: \quad x_2&=- \frac{2^{j/2}}{\hat k} x_1,
    \text{ and } \\
    L_2: \quad x_2&=- \frac{2^{j/2}}{\hat k} x_1 + \frac{2^{-j/2}}{\hat k},
  \end{align*}
  We shows that
  \begin{align} \label{eq:hp-claim2-int-domain}
    \abs{\innerprod{f_0(S_s\cdot)\chi_{\Omega}}{\psi_{j,\hat{k},m}}}
    \lesssim \abs{ \int_{0}^{K_1}
      \int_{L_2}^{L_1} f_0(S_s x) \psi_{j,\hat k, m}(x) \, \mathrm{d}x_2 \mathrm{d}x_1,
       }
  \end{align}
  where the upper integration bound for $x_1$ is $K_1 = 2^{-j}- 2^{-j}\hat k$; this follows from solving $L_2$ for $x_1$
  and using that $\abs{x_2} \le 2^{-j/2}$. We remark that the inner integration over $x_2$ is along lines parallel to
  the singularity line $\pa \Omega = \{0\} \times \R$; as mentioned, this allows us to better handle the singularity
  and will be used several times throughout this section.

 We consider the one-dimensional Taylor expansion for $f_0(S_s \cdot)$ at each
  point $x = (x_1,x_2) \in L_2$ in the $x_2$-direction:
  \begin{align*}
    f_0(S_s x) &=
    a(x_1)+b(x_1)\left(x_2+\frac{2^{j/2}}{\hat{k}}
      \ x_1 \right) + c(x_1,x_2)
    \left(x_2+\frac{2^{j/2}}{\hat{k}} \ x_1 \right)^2,
  \end{align*}
  where $a(x_1),b(x_1)$ and $c(x_1,x_2)$ are all bounded in absolute value by $C(1+\abs{s})^2$. Using
  this Taylor expansion in (\ref{eq:hp-claim2-int-domain}) yields
  \begin{align} \label{eq:hp-split-into-I_l}
    \abs{\innerprod{f_0(S_s\cdot)\chi_{\Omega}}{\psi_{j,\hat{k},m}}}
    \lesssim (1+\abs{s})^2 \abs{
      \int_{0}^{K_1 } \sum_{l=1}^{3}I_{l}(x_1) \, \mathrm{d}x_1 },
  \end{align}
  where
  \begin{align}
    I_1(x_1) &= \abs{\int_{L_1}^{L_2} \psi_{j,\hat k,m}(x) \mathrm{d}x_2}, \\
    I_2(x_1) &= \abs{\int_{L_1}^{L_2} (x_2+K_2) \, \psi_{j,\hat k,m}(x)
      \mathrm{d}x_2}, \\
    I_3(x_1) &= \abs{\int_{0}^{-2^{-j/2}/\hat k} (x_2)^2\, \psi_{j,\hat k,m}(x_1,x_2-K_2) \mathrm{d}x_2},
  \end{align}
  and \[K_2 = \frac{2^{j/2}}{\hat{k}} \ x_1.\]
  We next estimate each integral $I_1$ -- $I_3$ separately.

\smallskip

  {\em Integral $I_1$}. We first estimate $I_1(x_1)$. The Fourier slice theorem, see also (\ref{eq:fourier-slice-thm}), yields directly that
  \[
  I_1(x_1) = \absBig{\int_{\R} \psi_{j,\hat k,m}(x) \mathrm{d}x_2} =
  \absBig{\int_{\R^2} \hat \psi_{j,\hat k,m}(\xi_1,0)\, \expo{2\pi
      i x_1 \xi_1} \mathrm{d}\xi_1} .
  \]
  By the assumptions from Setup~\ref{setup:assumptions-on-generators-CS-2D} we
  have, for all $\xi=(\xi_1,\xi_2,\xi_3)\in \R^2$,
  \begin{align*}
    \absbig{\hat \psi_{j,\hat k,m}(\xi)} \lesssim 2^{-3j/4}
    \absbig{h(2^{-j}\xi_1)}
    \left(1+\absBig{\frac{2^{-j/2}\xi_2}{2^{-j}\xi_1} + \hat
        k}\right)^{-\gamma}
   \end{align*}
  for some $h \in L^1(\R)$. Hence, we can continue our estimate of
  $I_1$ by
  \begin{align*}
    I_1(x_1) \lesssim \int_{\R} 2^{-3j/4}
    \absbig{h(2^{-j}\xi_1)} (1 + \abs{\hat k})^{-\gamma}
     \mathrm{d}\xi_1,
  \end{align*}
  and further, by a change of variables,
  \begin{align} \label{eq:I1}
    I_1(x_1) \lesssim \int_{\R} 2^{j/4} \abs{h(\xi_1)} (1
    + \abssmall{\hat k})^{-\gamma}  \mathrm{d}\xi_1
    \lesssim 2^{j/4} (1 + \abssmall{\hat k})^{-\gamma},
  \end{align}
since $h\in L^1(\R)$.

{\em Integral $I_2$}. We start estimating $I_2(x_1)$ by
    \begin{align*}
  I_2(x_1) \le \abs{\int_{\R} x_2\, \psi_{j,\hat k,m}(x) \mathrm{d}x_2} +
\abs{K_2} \abs{\int_{\R} \psi_{j,\hat k,m}(x) \mathrm{d}x_2} =: S_1 + S_2.
\end{align*}
Applying the Fourier slice theorem again and then utilizing the
decay assumptions on $\hat \psi$  yields
\begin{align*}
  S_1 &= \abs{\int_{\R} x_2 \psi_{j,\hat k,m}(x) \mathrm{d}x_2}
\le   \abs{\int_{\R} \left(\frac{\pa}{\pa \xi_2} \hat \psi_{j,\hat
      k,m}\right)(\xi_1,0)\, \expo{2\pi
      i x_1\xi_1} \mathrm{d}\xi_1} \\
& \lesssim  \int_{\R} 2^{-j/2} 2^{-3j/4} \abs{h(2^{-j}\xi_1)} (1
    + \abssmall{\hat k})^{-\gamma}  \mathrm{d}\xi_1
    \lesssim 2^{-j/4} (1 + \abssmall{\hat k})^{-\gamma}.
\end{align*}
Since $\abs{x_1} \le -\hat{k}_1/2^j$, we have $K_2 \le 2^{-j/2}$. The following estimate of $S_2$ then follows directly
from the estimate of $I_1$:
\begin{align*}
  S_2 \lesssim \abs{K_2} 2^{j/4}\, (1 + \abssmall{\hat
    k})^{-\gamma}
\lesssim 2^{-j/4}  \, (1 + \abssmall{\hat k})^{-\gamma}.
\end{align*}
From the two last estimate, we conclude that $ I_2(x_1) \lesssim 2^{-j/4}\,  (1 + \abssmall{\hat k})^{-\gamma}$.

{\em Integral $I_3$}. Finally, we estimate $I_3(x_1)$ by
\begin{align}\nonumber
  I_3(x_1) &\le \abs{\int_{0}^{2^{-j/2}/\hat k} (x_2)^2\,
    \normsmall[L^\infty]{\psi_{j,\hat k,m}} \, \mathrm{d}x_2} \\ \label{eq:I3}
&\lesssim 2^{3j/4} \abs{\int_{0}^{-2^{-j/2}/\hat k} (x_2)^2\,
     \mathrm{d}x_2} \lesssim 2^{-3j/4} \, \abssmall{\hat k}^{-3}.
\end{align}

We see that $I_2$ decays faster than $I_1$, hence we can leave $I_2$ out of our analysis.
Applying \eqref{eq:I1} and \eqref{eq:I3} to (\ref{eq:hp-split-into-I_l}), we obtain
\begin{equation}
 \abs{\innerprod{f_0(S_s\cdot)\chi_{\Omega}}{\psi_{j,\hat{k},m}}}
  \lesssim  (1+\abs{s})^2 \left(
    \frac{2^{-3j/4}}{(1+\abssmall{\hat k})^{\gamma-1}} + \frac{2^{-7j/4}}{\abssmall{\hat
k}^{2}}\right). \label{eq:hp-claim2-est-with-shear}
\end{equation}

 Suppose that $s \le 3$. Then
 (\ref{eq:hp-claim2-est-with-shear}) reduces to
\begin{align*}
  \abs{\innerprod{f}{\psi_{j,k,m}}}
  &\lesssim \frac{2^{-3j/4}}{(1+\abssmall{\hat k})^{\gamma-1}} + \frac{2^{-7j/4}}{\abssmall{\hat
k}^{2}} \\ 
&\lesssim \frac{2^{-3j/4}}{(1+\abssmall{\hat k})^{3}},
\end{align*}
since $\gamma \ge 4$. This proves (i).

On the other hand, if $s \ge 3/2$, then
\begin{align*} 
  \absbig{\innerprod{f}{\psi_{j,k,m}}}
  \lesssim 2^{-9j/4}.
\end{align*}
To see this, note that
$$
\frac{2^{-\frac{3}{4}j}}{(1+|k+s2^{j/2}|)^3} = \frac{2^{-\frac{9}{4}j}}{(2^{-j/2}+|k/2^{-j/2}+s|)^3} \leq  \frac{2^{-\frac{9}{4}j}}{|k/2^{j/2}+s|^3}
$$
and
$$
|k/2^{j/2}+s| \ge |s|-|k/2^{j/2}| \ge 1/2 - 2^{-j/2} \ge 1/4
$$
for sufficiently large $j \ge 0$, since $|k| \leq \ceil{2^{j/2}} \leq 2^{j/2}+1$, and (ii) is proven.

\medskip

{\em Case (iii)}. Finally, we need to consider the case~(iii), in which the normal vector
of the hyperplane $\cL$ is of the form $(0,s)$ for $s \in \R$. For this, let
$\tilde{\Omega}=\setprop{x \in \R^2}{x_2 \ge 0}$. As in the first part
of the proof, it suffices to consider coefficients of the form
$\innerprod{\chi_{\tilde{\Omega}}f_0}{\psi_{j,k,m}}$, where $\supp
\psi_{j,k,m} \subset \cP_{j,k}-(2^{-j},0) = \tilde\cP_{j,k}$ with
respect to some new origin. As before, the boundary of
$\tilde\cP_{j,k}$ intersects the origin. By the assumptions in
Setup~\ref{setup:assumptions-on-generators-CS-2D}, we have that
\[ \left( \frac{\pa}{\pa \xi_1}\right)^\ell \hat \psi(0,\xi_2)=0 \quad \text{for }
\ell =0,1,\]
which implies that
\[ \int_{\R} x_1^\ell \psi(x) \mathrm{d}x_1 = 0 \quad \text{for all }
x_2 \in \R \text{ and } \ell =0,1.\]
Therefore, we have
\begin{align}\label{eq:hp-shear-preserves-vm}
  \int_{\R} x_1^\ell \psi(S_k x) \mathrm{d}x_1 = 0 \quad \text{for all }
  x_2 \in \R, k\in \R, \text{ and } \ell =0,1,
\end{align}
since a shearing operation $S_k$
preserves vanishing moments along the $x_1$ axis. Now,  we employ Taylor
expansion of $f_0$ in the $x_1$-direction (that is,
again along the singularity line $\partial \tilde{\Omega}$).
By (\ref{eq:hp-shear-preserves-vm}) everything but the last term in
the Taylor expansion disappears, and we
obtain
\begin{align*}
  \abs{\innerprod{\chi_{\tilde{\Omega}}f_0}{\psi_{j,k,m}}} &\lesssim
2^{3j/4} \int_{0}^{2^{-j/2}} \int_{-2^{-j}}^{2^{-j}} (x_1)^2 \,\D x_1
\D x_2  \\
& \lesssim 2^{3j/4}\, 2^{-j/2}\, 2^{-3j} = 2^{-11j/4},
\end{align*}
which proves claim (iii).
\end{proof}

We are now ready show the estimates~\eqref{eq:sparse_mainestimate1}
  and \eqref{eq:sparse_mainestimate2}, which by
  Lem.~\ref{lem:suff-Lambda-eps-cond}(ii) completes the proof of
  Thm.~\ref{thm:opt-sparse-2D}.

For $j \ge 0$, fix $Q \in \mathcal{Q}_j^0$, where $\mathcal{Q}_j^0\subset
\mathcal{Q}_j$ is the collection of dyadic squares that intersects
$\cL$.
 We then have the following
counting estimate:
\begin{equation}
\label{eq:counting}
\card{M_{j,k,Q}}  \lesssim \abssmall{k+2^{j/2}s}+1 
\end{equation}
for each $\abs{k} \le \ceil{2^{j/2}}$, where
\[ 
M_{j,k,Q}:=\setprop{m \in \Z^2 }{ |\supp{\psi_{j,k,m}} \cap \cL
    \cap Q| \neq 0}
 \]

 To see this claim, note that for
a fixed $j$ and $k$ we need to count the number of translates $m \in
\Z^2$ for which the support of $\psi_{j,k,m}$ intersects the discontinuity line $\cL:
x_1=sx_2+b$, $b\in \R$, inside $Q$. Without loss of generality, we can
assume that $Q=\itvcc{0}{2^{-j/2}}^2$, $b=0$, and $\supp \psi_{j,k,0}
\subset C \cdot \cP_{j,k}$, where $\cP_{j,k}$ is defined as
in~(\ref{eq:cPjk}). The shearlet $\psi_{j,k,m}$ will therefore be
concentrated around the line $\mathcal{S}_m: x_1=-\frac{k}{2^{j/2}}x_2+
2^{-j}m_1 + 2^{-j/2}m_2$, see also
Fig.~\ref{fig:directional-van-mom-spatial}. 
We will count the number of $m=(m_1,m_2) \in \Z^2$ for which these two
lines intersect inside $Q$ since this number, up to multiplication with a
constant independent of the scale $j$, will be equal to $\cardsmall{M_{j,k,Q}}$. 

First note that since the size of $Q$ is $2^{-j/2} \times 2^{-j/2}$,
only a finite number of $m_2$ translates can make $S_m \cap \cL \cap Q
\neq \emptyset$ whenever $m_1\in \Z$ is fixed. For a fixed $m_2 \in
\Z$, we then estimate the number of relevant $m_1$ translates.
Equating the $x_1$ coordinates in $\cL$ and $\cS_m$ yields
\[ \left(\frac{k}{2^{j/2}} + s\right)x_2 = 2^{-j}m_1 + 2^{-j/2}m_2.\]
Without loss of generality, we take $m_2=0$ which then leads to 
\[ 2^{-j}\abs{m_1} \le 2^{-j/2}\abs{k+2^{j/2}s} \abs{x_2} \le
2^{-j}\abs{k+2^{j/2}s}, \]
hence $\abs{m_1} \le \abs{k+2^{j/2}s}$. This completes the proof of the claim.

For $\eps>0$, we will consider the shearlet coefficients larger
than $\eps$ in absolute value. Thus, we define:
\[
M_{j,k,Q}(\eps) = \setprop{m \in
  M_{j,k,Q}}{\abs{\innerprod{f}{\psi_{j,k,m}}}>\eps},
\]
where $Q \in \cQ^0_j$. Since the discontinuity line $\cL$ has finite
length in $[0,1]^2$, we have the estimate $\cardsmall{ \mathcal{Q}^0_j} \lesssim 
2^{j/2}$. Assume $\cL$ has normal vector $(-1,s)$ with $|s| \leq 3$.
Then, by Thm.~\ref{thm:decay-hyperplane}(i),
$|\innerprod{f}{\psi_{j,k,m}}| > \eps$ implies that
\begin{equation}\label{eq:tool1}
|k+2^{j/2}s| \leq \eps^{-1/3}2^{-j/4}.
\end{equation}
By Lem.~\ref{lem:suff-Lambda-eps-cond}(i) and the estimates~\eqref{eq:counting} and \eqref{eq:tool1}, we have that
\begin{eqnarray*}
  \card{ \varLambda(\eps)} &\lesssim&
  \sum_{j=0}^{\frac{4}{3}\log_2(\eps^{-1})+C}\sum_{Q \in
    \mathcal{Q}^0_j} \; \sum_{\{\hat k: \abssmall{\hat k}\le \eps^{-1/3}2^{-j/4}\}} \card{M_{j,k,Q}(\eps)} \\
  &\lesssim&  \sum_{j = 0}^{\frac{4}{3}\log_2(\eps^{-1})+C}\sum_{Q \in \mathcal{Q}^0_j} \; \sum_{\{\hat k: \abssmall{\hat k}\le \eps^{-1/3}2^{-j/4}\}} (|\hat k|+1) \\
  &\lesssim&  \sum_{j = 0}^{\frac{4}{3}\log_2(\eps^{-1})+C} \card{
    \mathcal{Q}^0_j} \; (\eps^{-2/3}2^{-j/2})\\
  &\lesssim&  \eps^{-2/3} \sum_{j =
    0}^{\frac{4}{3}\log_2(\eps^{-1})+C} 1  \lesssim \eps^{-2/3}\,
  \log_2(\eps^{-1}),
\end{eqnarray*}
where, as usual, $\hat k = k+s2^{j/2}$.  By
Lem.~\ref{lem:suff-Lambda-eps-cond}(ii), this leads to the sought
estimates. 

On the other hand, if $\cL$ has normal vector $(0,1)$ or $(-1,s)$ with
$|s|
\ge 3$, then
$|\innerprod{f}{\psi_{\lambda}}| > \eps$ implies that
\begin{equation*} 
j \leq \frac{4}{9}\log_2(\eps^{-1}),
\end{equation*}
which follows by assertions~(ii) and~(iii) in
Thm.~\ref{thm:decay-hyperplane}. Hence, we have
\[
\card{\varLambda(\eps)} \lesssim
\sum_{j=0}^{\frac{4}{9}\log_2(\eps^{-1})} \sum_{k}\; \sum_{Q \in \mathcal{Q}^0_j} \card{M_{j,k,Q}(\eps)}.
\]
Note that $\card{M_{j,k,Q}} \lesssim 2^{j/2}$, since $\card{ \{m\in
  \Z^2 : |\supp\psi_{\lambda} \cap Q| \neq 0\}} \lesssim 2^{j/2}$ for
each $Q \in \mathcal{Q}_j$, and that
the number of shear parameters $k$ for each scale parameter $j \ge 0$ is bounded by $C2^{j/2}$.
Therefore,
\begin{align*}
  \card{\varLambda(\eps)} \lesssim
  \sum_{j=0}^{\frac{4}{9}\log_2(\eps^{-1})} 2^{j/2}\, 2^{j/2}\, 2^{j/2} = 
\sum_{j=0}^{\frac{4}{9}\log_2(\eps^{-1})} 2^{3j/2} \lesssim
 2^{\frac{4}{9}\cdot \frac{3}{2} \cdot \log_2(\eps^{-1})}
\lesssim  \eps^{-2/3}.
\end{align*}
This implies our sought estimate~\eqref{eq:sparse_mainestimate1}
which, together with the estimate for $\abs{s}\le 3$,
completes the proof of Thm.~\ref{thm:opt-sparse-2D} for $L=1$ under
Setup~\ref{setup:assumptions-on-generators-CS-2D}. 


\subsubsection{The Case $L\neq 1$}
\label{sec:case-l-neq-1}
We now turn to the extended class of cartoon-lime images
$\cE^2_L(\R^2)$ with $L \neq 1$, i.e., in which the singularity curve
is only required to be piecewise $C^2$. We say that $p\in \R^2$ is a
corner point if $\pa B$ is not $C^2$ smooth in $p$. The main focus
here will be to investigate shearlets that interact with one of the
$L$ corner points. We will argue that Thm.~\ref{thm:opt-sparse-2D}
also holds in this extended setting. The rest of the
proof, that is, for shearlets \emph{not} interacting with corner
points, is of course identical to that presented in
Sect.~\ref{sec:proof-band-limited-2d} and~\ref{sec:proof-comp-supp-2d}.

In the compactly supported case one can simply count the number of
shearlets interacting with a corner point at a given scale. Using
Lem.~\ref{lem:suff-Lambda-eps-cond}(i), one then arrives at the
sought estimate. On the other hand, for the band-limited case one
needs  to measure the sparsity of the shearlet coefficients for $f$
localized to each dyadic square. We present the details in the
remainder of this section.

\paragraph{Band-limited Shearlets} 
In this case, it is sufficient to consider a dyadic square $Q \in
\mathcal{Q}_j^0$ with $j \ge 0$ such that $Q$ contains a singular
point of edge curve. Especially, we may assume that $j$ is sufficiently
large so that the dyadic square $Q \in \mathcal{Q}_j^0$ contains a
single corner point of $\partial B$. The following theorem analyzes
the sparsity of the shearlet coefficients for such a dyadic square $Q
\in \mathcal{Q}^0_j$.
\begin{theorem} \label{thm:piece_nonsmooth}
  Let $f \in \cE^2_L(\R^2)$ and $Q \in \mathcal{Q}_j^0$ with $j \ge 0$
  be a dyadic square containing a singular point of the edge curve. The
  sequence of shearlet coefficients
  $\{d_\lambda:=\innerprod{f_Q}{\psi_{\lambda}}:\lambda \in
  \varLambda_j\}$ obeys
\[
\norm[w\ell^{2/3}]{(d_\lambda)_{\lambda\in\varLambda_j}} \leq  C.
\]
\end{theorem}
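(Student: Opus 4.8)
The plan is to reduce the weak $\ell^{2/3}$ bound to a counting estimate and then to isolate the genuinely new contribution produced by the corner point. Writing $p$ for the corner and recalling the equivalent form of the quasi-norm, the assertion $\norm[w\ell^{2/3}]{(d_\lambda)_{\lambda\in\varLambda_j}} \le C$ amounts to showing that, for every $\eps>0$, the number of indices $\lambda \in \varLambda_j$ with $\abs{d_\lambda}>\eps$ is at most a constant multiple of $\eps^{-2/3}$, with the constant independent of $j$. Since the crude H\"older estimate $\abs{d_\lambda}=\abs{\innerprod{f_Q}{\psi_\lambda}} \le \norm[\infty]{f_Q}\,\norm[1]{\psi_\lambda} \lesssim 2^{-3j/4}$ is available for every $\lambda$, only thresholds $\eps \lesssim 2^{-3j/4}$ have to be treated.

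First I would split the relevant indices into three groups and combine them with the $p$-triangle inequality for weak $\ell^{2/3}$ (legitimate since $p=2/3\le1$), exactly as in the proof of Thm.~\ref{thm:both}: (a) shearlets whose support contains $p$; (b) shearlets meeting the first $C^2$ edge piece $\Gamma_1$ issuing from $p$ but not containing $p$; and (c) the analogue for the second edge piece $\Gamma_2$. Every remaining shearlet either avoids $\pa B$ entirely, so that $f_Q$ is $C^2$ on its support and the coefficient is negligible, or crosses a smooth edge piece transversally, in which case the infinite directional vanishing moments of the band-limited generator (cf.~\eqref{eq:band}) annihilate the coefficient; these indices may therefore be discarded.

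The heart of the matter is group (a). For a fixed shear $k$ the supports $\{\supp\psi_{j,k,m}\}_m$ tile $\R^2$ with bounded overlap, so the single point $p$ belongs to only $O(1)$ of them; summing over the $O(2^{j/2})$ admissible shears $\abs{k}\le\ceilsmall{2^{j/2}}$ shows that at most $O(2^{j/2})$ shearlets contain $p$, and for each the H\"older bound gives $\abs{d_\lambda}\le C_0\,2^{-3j/4}$. Here no additional decay can be expected, because a corner is a genuine singularity seen by shearlets of every orientation; nevertheless this crude information is exactly what the weak quasi-norm needs. Ordering the group-(a) coefficients one has $d^\ast_n \le C_0\,2^{-3j/4}$ with $d^\ast_n=0$ once $n\gtrsim 2^{j/2}$, so that
\[
\norm[w\ell^{2/3}]{(d_\lambda)_{\text{(a)}}}=\sup_{n}n^{3/2}\abs{d^\ast_n} \lesssim \bigl(2^{j/2}\bigr)^{3/2}2^{-3j/4} \lesssim 1 .
\]
On the support of a shearlet in group (b) or (c) the discontinuity is a single smooth $C^2$ curve, so the estimate of Thm.~\ref{thm:nonsmooth} applies locally and yields $\norm[w\ell^{2/3}]{(d_\lambda)_{\text{(b)}}} \lesssim 2^{-3j/4}\le C$, and likewise for (c). Combining the three groups gives $\norm[w\ell^{2/3}]{(d_\lambda)_{\lambda\in\varLambda_j}}^{2/3} \lesssim 1+2^{-j/2}+2^{-j/2}\lesssim 1$, which is the claim.

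The hard part will be to make the trichotomy rigorous for band-limited generators, which are not compactly supported: the phrase ``$\supp\psi_\lambda$ contains $p$'' has to be replaced by effective support, and the Schwartz decay of $\psi$ must be invoked to show that the tails reaching $p$, as well as the transversal contributions discarded in the second step, are summably small and corrupt neither the $O(2^{j/2})$ count in group (a) nor the negligibility of the remaining indices. The second delicate point is to confirm that at the corner one genuinely cannot improve on the H\"older bound $2^{-3j/4}$, so that the whole estimate must rest on the counting gain encoded in the weak $\ell^{2/3}$ quasi-norm rather than on any pointwise decay of individual coefficients.
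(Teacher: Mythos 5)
Your counting computation at the corner is exactly the right scaling -- $O(2^{j/2})$ shearlets per scale meeting the corner, each bounded by $2^{-3j/4}$ via H\"older, giving $\sup_n n^{3/2}\abs{d^\ast_n}\lesssim (2^{j/2})^{3/2}2^{-3j/4}=1$ -- and this is precisely how the chapter handles corner points in the \emph{compactly supported} setting (Case 1 of Sect.~\ref{sec:case-l-neq-1}). But Thm.~\ref{thm:piece_nonsmooth} is the \emph{band-limited} statement, and there your argument has a genuine gap rather than a deferred technicality. The entire trichotomy rests on classifying $\lambda$ by whether $\supp\psi_\lambda$ contains $p$ or meets $\Gamma_1$ or $\Gamma_2$; for band-limited generators every $\psi_\lambda$ has unbounded support, and the chapter states explicitly (Sect.~\ref{sec:band-limited-versus-comp}) that this kind of case distinction is not permissible -- one must instead localize $f$ by a smooth partition of unity and estimate the sparsity of the \emph{whole} coefficient sequence in weak $\ell^{2/3}$, rather than sort individual coefficients by support. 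Replacing ``support'' by ``effective support'' is not a cosmetic fix: the tails of all $O(2^{j/2})\cdot O(2^{j})$ shearlet elements at scale $j$ reach $p$, and showing that their aggregate contribution respects the $\eps^{-2/3}$ count is the actual content of the proof. The paper itself does not carry this out but outsources it to the curvelet argument of \cite{CD04}, which proceeds by exactly this localization-plus-weak-$\ell^p$ route.

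Two further points. First, your treatment of groups (b) and (c) invokes Thm.~\ref{thm:nonsmooth} ``locally'', but that theorem is stated for $f\in\cE^2(\R^2)$, i.e.\ for a globally $C^2$ edge; here $f_Q$ itself contains the corner, so you would need to replace $f_Q$ by a corner-free modification agreeing with it on the relevant supports -- unproblematic for compactly supported generators, nontrivial for band-limited ones. Second, discarding the ``transversal'' shearlets via \eqref{eq:band} is only exact for a \emph{linear} discontinuity; for a curved $C^2$ edge these coefficients are small but nonzero, and their contribution is exactly what Thm.~\ref{thm:nonsmooth} quantifies, so they cannot simply be dropped at the outset. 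In short: as a proof of the compactly supported analogue your argument is essentially complete and matches the paper; as a proof of Thm.~\ref{thm:piece_nonsmooth} as stated, the step you yourself flag as ``the hard part'' is the theorem.
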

The proof of Thm.~\ref{thm:piece_nonsmooth} is based on a proof of an
analog result for curvelets \cite{CD04}. Although the proof in
\cite{CD04} considers only curvelet coefficients, essentially the same
arguments, with modifications to the shearlet setting, can be applied
to show Thm.~\ref{thm:piece_nonsmooth}.

Finally, we note that the number of dyadic squares $Q \in
\mathcal{Q}_j^0$ containing a singular point of $\partial B$ is
bounded by a constant not depending on $j$; one could, \eg take $L$
as this constant. Therefore, applying Thm.~\ref{thm:piece_nonsmooth}
and repeating the arguments in Sect.~\ref{sec:proof-band-limited-2d}
completes the proof of Thm.~\ref{thm:opt-sparse-2D} for $L \neq 1$
for Setup~\ref{setup:assumptions-on-generators-BL-2D}.

\paragraph{Compactly Supported Shearlets}
 In this case, it is sufficient to consider the following two cases.
\begin{description}
\item[{\em Case 1}.] \hspace*{-1em} The shearlet $\psi_{\lambda}$ intersects a corner point, in which two $C^2$ curves $\partial B_0$ and $\partial B_1$, say, meet
(see Fig.~\ref{fig:shear-a}).
\item[{\em Case 2}.] \hspace*{-1em} The shearlet $\psi_{\lambda}$ intersects two edge curves $\partial B_0$ and $\partial B_1$, say, simultaneously, but it does not intersect
a corner point (see Fig.~\ref{fig:shear-b}).
\end{description}
\begin{figure}[ht]
\centering
  \begin{minipage}{0.95\textwidth}
    \includegraphics[height=1.4in]{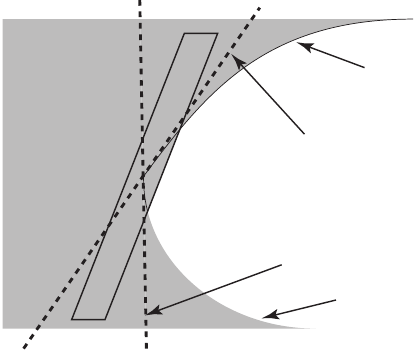}
    \put(-60,44){\footnotesize{$B_1$}}
    \put(-110,70){\footnotesize{$B_0$}}
    \put(-33,28){\footnotesize{$\cL_1$}}
    \put(-30,55){\footnotesize{$\cL_0$}}
    \put(-19,13){\footnotesize{$\partial B_1$}}
    \put(-12,74){\footnotesize{$\partial B_0$}} 
\hspace{7.1em}
    \includegraphics[height=1.4in]{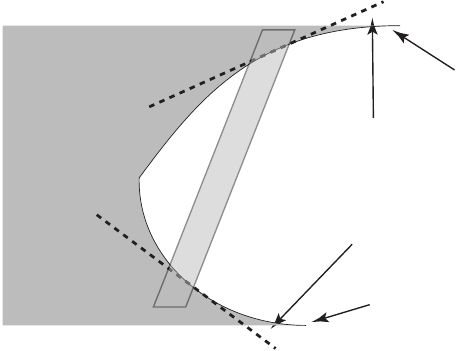}
    \put(-80,44){\footnotesize{$B_1$}}
    \put(-110,70){\footnotesize{$B_0$}}
    \put(-26,32){\footnotesize{$\cL_1$}}
    \put(-27,58){\footnotesize{$\cL_0$}}
    \put(-19,13){\footnotesize{$\partial B_1$}}
    \put(1,72){\footnotesize{$\partial B_0$}}
  \end{minipage}

\begin{minipage}[t]{0.45\textwidth}
    \caption{ A shearlet $\psi_{\lambda}$ intersecting a corner
      point, in which two edge curves $\partial B_0$ and $\partial
      B_1$ meet. $\cL_0$ and $\cL_1$ are tangents to the edge curves
      $\partial B_0$ and $\partial B_1$ in this corner point.}
    \label{fig:shear-a}
  \end{minipage}
\hspace{0.06\textwidth} 
  \begin{minipage}[t]{0.45\textwidth}
    \caption{ A shearlet $\psi_{\lambda}$ intersecting two edge curves
      $\partial B_0$ and $\partial B_1$ which are part of the boundary
      of sets $B_0$ and $B_1$. $\cL_0$ and $\cL_1$ are tangents to the
      edge curves $\partial B_0$ and $\partial B_1$ in points
      contained in the support of $\psi_{\lambda}$.}
    \label{fig:shear-b}
  \end{minipage}
\end{figure}

We aim to show that $\card{\varLambda(\eps)} \lesssim \eps^{-\frac{2}{3}}$ in both cases. By
Lem.~\ref{lem:suff-Lambda-eps-cond}, this will be sufficient.

\subparagraph{Case 1.}
Since there exist only finitely many corner points with total number not depending on scale $j \ge 0$ and the number of
shearlets $\psi_{\lambda}$ intersecting each of corner points is bounded by $C2^{j/2}$, we have
\[
\card{\varLambda(\eps)} \lesssim \sum_{j=0}^{\frac{4}{3}\log_2{(\eps^{-1})}} 2^{j/2} \lesssim  \eps^{-\frac{2}{3}}.
\]

\subparagraph{Case 2.}
As illustrated in Fig.~\ref{fig:shear-b}, we can write the function $f$ as
\[
f_0\chi_{B_0}+f_1\chi_{B_1} = (f_0-f_1)\chi_{B_0}+f_1 \quad \text{in} \,\, Q,
\]
where $f_0,f_1 \in C^2([0,1]^2)$ and $B_0,B_1$ are two disjoint subsets of $[0,1]^2$.
As we indicated before, the rate for optimal sparse approximation is achieved for the smooth
function $f_1$. Thus, it is sufficient to consider $f:= g_0\chi_{B_0}$ with $g_0 = f_0-f_1 \in C^2([0,1]^2)$.
By a \emph{truncated} estimate, we can replace two boundary curves
$\partial B_0$ and $\partial B_1$ by hyperplanes of the form 
\[ \cL_i = \setprop{x \in \R^2}{\innerprod{x-x_0}{(-1,s_i)}=0} \quad \text{for}\,\,i=0,1.\]
In the sequel,
we assume $\max_{i=0,1}|s_i| \leq 3$ and mention that the other cases can be handled similarly.
Next define
$$
M^i_{j,k,Q} = \setprop{m \in \Z^2 }{ |\supp{\psi_{j,k,m}} \cap \cL_i \cap Q| \neq 0} \quad \text{for}\,\, i = 0,1,
$$
for each $Q \in \tilde{\mathcal{Q}^0_j}$, where $\tilde{\mathcal{Q}^0_j}$ denotes the dyadic squares containing the
two distinct boundary curves. By an
estimate similar to \eqref{eq:counting}, we obtain
\begin{equation}\label{eq:piece_counting}
\card{M^0_{j,k,Q} \cap M^1_{j,k,Q}} \lesssim \min_{i=0,1}(|k+2^{j/2}s_i|+1).
\end{equation}
Applying Thm.~\ref{thm:decay-hyperplane}(i) to each of the
hyperplanes $\cL_0$ and $\cL_1$, we also have
\begin{equation}\label{eq:piece_estimate}
|\langle f,\psi_{j,k,m}\rangle| \leq C \cdot \max_{i=0,1} \Bigl\{ \frac{2^{-\frac{3}{4}j}}{|2^{j/2}s_i+k|^3}\Bigr\}.
\end{equation}
Let
$\hat k_i = k+2^{j/2}s_i$ for $i=0,1$.
Without loss of generality, we may assume that $\hat k_0 \leq \hat k_1$. Then, \eqref{eq:piece_counting} and \eqref{eq:piece_estimate} imply that
\begin{equation}\label{eq:piece_counting1}
\card{M^0_{j,Q} \cap M^1_{j,Q}} \lesssim |\hat k_0|+1
\end{equation}
and
\begin{equation}\label{eq:piece_estimate1}
|\langle f,\psi_{j,k,m}\rangle| \lesssim \frac{2^{-\frac{3}{4}j}}{|\hat k_0|^3}.
\end{equation}
Using \eqref{eq:piece_counting1} and \eqref{eq:piece_estimate1}, we now estimate $\card{\varLambda(\eps)}$ as follows:
\begin{eqnarray*}
\card{ \varLambda(\eps)} 
&\lesssim&  \sum_{j = 0}^{\frac{4}{3}\log_2(\eps^{-1})+C}
\sum_{Q \in   \tilde{\mathcal{Q}^0_j}}\, \sum_{\hat k_0}   (1+|\hat k_0|) \\
&\lesssim&  \sum_{j = 0}^{\frac{4}{3}\log_2(\eps^{-1})+C} \card{
  \tilde{\mathcal{Q}^0_j}} \; (\eps^{-2/3}2^{-j/2}) \lesssim \epsilon^{-2/3}.
\end{eqnarray*}
Note that $\cardsmall{ \tilde{\mathcal{Q}^0_j}} \leq C$ since the number of $Q \in \mathcal{Q}_j$ containing two distinct boundary
curves $\partial B_0$ and $\partial B_1$ is bounded by a constant independent of $j$. The result is proved.


\subsection{Optimal Sparse Approximations in 3D}
\label{sec:extensions-3d}

When passing from 2D to 3D, the complexity
of anisotropic structures changes significantly. In particular, as
opposed to the two dimensional setting, geometric structures of
discontinuities for piecewise smooth 3D functions consist of two
morphologically different types of structure, namely surfaces and
curves. Moreover, as we saw in Sect.~\ref{sec:optim-sparse-appr-2d},
the analysis of sparse approximations in 2D heavily depends  on
reducing the analysis to affine subspaces of $\R^2$. Clearly, these
subspaces always have dimension one in 2D. In dimension three,
however, we have subspaces of dimension one and two, and therefore the analysis
needs to performed on subspaces of the `correct' dimension.

This issue manifests itself when performing the analysis for band-limited shearlets,
since one needs to replace the Radon transform used in
2D with a so-called X-ray transform. For compactly supported shearlets,
one needs to perform the analysis on carefully chosen hyperplanes of
dimension two. This will allow for using estimates from the two dimensional
setting in a slice by slice manner.


As in the two dimensional setting, analyzing the decay of individual
shearlet coefficients $\innerprod{f}{\psi_{\lambda}}$ can be used to
show optimal sparsity for compactly supported shearlets while the
sparsity of the sequence of shearlet coefficients with respect to the
weak $\ell^p$ quasi norm should be analyzed for band-limited
shearlets.

\subsubsection{A Heuristic Analysis}
\label{sec:heuristic-analysis-3d}

As in the heuristic analysis for the 2D situation debated in Sect.~\ref{sec:heuristic-analysis}, we can again split
the proof into similar three cases as shown in Fig.~\ref{fig:three-cases}.
\begin{figure}[ht]
\centering
\subfloat[Sketch of shearlets whose support does not intersect
   the surface $\partial B$.]{%
\parbox[t]{0.29\textwidth}{\centering\includegraphics[height=3cm]{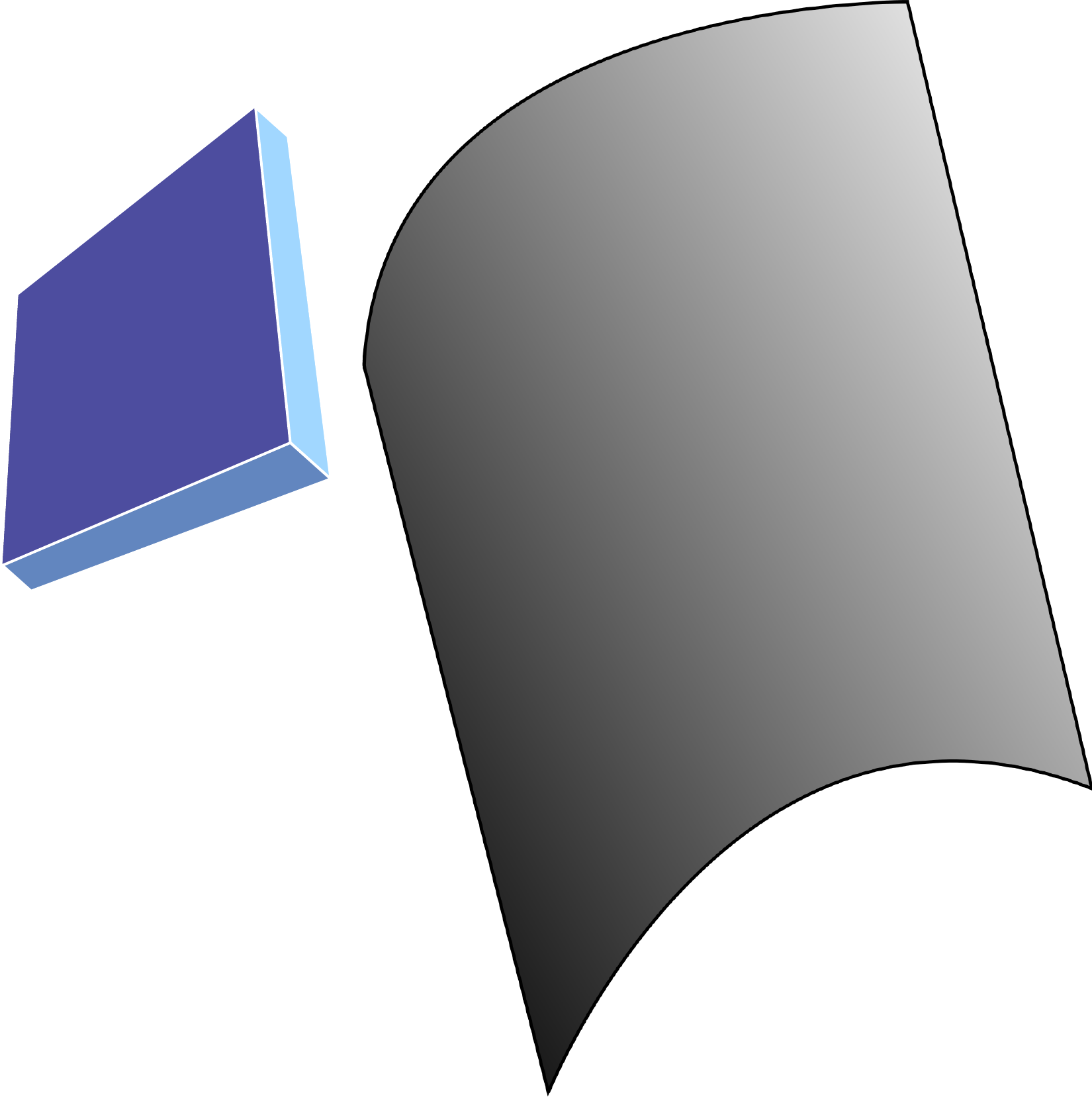}}
\label{fig:case1}
}\quad
\subfloat[Sketch of shearlets whose support overlaps with
    $\partial B$ and is nearly tangent.]{%
\parbox[t]{0.29\textwidth}{\centering\includegraphics[height=3cm]{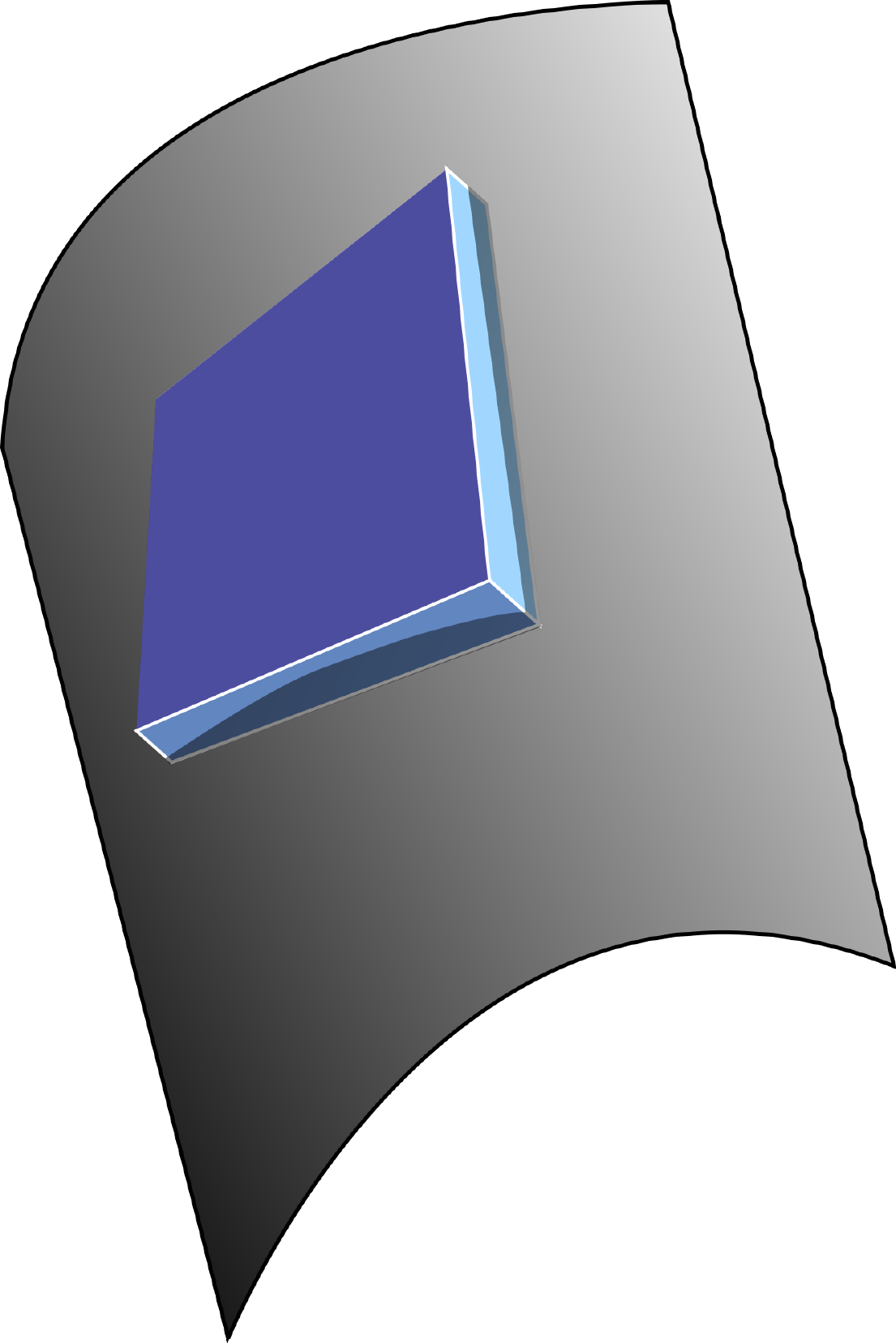}}
\label{fig:case2}
}\quad
\subfloat[Sketch of shearlets whose support overlaps with
    $\partial B$ in a non-tangentially way.]{%
\parbox[t]{0.29\textwidth}{\centering\includegraphics[height=3cm]{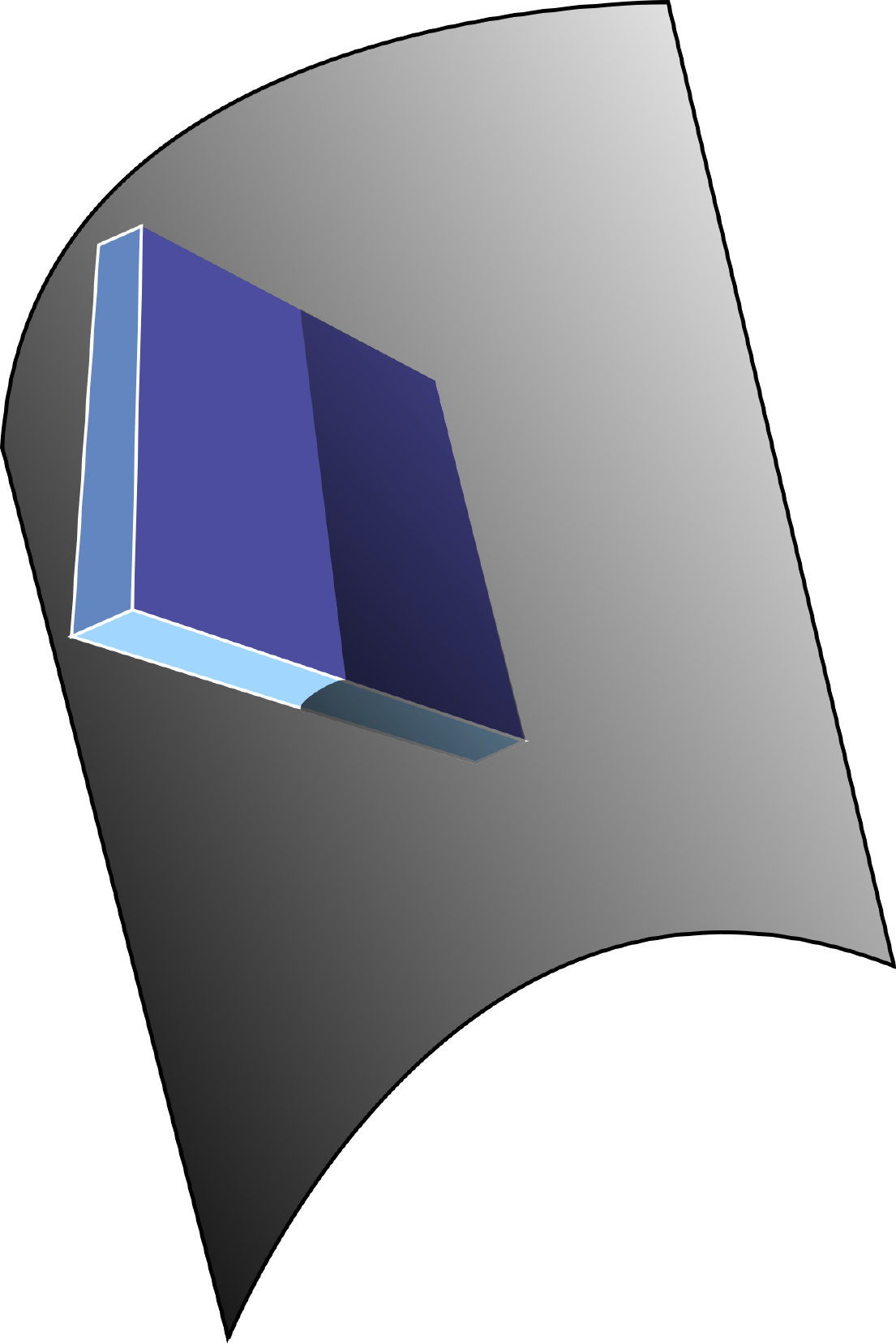}}
\label{fig:case3}
}
\caption{The three types of shearlets $\psi_{j,k,m}$ and boundary
  $\partial B$ interactions considered in the heuristic 3D analysis.
  Note that only a section of $\partial B$ is shown.}
\label{fig:three-cases}
\end{figure}

Only case~(b) differs significantly from the 2D setting, so we
restrict out attention to that case.

For case~(b) there are at most $O(2^{j})$ coefficients at
scale $j>0$, since the plate-like elements are of size $2^{-j/2}$
times $2^{-j/2}$ (and `thickness' $2^{-j}$). By H\"older's inequality,
we see that
\[ \abs{\innerprod{f}{\psi_{j,k,m}}} \le \norm[L^\infty]{f}
  \norm[L^1]{\psi_{j,k,m}} \le C_1 \, 2^{-j}
  \norm[L^1]{\psi} \le C_2 \cdot 2^{-j} \]
for some constants $C_1, C_2>0$. Hence, we have $O(2^{j})$
coefficients bounded by $C_2 \cdot 2^{-j}$.

Assuming the coefficients in case (a) and (c) to be negligible, the
$n$th largest shearlet coefficient $c^\ast_{\,n}$ is therefore bounded
by
\[ \abs{c^\ast_{\,n}} \le C\cdot  n^{-1},\]
which in turn implies
\[
    \sum_{n>N} \abs{c^\ast_{\,n}}^2 \leq \sum_{n>N}C \cdot n^{-2} \le
    C \cdot \int_N^\infty x^{-2} \D x \le C \cdot N^{-1}.
 \]
Hence, we meet the optimal rates~(\ref{eq:approx-fastest-decay-upper})
and (\ref{eq:coeff-fastest-decay-upper}) from Dfn.~\ref{def:optimal}.
This, at least heuristically, shows that shearlets provide optimally
sparse approximations of 3D cartoon-like images.

\subsubsection{Main Result}
\label{sec:main-result-3d}

The hypotheses needed for the band-limited case, stated in
Setup~\ref{setup:assumptions-on-generators-BL-3D}, are a straightforward
generalization of Setup~\ref{setup:assumptions-on-generators-BL-2D} in
the two-dimensional setting.

\begin{setup}\label{setup:assumptions-on-generators-BL-3D}
  The generators $\phi, \psi, \tilde{\psi},\breve{\psi} \in L^2(\R^3)$
  are band-limited and $C^\infty$ in the frequency domain.
  Furthermore, the shearlet system $SH(\phi,\psi,\tilde{\psi},\breve{\psi};c)$ forms a
  frame for $L^2(\R^3)$
  (cf. the construction in
  Sect.~\ref{subsec:bandlimited}).
\end{setup}

For the compactly supported generators we will also use hypotheses in the spirit
of Setup~\ref{setup:assumptions-on-generators-CS-2D}, but with slightly stronger
and more sophisticated assumption on vanishing moment property of the
generators \ie $\delta > 8$ and $\gamma \ge 4$.

\begin{setup}\label{setup:assumptions-on-generators-CS-3D}
  The generators $\phi, \psi, \tilde{\psi}, \breve{\psi} \in
  L^2(\R^3)$ are compactly supported, and the shearlet system
  $SH(\phi,\psi,\tilde{\psi},\breve{\psi};c)$ forms a frame for
  $L^2(\R^3)$. Furthermore, the function $\psi$ satisfies, for all $\xi =
  (\xi_1,\xi_2,\xi_3) \in \R^3$,
      \begin{enumerate}[(i)]
      \item $|\hat\psi(\xi)| \le C \cdot \min\{1,|\xi_1|^{\delta}\}
        \,
        \min\{1,|\xi_1|^{-\gamma}\}
        \, \min\{1,|\xi_2|^{-\gamma}\} \,
        \min\{1,|\xi_3|^{-\gamma}\}$, and
      \item $\left|\frac{\partial}{\partial \xi_i}\hat
          \psi(\xi)\right| \le |h(\xi_1)|
        \left(1+\frac{|\xi_2|}{|\xi_1|}\right)^{-\gamma}
        \left(1+\frac{|\xi_3|}{|\xi_1|}\right)^{-\gamma},$
      \end{enumerate}
      for $i=2,3$, where $\delta > 8$, $\gamma \ge 4$, $h \in
      L^1(\R)$, and $C$ a constant, and $\tilde{\psi}$
      and $\breve{\psi}$ satisfy analogous conditions with the obvious
      change of coordinates (cf. the construction in
      Sect.~\ref{subsec:compactsupport}).
  \end{setup}

The main result can now be stated as follows.

\begin{theorem}[\cite{KLL10, GL10_3d}]
\label{thm:opt-sparse-3D}
Assume Setup~\ref{setup:assumptions-on-generators-BL-3D} or \ref{setup:assumptions-on-generators-CS-3D}. Let $L=1$.
For any $\cp > 0$ and $\mu >0$, the
shearlet frame $SH(\phi,\psi,\tilde{\psi},\breve{\psi};c)$
provides optimally sparse approximations of functions $f \in
\cE_L^2(\R^3)$ in the sense of Dfn. \ref{def:optimal}, i.e., 
\begin{align*}
  \norm[L^2]{f-f_N}^2 &\lesssim N^{-1}(\log{N})^2),  &\text{as $N \to \infty$,}
\intertext{and}
   \abs{c^\ast_{\,n}} &\lesssim n^{-1} (\log
   n),  &\text{as $n \to \infty$,} 
\end{align*}
 where $c=\setprop{\innerprod{f}{\mathring{\psi}_\lambda}}{\lambda \in
     \varLambda, \mathring{\psi}=\psi, \mathring{\psi}=
     \tilde\psi, \text{ or } \mathring{\psi}=
     \breve \psi}$ and $c^\ast=(c^\ast_n)_{n\in \N}$ is a decreasing
   (in modulus) rearrangement of $c$.
\end{theorem}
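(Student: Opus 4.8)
The plan is to reduce the theorem, exactly as in the two-dimensional case, to a single counting estimate on each scale, and then to establish that estimate separately for the two setups. First I would prove the three-dimensional analog of Lemma~\ref{lem:suff-Lambda-eps-cond}. The only input that changes is the trivial size bound: since $\abs{\innerprod{f}{\psi_{\lambda}}} \le 2^{-j}\norm[\infty]{f}\norm[1]{\psi}$ for the plate-like generators (the $L^1$-normalization now produces $2^{-j}$ rather than $2^{-3j/4}$), the relevant scales satisfy $j \le \log_2(\eps^{-1}) + C$ instead of $j \le \tfrac{4}{3}\log_2(\eps^{-1}) + C$. Consequently, if one can show
\begin{equation*}
\cardsmall{\varLambda_j(\eps)} \lesssim \eps^{-1} \qquad \text{for every } j \ge 0,
\end{equation*}
then summing over the $O(\log_2(\eps^{-1}))$ admissible scales yields $\cardsmall{\varLambda(\eps)} \lesssim \eps^{-1}\log_2(\eps^{-1})$, which translates into $\abs{c^\ast_{\,n}} \lesssim n^{-1}\log n$ and $\norm[L^2]{f-f_N}^2 \lesssim N^{-1}(\log N)^2$ via Lemma~\ref{lemma:n-term-frame-approx}. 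So the entire problem is the bound $\cardsmall{\varLambda_j(\eps)} \lesssim \eps^{-1}$.

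For the band-limited case (Setup~\ref{setup:assumptions-on-generators-BL-3D}) I would mimic Sect.~\ref{sec:proof-band-limited-2d}: fix $j$, localize $f$ with a smooth partition of unity subordinate to the dyadic cubes $Q$ of side $2^{-j/2}$, and split $\mathcal{Q}_j = \mathcal{Q}_j^0 \cup \mathcal{Q}_j^1$ according to whether $Q$ meets the discontinuity surface $\pa B$. Because $\pa B$ is a $C^2$-surface of finite area and each cube has faces of area $2^{-j}$, one has $\cardsmall{\mathcal{Q}_j^0}\lesssim 2^{j}$, while $\cardsmall{\mathcal{Q}_j^1}\lesssim 2^{3j/2}$ from the volume of $\itvcc{0}{1}^3$. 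The target is the weak-$\ell^1$ analog of Thms.~\ref{thm:nonsmooth} and~\ref{thm:smooth}, namely $\norm[w\ell^{1}]{(\innerprod{f_Q}{\psi_\lambda})_{\lambda\in\varLambda_j}} \lesssim 2^{-j}$ for $Q\in\mathcal{Q}_j^0$ and $\lesssim 2^{-2j}$ for $Q\in\mathcal{Q}_j^1$; combining these per-cube contributions gives $\cardsmall{\mathcal{Q}_j^0}\,2^{-j} + \cardsmall{\mathcal{Q}_j^1}\,2^{-2j} \lesssim 1 + 2^{-j/2}\lesssim 1$, i.e.\ the desired $\cardsmall{\varLambda_j(\eps)}\lesssim\eps^{-1}$.

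For the compactly supported case (Setup~\ref{setup:assumptions-on-generators-CS-3D}) I would instead estimate individual coefficients. Splitting on whether $\abs{\supp\psi_\lambda\cap\pa B}$ vanishes, the non-interacting elements are handled by the frame property together with the $C^2$-smoothness of $f$, exactly as in \emph{Case~1} of Sect.~\ref{sec:proof-comp-supp-2d}, and contribute negligibly. For an interacting element I would prove the three-dimensional analog of Thm.~\ref{thm:decay-hyperplane}: a \emph{truncated} estimate first replaces $\pa B$ by its tangent plane $\cL\subset\R^3$ with normal $(-1,s_1,s_2)$; shearing $\cL$ onto $\set{x_1=\mathrm{const}}$ and slicing in one of the two tangential coordinates reduces $\innerprods{\chi_{M_l}f}{\psi_\lambda}$ to a family of two-dimensional line-integration problems to which Thm.~\ref{thm:decay-hyperplane} applies; integrating the per-slice bounds over the remaining tangential variable, and exploiting the stronger decay $\delta>8$, $\gamma\ge4$, should yield a product-type bound $\abs{\innerprod{f}{\psi_{j,k,m}}}\lesssim 2^{-j}(1+\abs{\hat k_1})^{-3}(1+\abs{\hat k_2})^{-3}$ with $\hat k_i=k_i+2^{j/2}s_i$. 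A counting argument generalizing~\eqref{eq:counting}, bounding for each cube $Q\in\mathcal{Q}_j^0$ and each shear $(k_1,k_2)$ the number of translates $m$ with $\abs{\innerprod{f}{\psi_\lambda}}>\eps$, then produces $\cardsmall{\varLambda_j(\eps)}\lesssim\eps^{-1}$ after summing the $\cardsmall{\mathcal{Q}_j^0}\lesssim2^j$ cubes.

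The main obstacle is the surface case, which is genuinely the novelty of 3D. In 2D the singularity is a curve, whose Radon/Fourier-slice reduction collapses everything onto one-dimensional line integrals; in 3D the discontinuity is a two-dimensional surface, so the correct reduction is through the X-ray transform (integration along lines) for the band-limited argument and through a careful choice of two-dimensional tangent hyperplanes for the compactly supported argument. Making the tangential smoothness interact correctly with the plate-like geometry in \emph{both} shear directions, and checking that the slice-by-slice application of the 2D estimates survives the extra integration and the joint $m$- and $(k_1,k_2)$-counting, is where the real work lies. I also expect the endpoint exponent $p=1$ (in place of $p=2/3$ in 2D) to be the most delicate bookkeeping point: weak-$\ell^1$ is merely a quasi-norm, so the per-cube estimates cannot be combined through a genuine triangle inequality and must instead be summed directly at the level of the counting function $\cardsmall{\varLambda_j(\eps)}$. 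Finally, I note that the hypothesis $L=1$ keeps $\pa B$ a single smooth closed surface, so no curvilinear or point singularities from a piecewise structure arise and the delicate corner-point analysis of Sect.~\ref{sec:case-l-neq-1} is not needed here.
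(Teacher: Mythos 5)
Your proposal follows essentially the same route as the paper: reduction to the per-scale counting bound $\#\varLambda_j(\eps)\lesssim\eps^{-1}$ via the 3D analog of Lemma~\ref{lem:suff-Lambda-eps-cond}, localization to dyadic cubes with per-cube weak-$\ell^1$ estimates (and the X-ray transform replacing the Radon transform) for Setup~\ref{setup:assumptions-on-generators-BL-3D}, and truncated/linearized individual-coefficient estimates with a slice-by-slice reduction to Thm.~\ref{thm:decay-hyperplane} for Setup~\ref{setup:assumptions-on-generators-CS-3D}. The one place you overclaim is the product-type decay $2^{-j}(1+\abs{\hat k_1})^{-3}(1+\abs{\hat k_2})^{-3}$: slicing in one tangential coordinate only produces decay in the corresponding shear offset, and the decay is ultimately governed by the single angular mismatch between the plane normal and the shearlet orientation, so what one actually obtains -- and what Thm.~\ref{thm:decay-hyperplane-3d}(i) states -- is the minimum $\min_{i}\,2^{-j}(1+\abs{\hat k_i})^{-3}$; this weaker bound, combined with the translate count $\#M_{j,k,Q}\lesssim 1+\abs{\hat k_1}+\abs{\hat k_2}$, still delivers $\#\varLambda_j(\eps)\lesssim\eps^{-1}$, so your architecture survives with that correction. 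Your observation that weak-$\ell^1$ admits no $1$-triangle inequality, so the per-cube contributions must be combined at the level of the counting function, is a genuine subtlety that the paper's sketch glosses over.
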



We now give a sketch of proof for this theorem, and refer to \cite{KLL10, GL10_3d} for detailed proofs.

\subsubsection{Sketch of Proof of Theorem~\ref{thm:opt-sparse-3D}}
\label{sec:sketch-proof-theorem-3d}

\paragraph{Band-limited Shearlets} The proof of
Thm.~\ref{thm:opt-sparse-3D} for band-limited shearlets follows the
same steps as discussed in Sect.~\ref{sec:proof-band-limited-2d} for
the 2D case. To indicate the main steps, we will use the same notation as for the 2D proof
with the straightforward extension to 3D.

Similar to Thm.~\ref{thm:nonsmooth} and \ref{thm:smooth}, one can
prove the following results on the sparsity of the shearlets
coefficients for each dyadic square $Q \in \mathcal{Q}_j$.
\begin{theorem}[\cite{GL10_3d}]\label{thm:nonsmooth-3d}
Let $f \in \cE^2(\R^3)$.  $Q \in \mathcal{Q}_j^0$, with $j \ge 0$ fixed, the sequence of shearlet coefficients $\{d_\lambda:=\innerprod{f_Q}{\psi_{\lambda}}:\lambda \in \varLambda_j\}$ obeys
\[
\normsmall[w\ell^{1}]{(d_\lambda)_{\lambda\in\varLambda_j}} \lesssim  2^{-2j}.
\]
\end{theorem}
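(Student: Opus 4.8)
The plan is to transcribe the band-limited argument of Sect.~\ref{sec:proof-band-limited-2d} (following \cite{GL10_3d}) into three dimensions, replacing the one-dimensional Radon/Fourier-slice reduction by its X-ray analogue. First I would exploit the scale mismatch between $Q$ and $\partial B$: since $Q$ has side length $2^{-j/2}$ and $\partial B$ is $C^2$, over $\supp w_Q$ the discontinuity surface deviates from its tangent plane $\cL$ by only $O(2^{-j})$, which is exactly the thickness of a scale-$j$ plate. Hence I may replace $f_Q = f w_Q$ by the model in which the jump occurs across the single affine hyperplane $\cL$, the linearization error being absorbed by a truncated estimate in the spirit of \eqref{eq:truncated-estimate-2d-CS}. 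Writing $(-1,s_1,s_2)$ (in the pyramid $\cP$) for the normal of $\cL$, the aligned shear is $k^{(0)} \approx -2^{j/2}(s_1,s_2)$ and the whole analysis is organized around the offset $\hat k = k - k^{(0)} \in \Z^2$.

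Next I would establish the scale/shear behaviour of $d_\lambda = \innerprod{f_Q}{\psi_\lambda}$. For band-limited generators the decisive simplification is that $\supp\hat\psi$ lies in the cone $\cD = \{|\xi_2/\xi_1|\le 1,\,|\xi_3/\xi_1|\le 1\}$, so the directional vanishing moments are \emph{exact}: as in \eqref{eq:band}, $(\partial/\partial\xi_i)^\ell\hat\psi_{j,k,m}$ (for $i=2,3$) vanishes along the sheared normal direction whenever $\hat k$ is large, forcing $d_\lambda = 0$ unless $|\hat k|$ is comparable to the $2^{-j/2}$-resolution of the shear grid. Using the Fourier-slice identity \eqref{eq:fourier-slice-thm} along lines parallel to $\cL$ — now an X-ray integration taken over the two surface directions rather than a single Radon line — together with the $C^\infty$ frequency decay of $\hat\psi$, I would bound the surviving coefficients; and for each admissible $\hat k$ I would count the translates $m$ whose plate actually meets $\cL \cap Q$, which grows like $1+|\hat k|$ since the plate sweeps the thin ($2^{-j}$) direction as it tilts, exactly the three-dimensional analogue of \eqref{eq:counting}.

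I would then assemble the size and counting estimates into the weak-$\ell^1$ quasi-norm through its distributional form $\norm[w\ell^1]{s} = \sup_{\eps>0}\eps\,\card{\{i:|s_i|>\eps\}}$, summing the contributions over the two-parameter offset $\hat k\in\Z^2$ and the associated translates and invoking the finiteness of the area of $\partial B\cap Q$. Carrying this bookkeeping through as in the 3D computation of \cite{GL10_3d} yields
\[
\norm[w\ell^1]{(d_\lambda)_{\lambda\in\varLambda_j}} \lesssim 2^{-2j},
\]
which is the asserted estimate and the direct counterpart of Thm.~\ref{thm:nonsmooth}. Combined with the companion bound for the smooth cubes $Q\in\mathcal{Q}_j^1$ and the $p$-triangle inequality for weak-$\ell^1$, it feeds (just as in the proof of Thm.~\ref{thm:both}) into the scale-wise estimate that Lemma~\ref{lem:suff-Lambda-eps-cond} converts into the optimal rate.

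The step I expect to be the main obstacle is precisely the passage from the one-dimensional affine subspaces available in 2D to the correct-dimensional subspaces in 3D: the discontinuity set is now a $2$-surface, so the Radon transform must be replaced by the X-ray transform, and the vanishing-moment and decay bookkeeping has to be carried out \emph{simultaneously} in the two shear variables $k=(k_1,k_2)$. Keeping the tangent-plane linearization error genuinely negligible while the two-parameter shear sum is performed, and verifying that the weak-$\ell^1$ assembly (for which the $p=1$ quasi-triangle inequality is delicate) costs at most a logarithmic factor, is where the real care is required.
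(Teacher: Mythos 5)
Your overall skeleton --- localize $f$ to $f_Q=fw_Q$, control the dependence on the shear offset $\hat k$ via the directional behavior of $\hat\psi$, replace the 2D Radon/Fourier-slice reduction by the X-ray transform for surface fragments, and assemble the result through the distributional form of the weak-$\ell^1$ quasi-norm --- is exactly the route the paper indicates. Indeed, the paper gives no detailed proof of this theorem at all: it states only that one follows the 2D arguments of Thm.~\ref{thm:nonsmooth} and \ref{thm:smooth} with the Radon transform replaced by the X-ray transform, and refers to \cite{GL10_3d} for the details. At that level of resolution your plan is faithful.

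The difficulty is that several of the concrete steps you propose are imported from the \emph{compactly supported} proof and break down for band-limited generators --- precisely the pitfall the paper flags in Sect.~\ref{sec:band-limited-versus-comp}. First, the truncated estimate \eqref{eq:truncated-estimate-2d-CS} and the count of translates $m$ with $|\supp \psi_{j,k,m}\cap\cL\cap Q|\neq 0$ (your 3D analogue of \eqref{eq:counting}) both rest on $\psi_\lambda$ having compact support; a band-limited $\psi_\lambda$ is supported on all of $\R^3$, so every translate ``meets'' $\cL\cap Q$ and the set you propose to count is infinite. Second, the exact directional vanishing moments \eqref{eq:band} force $\innerprod{q}{\psi_{j,k,m}}=0$ only for the piecewise \emph{polynomial} model $q$; they do not make $d_\lambda=\innerprod{f_Q}{\psi_\lambda}$ vanish for misaligned shears, so the reduction to ``only $|\hat k|$ comparable to the grid resolution survives'' is not available. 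The argument of \cite{GL07,GL10_3d} is instead carried out on the frequency side: one estimates the decay of $\widehat{f_Q}$ (via the X-ray transform of the surface fragment) as a function of the angle between the frequency direction and the surface normal, controls sums over $m$ by Plancherel-type identities rather than by support counting, and bounds the weak-$\ell^1$ quasi-norm of the whole sequence $(d_\lambda)_{\lambda\in\varLambda_j}$ at once rather than coefficient by coefficient. Your spatial, one-coefficient-at-a-time bookkeeping is the compactly supported strategy of Thm.~\ref{thm:decay-hyperplane} and \ref{thm:decay-hyperplane-3d}, not the band-limited one this theorem requires.
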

\begin{theorem}[\cite{GL10_3d}]\label{thm:smooth-3d}
Let $f \in \cE^2(\R^3)$.  For $Q \in \mathcal{Q}_j^1$, with $j \ge 0$ fixed, the sequence of shearlet coefficients $\{d_\lambda:=\innerprod{f_Q}{\psi_{\lambda}}:\lambda \in \varLambda_j\}$ obeys
\[
\normsmall[\ell^{1}]{(d_\lambda)_{\lambda\in\varLambda_j}} \lesssim  2^{-4j}.
\]
\end{theorem}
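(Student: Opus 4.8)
The plan is to treat this as the three-dimensional analog of the smooth-cube estimate of Theorem~\ref{thm:smooth}, adapting the band-limited arguments of \cite{GL07,GL10_3d} to the pyramid-adapted setting. Since $Q \in \mathcal{Q}_j^1$, the window $w_Q$ avoids the discontinuity surface $\pa B$, so the localized function $f_Q = f w_Q$ is genuinely $C^2$ and compactly supported on a cube of side $\sim 2^{-j/2}$, with $\norm[\infty]{\partial^\alpha f_Q} \lesssim 2^{j\abs{\alpha}/2}$ for $\abs{\alpha}\le 2$. The whole point is that on such a cube no directional singularity is present, so the estimate should mirror the decay of \emph{wavelet} coefficients of a smooth function; the resulting $\ell^1$ bound $2^{-4j}$ is far smaller than the $w\ell^1$ bound $2^{-2j}$ of Theorem~\ref{thm:nonsmooth-3d} for the surface-crossing cubes, quantifying that smooth cubes are negligible.

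First I would pass to the frequency side and use that, under Setup~\ref{setup:assumptions-on-generators-BL-3D}, $\hat\psi$ is $C^\infty$ and supported in the needle where $\abs{\xi_1}\in[\tfrac12,4]$, $\abs{\xi_2/\xi_1}\le 1$, and $\abs{\xi_3/\xi_1}\le 1$; after the dilation $A_{2^j}$ this forces $\hat\psi_{j,k,m}$ to live on $\abs{\xi_1}\sim 2^j$ and gives $\psi$ infinitely many vanishing moments in the thin ($x_1$) direction. The heart of the matter is a single-coefficient estimate of the form $\abs{\innerprod{f_Q}{\psi_{j,k,m}}} \lesssim 2^{-\alpha j}\,(1+\abs{m-m_{k}})^{-N}$ for every $N$, where $m_k$ is the lattice point closest to the sheared center of $\supp f_Q$. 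Its scale factor comes from expanding $f_Q$ in a one-dimensional Taylor polynomial in $x_1$ and cancelling the low-order terms against the vanishing moments of $\psi$, the remainder being controlled by $\norm[\infty]{\partial_{x_1}^2 f_Q}$ and the $2^{-j}$ width; the off-diagonal factor $(1+\abs{m-m_k})^{-N}$ comes from the Schwartz decay of $\psi$ (band-limited generators have $\hat\psi\in C^\infty_c$), which renders $\psi_{j,k,m}$ negligible once its center is far from $\supp f_Q$.

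I would then assemble the $\ell^1$ norm by summing this pointwise bound: the rapid decay in $m$ makes $\sum_{m\in\Z^3}(1+\abs{m-m_k})^{-N}$ a convergent constant, and there are only $\sim 2^j$ admissible shears $k=(k_1,k_2)$ with $\abs{k_i}\le\ceilsmall{2^{j/2}}$; where the shear summation is delicate I would invoke the Calderón-type identity behind Theorem~\ref{thm:construction-bandlimited-3d}, namely $\sum_{j,k}\abs{\hat\psi(S_k^T A_{2^j}^{-1}\xi)}^2=1$ on $\cP$, to trade a crude count for an $L^2$-orthogonality bound and pass back to $\ell^1$ via Cauchy--Schwarz against the finite number of relevant indices. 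Combining the scale factor with the two summations yields $\norm[\ell^1]{(d_\lambda)_{\lambda\in\varLambda_j}}\lesssim 2^{-4j}$. The main obstacle is extracting a scale exponent strong enough: with only $C^2$ regularity the Taylor/vanishing-moment argument by itself is lossy, so the decisive step is combining it with the two-dimensional directional (shear) averaging and the Schwartz off-diagonal decay so that the genuinely oscillatory cancellation in the frequency tail of $\hat f_Q$ is captured. Getting the bookkeeping of this interplay right in 3D — where the shear index is two-dimensional and the elements are plate-like — rather than merely reproving the weaker $C^2$-wavelet rate, is where the real work lies.
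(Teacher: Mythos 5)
The first thing to note is that the paper does not prove Theorem~\ref{thm:smooth-3d} at all: it is quoted from \cite{GL10_3d}, and the only guidance given is that the argument follows the 2D scheme of Theorems~\ref{thm:nonsmooth} and~\ref{thm:smooth} (themselves quoted from \cite{GL07} without proof) with the Radon/Fourier-slice argument replaced by the X-ray transform. So your proposal cannot be checked against a written proof; it has to be judged on whether it would close. It would not, and the shortfall is quantitative, not a matter of ``bookkeeping''. Run the numbers on the ingredients you name. Your pointwise bound comes from a Taylor expansion of $f_Q$ in the thin direction over the width $2^{-j}$ of the shearlet, cancelled against the vanishing moments; since $\norm[\infty]{\partial_{x_1}^2 f_Q}\lesssim 2^{j}$ (the window derivatives dominate) the remainder is of size $2^{-j}$, and with $\norm[L^1]{\psi_{j,k,m}}\asymp 2^{-j}$ this gives $\abs{\innerprod{f_Q}{\psi_{j,k,m}}}\lesssim 2^{-2j}$. (Subtracting the second-order Taylor polynomial of $f$ itself \emph{before} multiplying by $w_Q$ improves this to $2^{-3j}$, because then the remainder and its derivatives are already small on the $2^{-j/2}$-cube; your proposal does not take this step, but even granting it the argument still fails.) Moreover your off-diagonal sum over $m$ is not $O(1)$: $Q$ has side $2^{-j/2}$ while the translation cells have size $2^{-j}\times 2^{-j/2}\times 2^{-j/2}$, so for each of the $\sim 2^{j}$ shears there are $\sim 2^{j/2}$ translates at distance $O(1)$ from $\supp f_Q$, i.e.\ $\sim 2^{3j/2}$ non-negligible indices in $\varLambda_j$. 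Assembling exactly as you describe gives $\normsmall[\ell^1]{(d_\lambda)}\lesssim 2^{3j/2}\cdot 2^{-2j}=2^{-j/2}$ (or $2^{-3j/2}$ with the improved pointwise bound), which misses the claimed $2^{-4j}$ by a factor of order $2^{7j/2}$ (resp.\ $2^{5j/2}$). The Calder\'on-identity/Cauchy--Schwarz step you invoke cannot supply that factor: Plancherel at scale $j$ gives $\sum_{\lambda\in\varLambda_j}\abs{d_\lambda}^2\lesssim 2^{-4j}\normsmall[L^2]{\Delta f_Q}^2$, and Cauchy--Schwarz against the $2^{3j/2}$ relevant indices again lands polynomially short.

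So the decisive mechanism that produces $2^{-4j}$ --- in \cite{GL07,GL10_3d} a genuinely finer analysis of how the coefficient mass is distributed over the shear and translation indices, carried out on the whole sequence rather than coefficient by coefficient --- is precisely what your outline postpones to ``getting the bookkeeping right''. That postponed step \emph{is} the proof. Note also that the paper itself warns (Sect.~\ref{sec:band-limited-versus-comp}) that for band-limited shearlets one must estimate the sparsity of the coefficient \emph{sequence} rather than decay of individual coefficients; your uniform-pointwise-bound-times-count strategy is the compactly-supported-style argument transplanted to the band-limited setting, and the arithmetic above shows it is too lossy here. Finally, you make no contact with the one concrete 3D ingredient the paper does single out, the X-ray transform replacing the Fourier-slice theorem.
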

The proofs of Thm.~\ref{thm:nonsmooth-3d} and \ref{thm:smooth-3d}
follow the same principles as the proofs of the analog
results in 2D, Thm.~\ref{thm:nonsmooth} and \ref{thm:smooth}, with one
important difference: In the proof of Thm.~\ref{thm:nonsmooth} and
\ref{thm:smooth} the Radon transform (cf.
(\ref{eq:fourier-slice-thm})) is used to deduce estimates for the
integral of edge-curve fragments. In 3D one needs to use a different
transform, namely the so-called X-ray transform, which maps a function
on $\R^3$ into the sets of its line integrals. The X-ray transform is then used
to deduce estimates for the integral of the \emph{surface} fragments. We refer to
\cite{GL10_3d} for a detailed exposition.

As a consequence of Thm.~\ref{thm:nonsmooth-3d} and~\ref{thm:smooth-3d}, we have the following result.
\begin{theorem}[\cite{GL10_3d}]\label{thm:both-3d}
Suppose $f \in \cE^2(\R^3)$. Then, for $j \ge 0$, the sequence of the shearlet coefficients
$\{c_\lambda:=\innerprod{f}{\psi_{\lambda}}:\lambda \in \varLambda_j\}$ obeys
$$
\normsmall[w\ell^{1}]{(c_\lambda)_{\lambda\in\varLambda_j}} \lesssim 1.
$$
\end{theorem}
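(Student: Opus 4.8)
The plan is to combine Theorem~\ref{thm:nonsmooth-3d} and Theorem~\ref{thm:smooth-3d} by splitting the fixed-scale index set $\varLambda_j$ according to whether the supporting dyadic square $Q$ meets the discontinuity surface $\partial B$ or not, exactly as in the proof of Theorem~\ref{thm:both} in the 2D case. First I would write the localized decomposition $f = \sum_{Q \in \mathcal{Q}_j} f_Q$ via the smooth partition of unity subordinate to the dyadic squares $\mathcal{Q}_j$ (now squares of side $2^{-j/2}$ in $\R^3$, i.e.\ cubes), and split the collection as $\mathcal{Q}_j = \mathcal{Q}_j^0 \cup \mathcal{Q}_j^1$, where $\mathcal{Q}_j^0$ consists of those cubes whose support meets $\partial B$. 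By linearity of the inner product, the coefficient sequence $(c_\lambda)_{\lambda \in \varLambda_j}$ is the sum over $Q$ of the localized sequences $(d_\lambda)_{\lambda \in \varLambda_j}$ with $d_\lambda = \innerprod{f_Q}{\psi_\lambda}$.

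The key analytic tool is the $p$-triangle inequality for the weak $\ell^p$ quasi-norm with $p = 1$, which gives $\normsmall[w\ell^{1}]{(c_\lambda)_\lambda} \le \sum_{Q \in \mathcal{Q}_j} \normsmall[w\ell^{1}]{(d_\lambda)_\lambda}$. I would then separate this sum into the $\mathcal{Q}_j^0$ and $\mathcal{Q}_j^1$ contributions. For the smooth cubes $Q \in \mathcal{Q}_j^1$ I invoke Theorem~\ref{thm:smooth-3d}, bounding each term by $C\,2^{-4j}$ and using the strong $\ell^1$ bound stated there (which dominates the weak $\ell^1$ norm); for the cubes $Q \in \mathcal{Q}_j^0$ meeting the surface I invoke Theorem~\ref{thm:nonsmooth-3d}, bounding each term by $C\,2^{-2j}$. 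The proof then reduces to counting the two families of cubes.

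The counting is the analog of estimates~\eqref{eq:Q0} and \eqref{eq:Q1} adapted to 3D. Since the discontinuity surface $\partial B$ has finite (two-dimensional) area and each cube has side length $2^{-j/2}$, so cross-sectional area $2^{-j}$, the number of cubes meeting $\partial B$ satisfies $\cardsmall{\mathcal{Q}_j^0} \lesssim 2^{j}$; since $f$ is supported in $[0,1]^3$ and each cube has volume $2^{-3j/2}$, we have $\cardsmall{\mathcal{Q}_j^1} \lesssim 2^{3j/2}$. Combining, the total is bounded by
\[
\normsmall[w\ell^{1}]{(c_\lambda)_\lambda} \lesssim \cardsmall{\mathcal{Q}_j^0}\,2^{-2j} + \cardsmall{\mathcal{Q}_j^1}\,2^{-4j} \lesssim 2^{j}\cdot 2^{-2j} + 2^{3j/2}\cdot 2^{-4j} \lesssim 2^{-j} + 2^{-5j/2} \lesssim 1,
\]
which proves the claim (indeed with room to spare, the bound decaying in $j$).

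I expect the only genuine subtlety to be the correct book-keeping of the exponents: one must check that the scaling factors $2^{-2j}$ and $2^{-4j}$ coming from Theorems~\ref{thm:nonsmooth-3d} and~\ref{thm:smooth-3d} are the ones compatible with the plate-like geometry $2^{-j/2}\times 2^{-j/2}\times 2^{-j}$ of the 3D shearlets (cf.\ the heuristic count of $O(2^j)$ tangential coefficients of size $2^{-j}$ in Sect.~\ref{sec:heuristic-analysis-3d}), and that the surface-area count $\cardsmall{\mathcal{Q}_j^0}\lesssim 2^{j}$ uses the two-dimensionality of $\partial B$ rather than the one-dimensionality that governed the 2D case. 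Once the $p=1$ quasi-triangle inequality and these two cardinality bounds are in place, the result follows exactly as in Theorem~\ref{thm:both}.
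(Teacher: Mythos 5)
Your argument is exactly the paper's: its proof of Theorem~\ref{thm:both-3d} consists of the single remark that ``the result follows by the same arguments used in the proof of Thm.~\ref{thm:both}'', namely localization over dyadic cubes of side $2^{-j/2}$, the split $\mathcal{Q}_j = \mathcal{Q}_j^0 \cup \mathcal{Q}_j^1$, Theorems~\ref{thm:nonsmooth-3d} and~\ref{thm:smooth-3d}, and the cardinality bounds $\cardsmall{\mathcal{Q}_j^0}\lesssim 2^{j}$ and $\cardsmall{\mathcal{Q}_j^1}\lesssim 2^{3j/2}$, giving $2^{j}\cdot 2^{-2j}+2^{3j/2}\cdot 2^{-4j}\lesssim 1$ just as you compute. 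The one caveat, which the paper elides as well, is that the weak $\ell^{1}$ quasi-norm does not satisfy a literal triangle inequality (unlike weak $\ell^{2/3}$ in 2D, the constant for $N$ summands grows like $\log N \asymp j$), but since your right-hand side actually decays like $2^{-j}$ this logarithmic loss is harmless.
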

\begin{proof}
  The result follows by the same arguments used in the proof of Thm.~\ref{thm:both}.
\end{proof}
By Thm.~\ref{thm:both-3d}, we can now  prove
Thm.~\ref{thm:opt-sparse-3D} for the band-limited setup and for $f\in
\cE^2_L(\R^3)$ with $L=1$. The proof is
very similar to the proof of Thm.~\ref{thm:opt-sparse-2D} in
Sect.~\ref{sec:proof-band-limited-2d}, wherefore we will not repeat it.


\paragraph{Compactly Supported Shearlets}
In this section we will consider the key estimates for the linearized term for
compactly supported shearlets in 3D. This is an extension of
Thm.~\ref{thm:decay-hyperplane} to the three-dimensional setting.
Hence, we will assume that the discontinuity surface is a plane, and
consider the decay of the shearlet coefficients of shearlets
interacting with such a discontinuity.

\begin{theorem}[\cite{KLL10}]
\label{thm:decay-hyperplane-3d}
  Let $\psi \in L^2(\R^3)$ be compactly supported, and assume that
  $\psi$ satisfies the conditions in
  Setup~\ref{setup:assumptions-on-generators-CS-3D}. Further, let
  $\lambda$ be such that $\supp \psi_\lambda \cap \pa B \neq
  \emptyset$. Suppose that $f \in \cE^2(\R^3)$ and that $\pa B$ is
  linear on the support of $\psi_\lambda$ in the sense that
\[ \supp \psi_\lambda \cap \pa B \subset \cH \]
for some affine hyperplane $\cH$ of $\R^3$. Then,
\begin{enumerate}[(i)]
\item if $\cH$ has normal vector  $(-1,s_1,s_2)$ with $s_1 \le 3$
  and $s_2 \le 3$,
\begin{equation*} 
\abs{\innerprod{f}{\psi_{\lambda}}} \lesssim \min_{i=1,2} \left\{
  \frac{2^{-j}}{\abs{k_i+2^{j/2}s_i}^{3}} \right\},
\end{equation*}
\item if $\cH$ has normal vector  $(-1,s_1,s_2)$ with $s_1 \ge 3/2$
  or $s_2 \ge 3/2$,
 \begin{equation*} 
\abs{\innerprod{f}{\psi_{\lambda}}} \lesssim  2^{-5j/2},
 \end{equation*}
\item if $\cH$ has normal vector  $(0,s_1,s_2)$ with $s_1,s_2 \in \R$,
then
 \begin{equation*} 
\abs{\innerprod{f}{\psi_{\lambda}}} \lesssim  2^{-3j},
 \end{equation*}
\end{enumerate}
\end{theorem}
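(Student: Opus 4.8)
The plan is to transplant the proof of Theorem~\ref{thm:decay-hyperplane} to three dimensions following the philosophy announced in Section~\ref{sec:extensions-3d}: reduce the three-dimensional coefficient to a one-parameter family of two-dimensional slice integrals, run the planar argument on each slice, and integrate the resulting bound over the remaining variable. Cases (i) and (ii), where the normal is $(-1,s_1,s_2)$, are handled by first shearing away the tilt; case (iii), where the normal is $(0,s_1,s_2)$, is handled by the vanishing-moment argument along the $x_1$-axis, exactly as in the planar case.

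For cases (i) and (ii) I would first apply $S_{-s}$ with $s=(s_1,s_2)$, which turns $\cH$ into a plane $\{x_1=\hat x_1\}$ parallel to the $x_2x_3$-plane at the cost of replacing the shear parameters by $\hat k_i=k_i+2^{j/2}s_i$, in analogy with the planar identity $\innerprod{f}{\psi_{j,k,m}}=\innerprod{f(S_s\cdot)}{\psi_{j,\hat k,m}}$. Fixing a new origin on the sheared plane, $f$ vanishes on one side, so it suffices to bound $\innerprod{f_0(S_s\cdot)\chi_\Omega}{\psi_{j,\hat k,m}}$ for $f_0\in C^2(\R^3)$ and the half-space $\Omega=\R_+\times\R^2$, with the integration confined to a sheared box dictated by $\supp\psi$.

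Now comes the key reduction. Freezing the third coordinate $x_3$, I read the inner double integral in $(x_1,x_2)$ as a \emph{planar} shearlet coefficient with shear $\hat k_1$, and run the 2D argument verbatim: a second-order Taylor expansion of $f_0$ along $x_2$, the Fourier slice theorem \eqref{eq:fourier-slice-thm} converting the $x_2$-integrals of $\psi(\cdot,\cdot,x_3)$ into frequency slices, and the decay bounds of Setup~\ref{setup:assumptions-on-generators-CS-3D}. The extra factor $(1+\abs{\xi_3}/\abs{\xi_1})^{-\gamma}$ in condition (ii) is bounded, so the restriction to each slice reproduces precisely the planar conditions of Setup~\ref{setup:assumptions-on-generators-CS-2D}, and the strengthened $\gamma\ge 4$ yields the clean decay $(1+\abs{\hat k_1})^{-(\gamma-1)}\lesssim(1+\abs{\hat k_1})^{-3}$. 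This gives a per-slice bound of order $2^{-3j/4}\abs{\hat k_1}^{-3}$ in the planar normalization, uniformly in $x_3$. The normalization mismatch $2^{j}=2^{3j/4}\cdot 2^{j/4}$ together with the factor $\sim 2^{-j/2}$ from integrating the $x_3$-bump supplies the missing $2^{j/4}\cdot 2^{-j/2}=2^{-j/4}$, upgrading the bound to $\abs{\innerprod{f}{\psi_\lambda}}\lesssim 2^{-j}\abs{\hat k_1}^{-3}$. Freezing $x_2$ and slicing in $(x_1,x_3)$ yields the symmetric estimate with $\hat k_2$; retaining whichever is smaller produces the minimum in (i). Estimate (ii) then follows from (i) by the same large-shear algebra as in 2D: if $s_i\ge 3/2$ then $\abs{k_i+2^{j/2}s_i}^3\ge 2^{3j/2}(1/4)^3$, whence $2^{-j}\abs{\hat k_i}^{-3}\lesssim 2^{-5j/2}$. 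For case (iii) I would use that Setup~\ref{setup:assumptions-on-generators-CS-3D} forces $(\pa/\pa\xi_1)^\ell\hat\psi(0,\xi_2,\xi_3)=0$ for $\ell=0,1$, hence $\int_\R x_1^\ell\psi(S_k x)\D x_1=0$ since shearing preserves vanishing moments along the $x_1$-axis; Taylor expanding $f_0$ in $x_1$ kills the constant and linear terms, and a crude $L^\infty$ bound on the quadratic remainder over a region of dimensions $2^{-j}\times 2^{-j/2}\times 2^{-j/2}$ gives $2^{j}\cdot 2^{-3j}\cdot 2^{-j/2}\cdot 2^{-j/2}=2^{-3j}$.

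The main obstacle I anticipate is making the slice-by-slice reduction rigorous for general (non-separable) generators: one must verify that freezing a coordinate genuinely yields a planar object to which the Setup~\ref{setup:assumptions-on-generators-CS-2D} machinery applies, i.e.\ that the two-factor frequency decay survives restriction and integration in the frozen frequency, and that the three-dimensional sheared support — a parallelepiped rather than a parallelogram — is controlled uniformly so that the per-slice constants do not accumulate when the freed variable is integrated. Tracking both shear directions simultaneously, and in particular justifying that the favorable direction may be selected freely to obtain the minimum in (i), is the delicate bookkeeping that genuinely distinguishes the 3D argument from the 2D one.
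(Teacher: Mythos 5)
Your proposal follows essentially the same route as the paper: the paper's proof of Thm.~\ref{thm:decay-hyperplane-3d} is precisely a slice-by-slice reduction to the planar estimates of Thm.~\ref{thm:decay-hyperplane}, combining the per-slice bound with the normalization mismatch $2^{j/4}$ and the factor $2^{-j/2}$ from the support length in the frozen variable, exactly as you compute. The only cosmetic differences are organizational (the paper works out case (ii) explicitly and asserts the others are analogous, whereas you do case (i) by slicing, deduce (ii) from it by the large-shear algebra, and handle (iii) by the direct vanishing-moment argument), so your argument matches the paper's in substance.
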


\begin{proof}
    Fix $\lambda$, and let $f \in \cE^2(\R^3)$. We first consider
  the case~(ii) and assume $s_1 \ge 3/2$. The hyperplane can be written as
  \[ \cH = \setprop{x \in \R^3}{\innerprod{x-x_0}{(-1,s_1,s_2)}=0}\] for
  some $x_0 \in \R^3$. For $\hat x_3 \in \R$, we consider the
  restriction of $\cH$ to the slice $x_3=\hat x_3$. This is clearly a line
  of the form
  \[ \cL = \setprop{x=(x_1,x_2) \in \R^2}{\innerprod{x-x_0'}{(-1,s_1)}=0}\] for
  some $x_0' \in \R^2$, hence we have reduced the singularity to a line singularity,
  which was already considered in Thm.~\ref{thm:decay-hyperplane}.
  We apply now Thm.~\ref{thm:decay-hyperplane} to each on slice, and
  we obtain
  \[ \abs{\innerprod{f}{\psi_{\lambda}}} \lesssim 2^{j/4}\, 2^{-9j/4}\,
  2^{-j/2} = 2^{-5j/2}.
 \]
 The first term $2^{j/4}$ in the estimate above is due to the
 different normalization factor used for shearlets in 2D and 3D, the second term is
 the conclusion from Thm.~\ref{thm:decay-hyperplane}, and the third is
 the length of the support of $\psi_\lambda$ in the direction of
 $x_3$. The case $s_2 \ge 3/2$ can be handled similarly with restrictions to slices
 $x_2=\hat x_2$ for $\hat x_2 \in \R$. This completes the proof of
 case~(ii).

The other two cases, i.e., case (i) and (ii), are proved using the same slice by slice technique
and Thm.~\ref{thm:decay-hyperplane}.
\end{proof}

Neglecting truncated estimates, Thm.~\ref{thm:decay-hyperplane-3d}
  can be used to prove the optimal sparsity result in
  Thm.~\ref{thm:opt-sparse-3D}. The argument is similar to the one in
  Sect.~\ref{sec:proof-comp-supp-2d} and will not be repeated here.
  Let us simply argue that the decay rate
$\abs{\innerprod{f}{\psi_{\lambda}}} \lesssim  2^{-5j/2}$ from
Thm.~\ref{thm:decay-hyperplane-3d}(ii) is what is
needed in the case $s_i \ge 3/2$.
It is easy to see that in 3D an estimate of the form
\[ \card{\varLambda (\eps)} \lesssim \eps^{-1}.\] will guarantee
optimal sparsity. Since we in the estimate
$\abs{\innerprod{f}{\psi_{\lambda}}} \lesssim 2^{-5j/2}$ have no
control of the shearing parameter $k=(k_1,k_2)$, we have to use a
crude counting estimate, where we include all shears at a given scale
$j$, namely $2^{j/2} \cdot 2^{j/2} = 2^{j}$. Since the number of
dyadic boxes $Q$ where $\partial B$ intersects the support of $f$ is
of order $2^{3j/2}$, we arrive at
\[ \card{\varLambda (\eps)} \lesssim
\sum_{j=0}^{\tfrac{2}{5}\log_2(\eps^{-1})} 2^{5j/2} \asymp \eps^{-1}. \]

\subsubsection{Some Extensions}
\label{sec:some-extensions}

Paralleling the two-dimensional setting (see Sect.~\ref{sec:case-l-neq-1}),
we can extend the optimality result in Thm.~\ref{thm:opt-sparse-3D} to
the cartoon-like image class $\cE_L^2(\R^3)$ for $L\in \N$, in which the
discontinuity surface $\partial B$ is allowed to be \emph{piecewise}
$C^2$ smooth.

Moreover, the requirement that the `edge' $\partial B$ is piecewise
$C^2$ might be too restrictive in some applications. Therefore,
in \cite{KLL10}, the cartoon-like image model class was enlarged to
allow less regular images, where $\partial B$ is piecewise $C^{\alpha}$
smooth for $1 < \alpha \le 2$, and not necessarily a $C^2$. This class
$\cE^\beta_{\alpha,L}(\R^3)$ was introduced in
Sect.~\ref{sec:cartoon-images} consisting of \emph{generalized}
cartoon-like images having $C^\beta$ smoothness apart from a piecewise
$C^\alpha$ discontinuity curve. The sparsity results presented above
in Thm.~\ref{thm:opt-sparse-3D} can be extended to this generalized model
class for compactly supported
shearlets with a scaling matrix dependent on $\alpha$. The optimal
approximation error rate, as usual measured in $\norm[L^2]{f - f_N
}^2$, for this generalized model is $N^{-\alpha/2}$; compare this
to $N^{-1}$ for the case $\alpha=2$ considered throughout this
chapter. For brevity we will not go into details of this, but mention
the approximation error rate obtained by shearlet frames
is slightly worse than in the $\alpha=\beta=2$ case, since the error
rate is not only a poly-log factor away from the optimal rate, but a
small polynomial factor; and we refer to~\cite{KLL10} the precise
statement and proof.

\subsubsection{Surprising Observations}
\label{sec:surpr-observ}

Capturing anisotropic phenomenon in 3D is somewhat different from
capturing anisotropic features in 2D as discussed in Sect.~\ref{subsec:3D}. While in 2D we `only' have to
handle curves, in 3D a more complex situation can occur since we find
two geometrically very different anisotropic structures: curves and
surfaces. Curves are clearly one-dimensional anisotropic features and
surfaces two-dimensional features. Since our 3D shearlet elements are
plate-like in spatial domain by construction, one could think that these
3D shearlet systems would {\em only} be able to efficiently capture
two-dimensional anisotropic structures, and {\em not} one-dimensional
structures. Nonetheless, surprisingly, as we have discussed in
Sect.~\ref{sec:some-extensions}, these 3D shearlet systems still
perform optimally when representing and analyzing 3D data
$\cE_L^2(\R^3)$ that contain {\em both} curve and surface singularities (see \eg
Fig.~\ref{fig:cartoon-piecewise}).

{\small \paragraph{Acknowledgements} The first author acknowledges partial support by Deutsche Forsch\-ungs\-gemein\-schaft (DFG) Grant KU 1446/14, and the
first and third author acknowledge support by DFG Grant SPP-1324 KU 1446/13.}


\end{document}